\theoremstyle{plain}
\newtheorem{theorem}{Theorem}[section]
\newtheorem{thm}[theorem]{Theorem}
\newtheorem{lemma}[theorem]{Lemma}
\newtheorem{corollary}[theorem]{Corollary}
\newtheorem{prop}[theorem]{Proposition}
\newtheorem*{claim*}{Claim}
\newtheorem*{theorem:main}{Main Theorem}
\theoremstyle{definition}
\newtheorem{definition}[theorem]{Definition}
\newtheorem{remark}[theorem]{Remark}
\newtheorem{example}[theorem]{Example}
\newtheorem{assumption}[theorem]{Assumption}
\newcommand{\vertx}[1]{\mathcal{V}_{#1}}
\newcommand{\length}[1]{{\mathrm{Len}({#1})}}
\newcommand{\infsub}{{\scriptscriptstyle{(\infty)}}}
\newcommand{\infcohom}[2]{H^2_\infsub(#1;#2)}
\newcommand{\cusp}[1]{\mathrm{Cusp}(#1)}
\newcommand{\bd}{\partial}
\newcommand{\cbd}{\delta}
\newcommand{\R}{\mathbb R}
\newcommand{\cay}[1]{\mathrm{Cay}(#1)}
\newcommand{\chain}[1]{\langle{#1}\rangle}
\newcommand{\rarea}[2]{\mathrm{Area}_{#1}({#2})}
\newcommand{\linfn}{{\ell_\infty}}
\newcommand{\norm}[1]{{\lvert{} {#1} \rvert{}}}
\newcommand{\prel}[2]{{\mathrm{Rel}({#1},{#2})}}
\title[Cohomological characterisation of hyperbolicity]{Cohomological characterisation of hyperbolicity}
\author[Milizia]{Francesco Milizia}
\address{Dipartimento di Matematica\\ Universit\`{a} di Bologna\\ 40126~Bologna}
\email{francesco.milizia@unibo.it}
\author[Petrosyan]{Nansen Petrosyan}
\address{School of Mathematics, University of Southampton, Southampton, UK}\email{n.petrosyan@soton.ac.uk}
\author[Sisto]{Alessandro Sisto}
\address{Department of Mathematics, Heriot-Watt University, Edinburgh, UK}
	\email{a.sisto@hw.ac.uk}
\author[Vankov]{Vladimir Vankov}
\address{School of Mathematics, University of Bristol, Bristol, UK}
\email{vlad.vankov@bristol.ac.uk}
\begin{document}

\begin{abstract}
    For any geodesic metric space $X$, we give a complete cohomological characterisation of the hyperbolicity of $X$ in terms of vanishing of its second $\ell^{\infty}$-cohomology. We extend this result to the relative setting of $X$ with a collection of uniformly hyperbolic subgraphs. As an application, we give a cohomological characterisation of acylindrical hyperbolicity.
\end{abstract}

\maketitle

\section{Introduction}

Gersten proved that hyperbolic groups can be characterised, among finitely presented groups, as those with vanishing second $\ell^{\infty}$-cohomology \cite{Ger96}. Since then, many authors have characterised hyperbolic and relatively hyperbolic groups in terms of homological algebra (for example, \cite{AllcockGer}, \cite{Fra2018}, \cite{GH2009}, \cite{Min02}). A common thread amidst these is that local finiteness of corresponding spaces being acted on is a key condition. Our main result is a characterisation of hyperbolicity for general geodesic metric spaces, not just groups. The cohomology theory we use for this characterisation is built upon the ideas from \cite{Elek98}.

\begin{thm}\label{thm:intro1}
    Let $X$ be a geodesic metric space. Then,  the following are equivalent:
 \begin{enumerate}
    \item $X$ is hyperbolic;
    \item $X$  has a finite homological isoperimetric function and $\infcohom{X}{V}=0$ for all injective Banach spaces $V$;
    \item $X$  has a finite homological isoperimetric function and $\infcohom{X}{\ell^{\infty}(\mathbb N, \R)}=0$.
    \end{enumerate}
\end{thm}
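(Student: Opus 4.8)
The plan is to prove the cycle of implications $(1)\Rightarrow(2)\Rightarrow(3)\Rightarrow(1)$. It will be convenient to first replace $X$ by a combinatorial model — a Rips-type complex or a coarsely equivalent graph — which affects neither $\infcohom{\cdot}{V}$ nor the finiteness/linearity type of the homological isoperimetric function, and which need not be locally finite; this flexibility is precisely what the framework of \cite{Elek98} affords. On such a model one has the $\ell^1$-chain complex with boundary $\partial$, the $\ell^\infty$-cochain complex $C^\bullet_{(\infty)}(X;V)$ with coboundary $\cbd$, and, once the homological isoperimetric function is finite (so that all $1$-cycles bound), the finite-valued filling norm $\|c\|_{\mathrm{fill}}=\inf\{\|b\|_1:\partial b=c\}$ on $1$-cycles.

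\emph{$(1)\Rightarrow(2)$ and $(2)\Rightarrow(3)$.} A hyperbolic $X$ satisfies a linear isoperimetric inequality, hence in particular a finite homological one. For the vanishing of cohomology I would fix a basepoint and a geodesic from it to each vertex and cone cells along these geodesics; by $\delta$-thinness of geodesic triangles the cone $\mathrm{cone}(e)$ of an edge $e$ is a $2$-chain of uniformly bounded mass with $\partial\,\mathrm{cone}(e)=e-\mathrm{cone}(\partial e)$. Given a cocycle $z\in C^2_{(\infty)}(X;V)$, set $w(e):=z(\mathrm{cone}(e))$; then $\|w\|_\infty\le C\|z\|_\infty$ for a uniform $C$, and for each $2$-cell $\Delta$ both $\Delta$ and $\mathrm{cone}(\partial\Delta)$ fill the $1$-cycle $\partial\Delta$, so their difference is a $2$-cycle, hence a boundary (the model being contractible), whence $(\cbd w)(\Delta)=z(\mathrm{cone}(\partial\Delta))=z(\Delta)$ since $\cbd z=0$. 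Thus $z=\cbd w$; as the construction is insensitive to the coefficient module, $\infcohom{X}{V}=0$ for every Banach $V$, a fortiori for every injective one. Finally $(2)\Rightarrow(3)$ is immediate, $\ell^\infty(\mathbb N,\R)$ being an injective Banach space.

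\emph{$(3)\Rightarrow(1)$.} Assuming finiteness of the homological isoperimetric function, it suffices to upgrade it to a \emph{linear} one, since a geodesic metric space satisfying a linear homological isoperimetric inequality is hyperbolic; reconstructing $\delta$-thin triangles from such a bound is the geometric heart of the argument, to be cited or proved by an area-comparison argument. I would argue the contrapositive. Suppose the filling norm is not dominated by a multiple of $\|\cdot\|_1$, so for every $k$ there is a $1$-cycle $c_k$ with $\|c_k\|_{\mathrm{fill}}>k\,\|c_k\|_1$. For each $k$ the set of fillings of $c_k$ is a closed affine subspace of the $\ell^1$-$2$-chains (the $2$-cycles being $\ker\partial$, a closed subspace) not containing $0$ and at distance exactly $\|c_k\|_{\mathrm{fill}}$ from it; Hahn--Banach produces a functional $\psi_k$ on the $2$-chains, of operator norm $\le1$, constant equal to $\|c_k\|_{\mathrm{fill}}$ on that subspace and hence vanishing on every $2$-cycle, in particular on the boundary of every $3$-chain. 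Thus each $\psi_k$ is a real $\ell^\infty$ $2$-cocycle with $\|\psi_k\|_\infty\le1$, and $\Psi:=(\psi_k)_k$ is a single cocycle in $C^2_{(\infty)}(X;\ell^\infty(\mathbb N,\R))$ with $\|\Psi\|_\infty\le1$. If $\Psi=\cbd W$ for a bounded $W=(w_k)_k$, then $\cbd w_k=\psi_k$ for all $k$, and evaluating on any filling $b$ of $c_k$ gives $\|c_k\|_{\mathrm{fill}}=\psi_k(b)=w_k(c_k)\le\|w_k\|_\infty\,\|c_k\|_1$, whence $\|w_k\|_\infty>k$ — contradicting $\|W\|_\infty<\infty$. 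Therefore $\infcohom{X}{\ell^\infty(\mathbb N,\R)}\ne0$, as required.

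\emph{Main obstacle.} The content is concentrated in $(3)\Rightarrow(1)$, and two points deserve attention. First, the coefficient module matters: each $\psi_k$ is, on its own, a coboundary (its primitive exists as soon as the model is contractible), so no $\R$-valued cocycle can witness the failure of a linear filling inequality — assembling the whole sequence into one $\ell^\infty(\mathbb N,\R)$-valued cocycle is exactly the device that makes the diagonal argument go through, which is why the statement is phrased with $\ell^\infty(\mathbb N,\R)$ (and then all injective $V$ come for free in $(2)$). Second, and more seriously, the implication ``linear homological isoperimetric inequality $\Rightarrow$ hyperbolic'' for an arbitrary geodesic metric space is the only non-formal ingredient and is where the real geometry lives; I would also verify that the finiteness hypothesis in $(2)$ and $(3)$ cannot be dropped, e.g.\ by producing a non-hyperbolic space with vanishing second $\ell^\infty$-cohomology but infinite homological isoperimetric function.
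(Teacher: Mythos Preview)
Your strategy matches the paper's in outline, but there are two genuine gaps.

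In $(1)\Rightarrow(2)$ you assert that $\mathrm{cone}(e)$, a filling of the geodesic-triangle cycle $e-\gamma_y+\gamma_x$ for an edge $e=(x,y)$, has uniformly bounded $\ell^1$-mass. This is false for arbitrary geodesic choices: $\delta$-thinness makes $\gamma_x$ and $\gamma_y$ fellow-travel, but they need not share any edges, so $\|e-\gamma_y+\gamma_x\|_1$, and hence any filling of it, has mass of order $d(p,e)$ (think of a ladder graph with $p$ at one end). To make your coning approach work you would need a Mineyev-type homological bicombing with $\|q(p,x)-q(p,y)\|_1$ uniformly bounded for adjacent $x,y$, a substantial ingredient you do not invoke. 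The paper takes a different route that \emph{genuinely uses} $1$-injectivity of $V$: it defines $h\colon B_1(X)\to V$ by $h(\partial c)=z(c)$, uses the linear isoperimetric inequality to see that $h$ is $\ell^1$-bounded on $B_1^{R_0}(X)$, then applies $1$-injectivity to extend $h$ to $C_1^{R_0}(X)$, and finally extends along geodesics to all of $C_1(X)$.

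In $(3)\Rightarrow(1)$ the filling norm is only non-trivial at a fixed scale: if fillings are allowed in all of $C_2(X)$ then coning any $1$-cycle over a single vertex already gives $\|c\|_{\mathrm{fill}}\le\|c\|_1$, so your sequence $(c_k)$ cannot exist. Thus your Hahn--Banach functionals $\psi_k$ live on $C_2^R(X)$ for one fixed $R$; but a class in $Z^2_\infsub(X;\ell^\infty(\mathbb N,\R))$ must be defined on all of $C_2(X)$ and bounded on every $C_2^{R'}$, with bounds uniform in $k$. The paper supplies exactly this missing step: under the finite homological isoperimetric hypothesis one has $|\cdot|_F^R\le K_R\,|\cdot|_F^{R+1}$ on $B_1^R(X)$, which lets one push the dual functionals from scale $R$ to all scales with norms controlled independently of $k$. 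Without this, $\Psi$ is not in the correct cochain complex and the hypothesis $H^2_\infsub=0$ does not give a contradiction. You are right that the remaining geometric content is ``linear homological isoperimetry $\Rightarrow$ hyperbolic''; the paper proves this separately by adapting the Bridson--Haefliger argument to real-coefficient $\ell^1$-chains on possibly non-locally-finite graphs.
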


When $X$ is a graph, admitting a finite isoperimetric function does not necessitate finite vertex degrees, as can be seen in the case of a Cayley graph with infinite generating set which we will encounter in Section \ref{sec:hypembed}. Such locally infinite graphs are pertinent to the definition of hyperbolically embedded subgroups, relevant for acylindrical hyperbolicity. Note that $\ell^\infty$-cohomology is invariant under quasi-isometry (Lemma \ref{lemma:quasi_isometry}, see also \cite{Elek98}), while it is currently an open question whether acylindrical hyperbolicity is a quasi-isometry invariant for finitely generated groups.

By extending our cohomology theory to the relative setting and proving an excision theorem, we give criteria for hyperbolicity of cusped spaces. As an application, we give the following characterisation of hyperbolically embedded subgroups, under the fairly mild additional condition that the ambient group $G$ has $S$-bounded $H^2$, see Definition \ref{defn:S-bounded}. We do not define this notion here in the introduction, but we point out that it is automatically satisfied if $G$ is finitely presented and $S$ is a finite generating set.

\begin{thm}
\label{thm:intro2}
    Let $G$ be a group with (possibly infinite) generating set $S$, and suppose that $G$ has $S$-bounded $H^2$. Let $H_1,\dots, H_n<G$ be finitely generated subgroups such that $d_S|_{H_i}$ is proper for every $i$, where $d_S$ is the word metric given by $S$.
    Then the family of subgroups $\{H_i\}$ is hyperbolically embedded in $(G,S)$ if and only if
    $$\infcohom{\cay{G,S},G/\{H_i\}}{V}=0$$
    for all injective Banach spaces $V$.
\end{thm}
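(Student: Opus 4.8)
The plan is to relate the hyperbolic embeddedness of $\{H_i\}$ in $(G,S)$ to the hyperbolicity of an associated \emph{cusped space}, and then to apply the relative version of \Cref{thm:intro1}. Set $\mathcal H=\bigsqcup_{i=1}^n H_i$ and build the cusped space $X=X(G,S,\{H_i\})$ from $\cay{G,S}$ by attaching a combinatorial horoball to each left coset $gH_i$. The horoballs form a $G$-invariant family $\mathcal B$ of subgraphs, indexed by $\bigsqcup_i G/H_i$, and combinatorial horoballs are hyperbolic with a universal constant, so $\mathcal B$ is a collection of uniformly hyperbolic subgraphs and the relative cohomology theory of the paper, including the relative form of \Cref{thm:intro1} and the excision theorem, applies to the pair $(X,\mathcal B)$. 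Note that one \emph{cannot} apply the relative theorem directly to $(\cay{G,S}, G/\{H_i\})$, since the cosets $gH_i$ with the subgraph metric need not be uniformly hyperbolic; replacing them by horoballs is exactly what fixes this, and excision will guarantee the cohomology is unchanged.

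The proof then chains three facts. \emph{First}, a geometric reduction: under the standing hypotheses that each $H_i$ is finitely generated and $d_S|_{H_i}$ is proper, $\{H_i\}$ is hyperbolically embedded in $(G,S)$ if and only if $X$ is hyperbolic. This is the hyperbolically-embedded analogue of the Groves--Manning characterisation of relative hyperbolicity via cusped spaces: collapsing the horoballs recovers the relative Cayley graph $\cay{G,S\sqcup\mathcal H}$ up to quasi-isometry, while properness of $d_S|_{H_i}$ controls how geodesics of $X$ enter and leave horoballs, which is the bounded-coset-penetration-type ingredient needed to translate between hyperbolicity of $X$ and the defining property of hyperbolic embeddedness. \emph{Second}, the hypothesis that $G$ has $S$-bounded $H^2$ is precisely what makes the pair $(X,\mathcal B)$ admit a finite relative homological isoperimetric function: $1$-cycles supported in the $\cay{G,S}$-part are filled with area a finite function of length by the $S$-bounded $H^2$ condition (which reduces to the usual bound on the homological Dehn function when $G$ is finitely presented and $S$ is finite), while cycles meeting the horoballs are controlled using that horoballs have linear homological Dehn functions and that the relative isoperimetric function discounts the portion of a filling carried by $\mathcal B$. \emph{Third}, the excision theorem identifies the relative $\ell^\infty$-cohomology of $(X,\mathcal B)$ with that of $(\cay{G,S},G/\{H_i\})$: for every injective Banach space $V$ one obtains $\infcohom{X}{\mathcal B}{V}\cong \infcohom{\cay{G,S}}{G/\{H_i\}}{V}$, and likewise for $V=\ell^\infty(\mathbb N,\R)$, with \Cref{lemma:quasi_isometry} allowing passage between quasi-isometric models of the cusped space along the way.

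Putting these together: $\{H_i\}$ is hyperbolically embedded in $(G,S)$ iff $X$ is hyperbolic iff, by the $S$-bounded $H^2$ hypothesis together with the relative version of \Cref{thm:intro1}, $\infcohom{X}{\mathcal B}{V}=0$ for all injective Banach spaces $V$, iff, by excision, $\infcohom{\cay{G,S}}{G/\{H_i\}}{V}=0$ for all injective Banach spaces $V$. (Exactly as in \Cref{thm:intro1}, one simultaneously gets the a priori weaker-looking equivalence with vanishing of the single group $\infcohom{\cay{G,S}}{G/\{H_i\}}{\ell^\infty(\mathbb N,\R)}$.)

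The main obstacle I expect is the first step: establishing the precise equivalence between hyperbolic embeddedness and hyperbolicity of the horoball-cusped space when $S$ is allowed to be infinite and the $H_i$ are only assumed finitely generated with $d_S|_{H_i}$ proper, which is where the geometry of horoballs and the coset-penetration estimates must be handled carefully and where the existing literature on Groves--Manning cusped spaces may need adapting to the relatively weak peripheral structure of hyperbolically embedded families. A secondary technical hurdle is verifying that $S$-bounded $H^2$, rather than local finiteness of $\cay{G,S}$, genuinely suffices to give $(X,\mathcal B)$ a finite relative homological isoperimetric function, so that \Cref{thm:intro1} may legitimately be invoked.
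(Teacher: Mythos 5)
Your proposal follows essentially the same route as the paper's own proof (\Cref{thm:proper_fif} and \Cref{thm:Sbounded}): quote the metric characterisation that, given properness of $d_S|_{H_i}$, hyperbolic embeddedness of $\{H_i\}$ is equivalent to hyperbolicity of $\cusp{X,\mathcal Y}$; use $S$-bounded $H^2$ to get the isoperimetric hypothesis; and combine the relative criterion \Cref{cor:cusp_hyp} (via \Cref{prop:cusp_rel_cohomo_0,prop:rel_cohom_surj,prop:vanishing_hyp}) with excision (\Cref{prop:excision}) to pass between the pair $(\cusp{X,\mathcal Y},\mathcal H_{\mathcal Y})$ and $(\cay{G,S},G/\{H_i\})$. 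The only divergence is cosmetic: where you posit a ``relative'' isoperimetric function discounting fillings inside the horoballs, the paper instead proves the ordinary finite homological isoperimetric function of the whole cusped space (which is what \Cref{cor:cusp_hyp} actually requires), by decomposing a closed path into pieces in $X$ — filled using $S$-bounded $H^2$ — and pieces in horoballs, closed up by coset geodesics whose length is controlled using properness of $d_S|_{H_i}$.
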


Note that a group is acylindrically hyperbolic if and only if it contains a non-degenerate hyperbolically embedded subgroup, so the above provides a cohomological characterisation of acylindrical hyperbolicity (see \cite[Theorem 3.4]{Osin18}).

\subsection{A few words on the proof of Theorem \ref{thm:intro1}} \label{strategy} In \cite{Ger96}, Gersten was the first to prove a result of this kind, establishing a cohomological characterisation of hyperbolic groups among finitely presented groups $G$. He works with the universal cover $X$ of the presentation complex. The cocompactness of the action $G$ on $X$ is essential for Gersten's original definition of $\ell^{\infty}$-cohomology. He uses this property both to relate the vanishing of  $\infcohom{X}{\ell^{\infty}(\mathbb N, \R)}$ to the linearity of the homological Dehn function of $G$ and to connect the latter property to the hyperbolicity of $G$.

However, Gersten's method does not work in our generality due to the lack of finiteness assumptions on $X$. To address this,  we introduce a cohomology theory which is a natural generalisation of $\ell^{\infty}$-cohomology for general graphs. Its key feature is that it allows us to analyse $X$ in a filtered way by considering an ascending sequence of interconnected chain complexes on $X$. This approach provides sufficient control over the homological filling functions of $X$.

Furthermore, we need a generalisation  of \cite[Chapter III.H, Theorem 2.9]{BH13} to our homological setting. This is done in Proposition \ref{prop:linear_isop_hyp}, which states that if $X$ satisfies a homological linear isoperimetric inequality, then $X$ is hyperbolic.  To prove this, we  first utilise the fact that when $X$ is not hyperbolic,  it contains arbitrarily thick triangles, as in the proof of \cite[Chapter III.H, Theorem 2.9]{BH13}. The main challenge in our setting is that we are not dealing with curves but with 1-chains as dictated by our (co)homology theory. We resolve this problem by using a series of technical results which relate  the $\ell_1$-norm of 1-chains to the length of their support.

\subsection{Structure of the paper} In Section \ref{sec:hyperbolic_graphs}, we develop the theory of $\ell^{\infty}$-cohomology for general graphs and derive hyperbolicity criteria. In Section \ref{sec:relative}, we consider relative cohomology and prove an excision theorem, with application to cusped spaces. In Section \ref{sec:extending}, we prove an extension result for cocycles, necessary for a technical debt from Section \ref{sec:relative}. In Section \ref{sec:hypembed}, we derive criteria for a set of subgroups to be hyperbolically embedded.

\subsection*{Acknowledgements} All authors would like to thank the Heilbronn Institute for funding the focused workshop where this paper was started. 
The fourth author was supported by the Additional Funding Programme for Mathematical Sciences, delivered by EPSRC (EP/V521917/1) and the Heilbronn Institute for Mathematical Research.

\section{Cohomological characterisation of hyperbolic graphs}
\label{sec:hyperbolic_graphs}
We start this section by recalling the definition of $\ell^\infty$-cohomology of graphs, as introduced by Elek \cite{Elek98}.
In Elek's paper, additional hypotheses are required on the graph under consideration, but those are not needed to state the definition.
We then state and prove our cohomological characterisation of hyperbolic graphs, Theorem \ref{thm:hyp_main}, which is equivalent to Theorem \ref{thm:intro1}, as we discuss in the remark below.

\begin{remark}
    Any geodesic metric space is quasi-isometric to a connected graph. It will be convenient to work with graphs, and define the cohomology in that setting, and then one can define the $\ell^\infty$-cohomology of a geodesic metric space as the $\ell^\infty$-cohomology of any graph quasi-isometric to it. This is well defined because of quasi-isometry invariance of $\ell^\infty$-cohomology, which we prove as Lemma \ref{lemma:quasi_isometry}, see also \cite{Elek98}. Alternatively, one can also define $\ell^\infty$-cohomology as below using $X$ itself rather than the vertex set $\vertx{X}$.

Similarly to this, to make sense of the statement of Theorem \ref{thm:intro1}, we say that a geodesic metric space has finite homological isoperimetric function, as defined below, if some graph quasi-isometric to it does.
\end{remark}

We consider arbitrary undirected graphs.
More precisely, for us a graph $X$ is a $1$-dimensional CW-complex.
We denote by $\vertx{X}$ the set of vertices (i.e., $0$-cells) of a graph $X$.
We endow $X$ with a metric $d$ in which every edge (i.e., $1$-cell) has length $1$ (when the graph is not connected this is a metric with values in $\mathbb{R}_{\ge 0} \cup \{+\infty\}$).

For every $i \ge 0$, we denote by $C_i(X)$ the $\mathbb R$-vector space with basis $\mathcal{V}_X^{i+1}$.
For every natural number $R$, we define $C_i^R(X)$ as the subspace with basis
\[\{(x_0,\dots,x_{i})\in \vertx{X}^{i+1}: d(x_j,x_k)\leq R \ \forall j,k\}.\]
That is, $C_i^R(X)$ is spanned by tuples of diameter at most $R$.
We will always endow $C_i(X)$ and $C_i^R(X)$ with their $\ell^1$-norms with respect to the basis considered above; we denote these norms by $\norm{\cdot}_1$.

We denote by $\partial\colon C_i(X) \to C_{i-1}(X)$ the usual boundary operator, and set $B_i^R(X)=\partial C_{i+1}^R(X)$.
On $B^R_i(X)$ we consider the filling norm $\norm{\cdot}^R_F$, coming from the $\ell^1$-norm on $C_{i+1}^R(X)$, obtained by considering all possible fillings and taking the infimum of their norms. Formally, it is defined as follows:
\[ \norm{b}^R_F = \inf\{\norm{c}_1 : c \in C_{i+1}^R(X),\ \partial c = b\}.\]

\begin{remark}\label{rmk:ZisB}
    Let $b \in C_i(X)$ for some $i \ge 1$.
    If $\bd b = 0$, then $b = \bd c$ for some $c \in C_{i+1}(X)$ (this can be seen by considering a ``cone'' over $b$).
    That is, cycles in $C_i(X)$ are boundaries.
\end{remark}

For linear functions $f$ with values in a normed vector space $V$, defined on $C_i^R(X)$ or $B_i^R(X)$, we denote their operator norm by $\lvert f \rvert_\infty^R$ or $\lvert f\rvert_F^R$, respectively; they take values in $\mathbb{R}_{\ge 0} \cup \{+\infty\}$.
If $f$ is defined on a subspace of $C_i(X)$ containing $C_i^R(X)$, then $\lvert f \rvert_\infty^R$ denotes the norm of its restriction to $C_i^R(X)$.

We define
\[C_\infsub^i(X;V)=\{f\colon C_i(X)\to V\ :\  f|_{C_i^R(X)}\text{\ is\ bounded\ }\forall R\in\mathbb{N}\}.\]
For clarity, $f$ is linear, and boundedness is with respect to the $\ell^1$-norm (and the bound is not uniform over all $R$).
That is, $\lvert f \rvert_\infty^R < +\infty$ for every $R\in\mathbb{N}$.
We then define cocycles and coboundaries in the usual way, and obtain the $\ell^\infty$-cohomology of $X$ with coefficients in $V$, which we denote by $H^\bullet_\infsub(X;V)$.

\subsection{Paths, fillings and isoperimetric functions in graphs}
An oriented edge in a graph $X$ is an open $1$-cell endowed with an orientation; an oriented edge naturally defines a pair $(v,w) \in \vertx{X}^2$, where $v$ and $w$ are respectively the tail and the head of the oriented edge.

A path $p$ is a finite sequence of oriented edges $e_1,\dots,e_l$ such that, for $i = 1,\dots,l-1$, the head of $e_i$ is equal to the tail of $e_{i+1}$; the positive integer $l$ is the length of the path, and we denote it by $\length{p}$.
The tail of $e_1$ and the head of $e_l$ are, respectively, the starting and ending points of $p$.
The path is \emph{closed} if its starting and ending points coincide.

Let $p$ be a closed path in  $X$, and let $R$ be a positive integer.

An \emph{$R$-filling} $(\Delta,\Phi)$ of $p$ consists of a triangulation $\Delta$ of the disc $D^2$ (in the sense of \cite[Definition 2.1 in Chapter III.H]{BH13}, i.e. the boundary of each $2$-cell consists of three distinct $1$-cells forming a loop) and a map $\Phi\colon \Delta\to X$ (not necessarily continuous) such that:
\begin{itemize}
    \item The restriction of $\Phi$ to the boundary of the triangulated disc describes the closed path $p$;
    \item The image of each 2-simplex of $\Delta$ has diameter at most $R$;
    \item $\Phi$ sends vertices to vertices. 
\end{itemize}
Denote by $\lvert \Delta \rvert$ the number of 2-simplices of $\Delta$.
The \emph{$R$-area} of $p$ is defined to be:
\[\rarea{R}{p}=\min \left\{ \vert \Delta \rvert \, : \, (\Delta,\Phi) \text{ an } R\text{-filling of } p\right\}.\]
If there is no $R$-filling, then set $\rarea{R}{p}=\infty$.

\begin{definition}\label{def:finte_isop_function}
Let $X$ be a graph.
$X$ is said to have a \emph{finite isoperimetric function} if there exist an integer $R_0 \ge 1$ and a function $\theta\colon \mathbb{N} \to \mathbb{R}_{\ge 0}$ such that $\mathrm{Area}_{R_0}(p) \le \theta(\length{p})$ for every closed path $p$ in $X$.
\end{definition}
Notice that, for any closed path $p$, the quantity $\mathrm{Area}_R(p)$ is non-increasing as a function of $R$.
In particular, if $X$ has a finite isoperimetric function $\theta$ with respect to the $R_0$-area (as in \Cref{def:finte_isop_function}), then every $R \ge R_0$ works as well.

Every oriented edge with tail-head pair $(v,w)\in\vertx{X}^2$ defines an element of $C_1^1(X)$, given by the pair $(v,w)$ itself.
If $p$ is a path, we obtain an element of $C_1^1(X)$ by summing the pairs corresponding to the oriented edges appearing in the path.
We denote this element by $\chain{p}$.

Notice that, if $p$ is a closed path, then $\chain{p} \in C_1^1(X) \cap B_1(X)$.
On the other hand, as we see in the following lemma, every element of $C_1^1(X) \cap B_1(X)$ is a finite combination of closed paths and elements of the form $(x,x)$.
The statement is more general, as it provides a decomposition into paths of any element of $C_1^1(X)$, not necessarily boundaries, but of course in the general case also non-closed paths have to be used.

\begin{lemma}\label{lemma:sum_path}
    Let $X$ be a graph and let $T$ be a subset of $\vertx{X}$.
    Let $c \in C_1^1(X)$, with $\partial c$ supported on $T$.
    Then, $c$ can be written as a finite sum of the form
    \[ c = \sum_i\alpha_i \chain{p_i} + \sum_j\beta_j \chain{q_j} + \sum_k \mu_k\cdot(x_k,x_k) + \sum_l \nu_l\cdot [(y_l,z_l)+(z_l,y_l)],\]
    where $\alpha_i,\beta_j,\mu_k,\nu_l \in \mathbb{R}$, the $p_i$ are paths with endpoints in $T$, the $q_j$ are closed paths, the $x_k$ are vertices, and the $(y_l,z_l)$ are pairs of adjacent vertices, such that
    \[ \norm{c}_1 =  \sum_i\norm{\alpha_i}\cdot\length{p_i} + \sum_j\norm{\beta_i}\cdot \length{q_i} + \sum_k\norm{\mu_k} \]
    and $\sum_l\norm{\nu_l} \le \norm{c}_1$.
    If $c$ has coefficients in $\mathbb{Z}$, then there is such an expression with $\alpha_i,\beta_j,\mu_k, \nu_l \in \mathbb{Z}$.
\end{lemma}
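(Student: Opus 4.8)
The plan is to first reduce to the case where $c$ has non-negative coefficients on oriented edges — at the cost of introducing the symmetric-pair terms $(y_l,z_l)+(z_l,y_l)$ — and then to perform a standard ``flow decomposition'' of a non-negative $1$-chain into paths and cycles.

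First I would split off the loops, writing $c = c_{\mathrm{loop}} + c_{\mathrm{edge}}$, where $c_{\mathrm{loop}} = \sum_k \mu_k\cdot(x_k,x_k)$ collects the coefficients of the basis elements of the form $(x,x)$ and $c_{\mathrm{edge}}$ is supported on pairs of distinct adjacent vertices. Since $\bd c_{\mathrm{loop}} = 0$, the chain $c_{\mathrm{edge}}$ still has boundary supported on $T$, and $\norm{c}_1 = \norm{c_{\mathrm{edge}}}_1 + \sum_k \norm{\mu_k}$. Next, for every oriented edge $e$ occurring in $c_{\mathrm{edge}}$ with a negative coefficient $a_e$, I would apply the identity $a_e\cdot e = \norm{a_e}\cdot\bar e - \norm{a_e}\cdot(e+\bar e)$, where $\bar e$ is $e$ with the opposite orientation; this produces $c_{\mathrm{edge}} = c' + \sum_l \nu_l\cdot[(y_l,z_l)+(z_l,y_l)]$ where $c'$ has all non-negative coefficients. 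Because each $e+\bar e$ is a cycle we have $\bd c' = \bd c_{\mathrm{edge}}$, a short check gives $\norm{c'}_1 = \norm{c_{\mathrm{edge}}}_1$ (the flips only redistribute mass within a single unoriented edge), and $\sum_l \norm{\nu_l}$ equals the total absolute value of the negative coefficients of $c_{\mathrm{edge}}$, whence $\sum_l \norm{\nu_l} \le \norm{c_{\mathrm{edge}}}_1 \le \norm{c}_1$ — exactly the bound required of the symmetric terms.

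It then remains to decompose $c'$ — a non-negative $1$-chain supported on pairs of distinct adjacent vertices whose boundary is supported on $T$ — as $\sum_i \alpha_i\chain{p_i} + \sum_j \beta_j\chain{q_j}$ with all $\alpha_i, \beta_j > 0$ and $\norm{c'}_1 = \sum_i \alpha_i\length{p_i} + \sum_j \beta_j\length{q_j}$; combined with the first two steps (and $\alpha_i = \norm{\alpha_i}$, $\beta_j = \norm{\beta_j}$) this gives the lemma. Here I would induct on the (finite) number of oriented edges in the support of $c'$. The key point is that, since $\bd c'$ is supported on $T$, at each vertex $v \notin T$ the total coefficient of support edges entering $v$ equals the total coefficient of those leaving $v$; consequently, starting from any edge of the support, one may keep prepending support edges as long as the current starting vertex lies outside $T$, and likewise keep appending as long as the current ending vertex lies outside $T$. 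As the support is finite this terminates, producing either a closed path $q$ (as soon as a vertex is revisited) or a path $p$ with both endpoints in $T$, and in either case one using only edges of the support and traversing each oriented edge at most once. Subtracting $\varepsilon\chain{p}$ (resp.\ $\varepsilon\chain{q}$), with $\varepsilon > 0$ the minimum of the coefficients along it, keeps all coefficients non-negative, keeps the boundary supported on $T$ (the boundary of $\chain{p}$ being supported on its two endpoints, that of $\chain{q}$ being zero), strictly shrinks the support, and lowers $\norm{\cdot}_1$ by exactly $\varepsilon\length{p}$ (resp.\ $\varepsilon\length{q}$) since these chains have coefficient $1$ on each oriented edge they use. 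Iterating and summing the extracted pieces yields the desired decomposition with the required norm equality.

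For the integer refinement, if $c$ has integer coefficients then so do $c_{\mathrm{loop}}$, all the $\nu_l$, and $c'$, and the numbers $\varepsilon$ above are minima of positive integers, hence positive integers; so every coefficient produced is an integer. The step requiring the most care is the flow decomposition: one must verify that the prepend/append procedure really does terminate in a path to $T$ or a cycle (this is where both the hypothesis ``$\bd c'$ supported on $T$'' and finiteness of the support enter), and that each extraction simultaneously preserves all the structural hypotheses and accounts exactly for the $\ell^1$-mass removed, so that no part of $\norm{c}_1$ leaks into the symmetric-pair terms — those being controlled purely by the reduction to non-negative coefficients.
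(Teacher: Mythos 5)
Your proof is correct, but it takes a genuinely different route from the paper: the paper gives no self-contained argument, instead deducing the lemma from the analogous statement for cellular cycles in \cite[Theorem 6]{Min02} and only illustrating by example why the reversal terms $\nu_l\cdot[(y_l,z_l)+(z_l,y_l)]$ are needed. Your argument is direct and elementary: split off the degenerate pairs $(x_k,x_k)$, flip each negatively-weighted oriented edge at the cost of a symmetric term (exactly the mechanism behind the paper's example $c=(x,y)+(y,z)-(x,z)$), and then run a greedy flow decomposition of the resulting non-negative chain into paths with endpoints in $T$ and closed paths, extracting at each step the minimal coefficient along the walk. You correctly identify the points that make this work: conservation of the non-negative flow at vertices outside $T$ lets the walk always be prepended/appended until it closes up or hits $T$, finiteness of the support gives termination, the extracted chains $\chain{p}$, $\chain{q}$ have coefficients in $\{0,1\}$ so each extraction removes exactly $\norm{\varepsilon}\cdot\length{p}$ (resp.\ $\norm{\varepsilon}\cdot\length{q}$) of $\ell^1$-mass, and the flips preserve $\norm{\cdot}_1$ while contributing at most $\norm{c}_1$ to $\sum_l\norm{\nu_l}$. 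Compared with the paper's citation, your proof has the advantage of handling directly the features not literally covered by the cited statement---real rather than integral coefficients, boundaries supported on $T$ rather than honest cycles, and the exact norm bookkeeping together with the integer refinement---at the cost of length; the paper's approach is shorter but leaves that adaptation implicit.
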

\begin{proof}
    This can be deduced easily, e.g. from \cite[Theorem 6]{Min02}, where the analogous statement for cellular cycles is stated. 
    \end{proof}
    
    In the above, the terms of the form $\nu_l [(y_l,z_l)+(z_l,y_l)]$ are needed because some edges might need to be ``reversed'' before being incorporated into paths.
    As an example, suppose that $x,y,z$ are three pairwise-adjacent vertices, and take $c = (x,y)+(y,z)-(x,z)$.
    Notice that $\partial c = 0$.
    Then, we can write $c = \chain{q}-[(x,z)+(z,x)]$, where $q$ is the closed path that visits, in order, $x, y, z$ and returns to $x$.

Notice that $(x,z) + (z,x) = \chain{q_{xz}}$, where $q_{xz}$ is a closed path of length $2$ that visits $x$, $z$ and returns to $x$.
Therefore, in the statement of \Cref{lemma:sum_path} one could treat the terms of the form $\nu_l [(y_l,z_l)+(z_l,y_l)]$ as instances of elements of the form $\beta_j\chain{q_j}$; by doing this, thus writing only 
\[ c = \sum_i\alpha_i \chain{p_i} + \sum_j\beta_j \chain{q_j} + \sum_k \mu_k\cdot(x_k,x_k),\]
one only gets the worse estimate
\[ 3\norm{c}_1 \ge  \sum_i\norm{\alpha_i}\cdot\length{p_i} + \sum_j\norm{\beta_i}\cdot \length{q_i} + \sum_k\norm{\mu_k}. \]

\Cref{lemma:sum_path} applies only to elements of $C_1^1(X)$.
However, any $c \in C_1(X)$ is homologous to an element of $C_1^1(X)$, as in the following lemma.

\begin{lemma}\label{lemma:Rto1}
    Let $X$ be a graph and $R$ be a positive integer.
    Then, any $c \in C_1^R(X)$ can be written as
    \[ c = b + c',\]
    where $b \in B_1^R(X)$ and $c' \in C_1^1(X)$,
    with $\norm{c'}_1 \le R\cdot\norm{c}_1$, $\norm{b}_1 \le (R+1)\cdot\norm{c}_1$ and $\norm{b}_F^R \le (R+1)\cdot\norm{c}_1$.
    If $c$ has integer coefficients, then also $b$ and $c'$ can be taken with integer coefficients.
\end{lemma}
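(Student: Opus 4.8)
The plan is to replace each ``long edge'' $(x,y)$ occurring in $c$ by a geodesic edge-path from $x$ to $y$, and to account for the difference by a $2$-chain supported on thin triangles. Write $c = \sum_{x \ne y} a_{(x,y)}(x,y) + \sum_x a_{(x,x)}(x,x)$, where the first sum ranges over ordered pairs of distinct vertices with $d(x,y) \le R$. For each such ordered pair $(x,y)$, fix a geodesic edge-path $p_{xy}$ from $x$ to $y$; this exists because $x$ and $y$ are vertices at finite distance in the graph, and it has length $l_{xy} = d(x,y) \le R$. Denote its vertex sequence by $x = v_0, v_1, \dots, v_{l_{xy}} = y$, so that $\chain{p_{xy}} = \sum_{i=1}^{l_{xy}}(v_{i-1}, v_i) \in C_1^1(X)$ with $\norm{\chain{p_{xy}}}_1 \le l_{xy}$ and $\bd \chain{p_{xy}} = \bd(x,y)$.

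Set $c' := \sum_{x\ne y} a_{(x,y)}\chain{p_{xy}} + \sum_x a_{(x,x)}(x,x) \in C_1^1(X)$ and $b := c - c'$. The bound $\norm{c'}_1 \le R\norm{c}_1$ is immediate, since each $\chain{p_{xy}}$ contributes at most $l_{xy} \le R$ times $\norm{a_{(x,y)}}$ and each $(x,x)$ contributes $1 \le R$ times $\norm{a_{(x,x)}}$; hence also $\norm{b}_1 \le \norm{c}_1 + \norm{c'}_1 \le (R+1)\norm{c}_1$. For the filling norm I would use the identity obtained by telescoping,
\[ \bd\Bigl[(x,x,x) - \sum_{i=1}^{l_{xy}}(x, v_{i-1}, v_i)\Bigr] = (x,y) - \chain{p_{xy}},\]
which follows from $\bd(x, v_{i-1}, v_i) = (v_{i-1}, v_i) - (x, v_i) + (x, v_{i-1})$ (the cross terms telescoping to $(x,x) - (x,y)$) together with $\bd(x,x,x) = (x,x)$. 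Each triple $(x, v_{i-1}, v_i)$ lies in $C_2^R(X)$ because $d(x, v_j) = j \le l_{xy} \le R$ for $j \in \{i-1,i\}$ and $d(v_{i-1}, v_i) = 1 \le R$, and $(x,x,x) \in C_2^R(X)$ trivially. Therefore
\[ e := \sum_{x\ne y} a_{(x,y)}\Bigl[(x,x,x) - \sum_{i=1}^{l_{xy}}(x, v_{i-1}, v_i)\Bigr] \in C_2^R(X)\]
satisfies $\bd e = b$, so $b \in B_1^R(X)$, and $\norm{e}_1 \le \sum_{x \ne y}\norm{a_{(x,y)}}\,(1 + l_{xy}) \le (R+1)\norm{c}_1$, giving $\norm{b}_F^R \le \norm{e}_1 \le (R+1)\norm{c}_1$.

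For the integrality statement, observe that $c'$, $b$ and $e$ are all built from $c$ by distributing the coefficients $a_{(x,y)}$ along the fixed geodesics and the associated triangulated discs, with no division, so integer coefficients are preserved. I do not expect any genuine obstacle here: the only choice is the system of geodesics $\{p_{xy}\}$, whose existence is automatic in a graph, and the only point requiring (trivial) care is the leftover diagonal term $(x,x)$, absorbed using $(x,x) = \bd(x,x,x)$. The substance is just the telescoping identity above and the diameter check placing the relevant triples in $C_2^R(X)$.
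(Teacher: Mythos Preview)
Your proof is correct and follows essentially the same approach as the paper's: both replace each generator $(x,y)$ by a geodesic edge-path and fill the difference with the telescoping $2$-chain $\pm\bigl[(x,x,x)-\sum_i(x,v_{i-1},v_i)\bigr]$, yielding the same identity up to sign. The only cosmetic difference is that you keep the diagonal terms $(x,x)$ in $c'$ whereas the paper absorbs them into $b$ via $\partial(x,x,x)=(x,x)$; both choices give the stated bounds.
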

\begin{proof}
    Write $c = \sum_{i=1}^k\alpha_i(x_i,y_i)$ with $\alpha_i \in \mathbb{R}$, $(x_i,y_i)\in\vertx{X}^2$ and $d(x_i,y_i)\le R$.
    Let $p_i$ be a path of minimal length from $x_i$ to $y_i$, and suppose it encounters, in order, the vertices $p_i^0, \dots, p_i^\length{p_i}$, with $p_i^0 = x_i$ and $p_i^\length{p_i} = y_i$.
    The following identity holds:
    \[ \chain{p_i} - (x_i,y_i) = \bd \left[ -(p_i^0,p_i^0,p_i^0) + \sum_{j=1}^\length{p} (p_i^0, p_i^{j-1}, p_i^j) \right]. \]
    The $2$-chain in square brackets has $\ell^1$-norm at most $R+1$, and belongs to $C_2^R(X)$.
    Define $c' = \sum_{i=1}^k\alpha_i\chain{{p}_i}$; we have $c-c' \in B_1^R(X)$, with $\norm{c - c'}_F^R \le (R+1)\cdot\norm{c}_1$. 
    Moreover, $\norm{c'}_1 \le R\cdot\norm{c}_1$.
\end{proof}

With the next lemma, we can pass from a finite isoperimetric function in the sense of \Cref{def:finte_isop_function} to an analogous notion, where instead of fillings we consider $2$-chains.

\begin{definition}\label{def:fhif}
    Let $X$ be a graph.
    We say that $X$ has a \emph{finite homological isoperimetric function} if there exist a positive integer $R_0$ and a function $\theta\colon\mathbb N \to \mathbb{R}_{\ge 0}$ such that, for every closed path $p$, we have $\chain{p} \in B_1^{R_0}(X)$ and $\norm{\chain{p}}_F^{R_0} \le \theta(\length{p})$.
    
\end{definition}

\begin{lemma}\label{lemma:fif_fhif}
    Let $X$ be a graph with a finite isoperimetric function.
    Then, $X$ has a finite homological isoperimetric function (with the same $R_0$ and $\theta$).
\end{lemma}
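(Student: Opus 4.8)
The plan is to convert a geometric filling of a closed path by a triangulated disc into an algebraic filling by a $2$-chain with controlled $\ell^1$-norm. Fix a positive integer $R_0$ and a function $\theta\colon\mathbb N\to\mathbb R_{\ge 0}$ witnessing that $X$ has a finite isoperimetric function, and let $p$ be an arbitrary closed path in $X$. Since $\rarea{R_0}{p}\le\theta(\length{p})<\infty$, there exists an $R_0$-filling $(\Delta,\Phi)$ of $p$ with $\lvert\Delta\rvert\le\theta(\length{p})$. After fixing an orientation of $D^2$, equip each $2$-simplex $\sigma$ of $\Delta$ with an ordered triple of its vertices $(a_\sigma,b_\sigma,c_\sigma)$ representing that orientation, and set
\[ c=\sum_{\sigma}\bigl(\Phi(a_\sigma),\Phi(b_\sigma),\Phi(c_\sigma)\bigr)\in C_2(X),\]
the sum ranging over all $2$-simplices of $\Delta$. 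Since $\Phi$ sends vertices to vertices and the image of every $2$-simplex has diameter at most $R_0$, each summand is a basis element of $C_2^{R_0}(X)$, so $c\in C_2^{R_0}(X)$ and $\norm{c}_1\le\lvert\Delta\rvert\le\theta(\length{p})$.

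It then remains to verify the identity $\partial c=\chain{p}$; granting it, we obtain $\chain{p}\in B_1^{R_0}(X)$ and $\norm{\chain{p}}_F^{R_0}\le\norm{c}_1\le\theta(\length{p})$ directly from the definition of the filling norm, which proves the lemma with the same $R_0$ and $\theta$. For the identity, I would argue entirely at the level of ordered simplicial chains: since $\Phi$ carries vertices to vertices it induces a linear map $\Phi_*$ on ordered simplicial chains commuting with $\partial$, and on $\Delta$ the $2$-chain $z=\sum_\sigma(a_\sigma,b_\sigma,c_\sigma)$ satisfies $\partial z=\ell$, where $\ell$ is the boundary loop of $\Delta$ recorded as a $1$-chain of pairs of consecutive (adjacent) boundary vertices. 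Indeed, each interior $1$-simplex of $\Delta$ is a face of exactly two $2$-simplices which induce opposite orientations on it and hence cancels, while each boundary $1$-simplex appears once and survives with the induced boundary orientation. Applying $\Phi_*$ and invoking the first defining property of an $R_0$-filling --- that $\Phi$ restricted to $\partial\Delta$ describes the path $p$ --- yields $\partial c=\Phi_*(\partial z)=\Phi_*(\ell)=\chain{p}$.

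I do not anticipate a genuine obstacle: the only delicate point is the sign bookkeeping in the cancellation of interior edges, which is precisely the standard statement that a triangulated disc carries a fundamental class relative to its boundary, and the possible failure of $\Phi$ to be continuous is immaterial since only the underlying vertex map enters the argument. Should one wish to sidestep the relative fundamental class, the identity $\partial c=\chain{p}$ can instead be established by induction on $\lvert\Delta\rvert$, peeling off one $2$-simplex incident to the boundary at a time.
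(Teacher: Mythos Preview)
Your approach is exactly the one the paper sketches in a single sentence, and the overall strategy is correct. There is, however, a genuine subtlety at the step where you claim $\partial z=\ell$ (and hence $\partial c=\chain{p}$). The cancellation argument you give---``each interior $1$-simplex is a face of exactly two $2$-simplices which induce opposite orientations on it and hence cancels''---is valid in the \emph{oriented} simplicial chain complex, where $(u,v)=-(v,u)$. But the complex $C_*(X)$ used here is the \emph{ordered} one, with basis $\mathcal V_X^{i+1}$ and no such relation; in particular $(u,v)+(v,u)\neq 0$. Already for a single triangle the claim fails: $\partial(a,b,c)=(a,b)+(b,c)-(a,c)$, not $(a,b)+(b,c)+(c,a)$. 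For a quadrilateral $a,b,c,d$ triangulated by the diagonal $\{a,c\}$, one checks that no choice of one ordered triple per $2$-simplex has boundary equal to $(a,b)+(b,c)+(c,d)+(d,a)$; either the interior edge fails to cancel, or a boundary edge appears as $-(a,d)$ instead of $+(d,a)$. Your proposed inductive peeling runs into the same obstruction at the base case.

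The fix is easy and does not affect the substance of the lemma. With any consistent choice of orderings, the discrepancy $\chain{p}-\partial c$ is a finite sum of terms of the form $(u,v)+(v,u)$ supported on edges of the filling, and each such term equals $\partial\bigl[(u,v,u)+(u,u,u)\bigr]$, a $2$-chain in $C_2^{R_0}(X)$ of $\ell^1$-norm $2$. Adding these corrections gives a genuine filling of $\chain{p}$, at the cost of replacing $\theta$ by, say, $\theta'(n)=\theta(n)+2n$. The paper's own one-line proof glosses over this point as well, so the parenthetical ``with the same $R_0$ and $\theta$'' is slightly optimistic; but the only place the lemma is invoked with a specific $\theta$ is \Cref{lemma:hyp_filling}, where $\theta$ is linear and a linear correction is harmless.
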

\begin{proof}
The assumption of $X$ having a finite isoperimetric function directly implies the existence of $R_0 \ge 1$ and $\theta\colon\mathbb{N} \to \mathbb{R}_{\ge 0}$ such that 
$\rarea{R_0}{p} \le \theta(\length{p})$ for every closed path $p$.

An $R_0$-filling of a path $p$ naturally gives rise to an element of $C_2^{R_0}(X)$ whose boundary equals $\chain{p}$ and whose $\ell^1$-norm is at most the area of the filling.
It follows that $\chain{p} \in B_1^{R_0}(X)$ and
\[\norm{\chain{p}}_F^{R_0} \le \rarea{R_0}{p} \le \theta(\length{p}),\]
proving the assertion.
\end{proof}

In the following lemma we use Lemma \ref{lemma:sum_path} to upgrade a finite homological isoperimetric inequality to a filling inequality.

\begin{lemma}\label{lemma:fhif}
    Let $X$ be a graph with a finite homological isoperimetric function, with parameters $R_0$ and $\theta_0$.
    Then there exists a function $\theta\colon\mathbb N \to \mathbb{R}_{\ge 0}$ such that the following hold for every $R \ge R_0$:
    \begin{enumerate}
        \item\label{it:fif_fill_R} If $b \in C_1^R(X) \cap B_1(X)$ then $b \in B_1^{R}(X)$;
        \item\label{it:fif_fill_norm} If $b \in B_1^{R}(X)$ has integer coefficients, then $\lvert b \rvert_F^{R} \le \theta(R \cdot \lvert b\rvert_1)$.
    \end{enumerate}
    If $\theta_0$ is super-additive, then we can take $\theta(s)=\theta_0(3s)+5s$.
\end{lemma}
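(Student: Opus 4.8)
The plan is to reduce an arbitrary $R$-cycle $b$, in two steps, to a combination of closed paths: first to an element of $C_1^1(X)$ via \Cref{lemma:Rto1}, and then---using that $b$ is a cycle---to closed paths plus a few degenerate terms via \Cref{lemma:sum_path}; each closed path is then filled by the finite homological isoperimetric hypothesis, and what remains is to keep track of the constants. Before fixing $R$ I would reduce to the case that $\theta_0$ is super-additive: given any $\theta_0\colon\mathbb N\to\R_{\ge 0}$, its super-additive closure $\bar\theta_0(n)=\max\{\theta_0(n_1)+\dots+\theta_0(n_k):k\ge1,\ n_i\ge1,\ \sum_i n_i=n\}$ is well defined (the maximum runs over the finitely many compositions of $n$), non-decreasing, super-additive, and dominates $\theta_0$; since enlarging $\theta_0$ only weakens the requirement in \Cref{def:fhif}, we may assume $\theta_0$ is super-additive and prove that $\theta(s)=\theta_0(3s)+5s$ works for every $R\ge R_0$.

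For \eqref{it:fif_fill_R}, fix $R\ge R_0$ and let $b\in C_1^R(X)\cap B_1(X)$. By \Cref{lemma:Rto1} write $b=b_0+c'$ with $b_0\in B_1^R(X)$, $c'\in C_1^1(X)$, $\norm{c'}_1\le R\norm{b}_1$ and $\norm{b_0}_F^R\le(R+1)\norm{b}_1$, with integer coefficients if $b$ has them. Both $b$ and $b_0$ are boundaries, hence $\bd c'=0$, so \Cref{lemma:sum_path} applied with $T=\emptyset$ writes $c'$ as a combination of terms $\chain{q_j}$ with $q_j$ closed paths, terms $(x_k,x_k)$, and terms $(y_l,z_l)+(z_l,y_l)$. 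Now $(x_k,x_k)=\bd(x_k,x_k,x_k)\in B_1^1(X)$, while $\chain{q_j}$ and $(y_l,z_l)+(z_l,y_l)$ (the latter equal to $\chain{q}$ for a closed path $q$ of length $2$) lie in $B_1^{R_0}(X)$ by hypothesis; hence $c'\in B_1^{R_0}(X)\subseteq B_1^R(X)$, and so $b=b_0+c'\in B_1^R(X)$.

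For \eqref{it:fif_fill_norm}, assume moreover that $b\in B_1^R(X)$ has integer coefficients, so that $b_0$, $c'$, and all coefficients above may be taken integral. Absorbing the terms $(y_l,z_l)+(z_l,y_l)$ into closed-path terms of length $2$, as in the remark following \Cref{lemma:sum_path}, write $c'=\sum_j\beta_j\chain{q_j}+\sum_k\mu_k(x_k,x_k)$ with $\beta_j\in\Z\setminus\{0\}$, $\mu_k\in\Z$, and $\sum_j\norm{\beta_j}\length{q_j}+\sum_k\norm{\mu_k}\le3\norm{c'}_1$. Using $\norm{(x_k,x_k)}_F^{R_0}\le1$ and $\norm{\chain{q_j}}_F^{R_0}\le\theta_0(\length{q_j})$, together with super-additivity (each $\norm{\beta_j}$ a positive integer) in the form $\sum_j\norm{\beta_j}\theta_0(\length{q_j})\le\theta_0\big(\sum_j\norm{\beta_j}\length{q_j}\big)\le\theta_0(3\norm{c'}_1)$, one obtains $\norm{c'}_F^{R_0}\le\theta_0(3\norm{c'}_1)+3\norm{c'}_1$. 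Finally, combining $\norm{b}_F^R\le\norm{b_0}_F^R+\norm{c'}_F^{R_0}$ with $\norm{c'}_1\le R\norm{b}_1$, monotonicity of $\theta_0$, and $4R+1\le5R$ (valid since $R\ge1$), we get $\norm{b}_F^R\le\theta_0(3R\norm{b}_1)+5R\norm{b}_1=\theta(R\norm{b}_1)$.

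There is no genuine conceptual obstacle here; the one load-bearing point is that super-additivity---and hence the restriction to integer coefficients in \eqref{it:fif_fill_norm}---is precisely what allows the fillings of the many closed paths $q_j$ to be merged into a single bound in terms of $\norm{c'}_1$. Everything else is constant-chasing, the mildly fussy part of which is keeping straight the various filling notions in play ($R_0$- versus $R$-fillings, and the explicit fillings of the degenerate terms).
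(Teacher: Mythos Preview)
Your proof is correct and follows essentially the same approach as the paper: reduce to super-additive $\theta_0$, use \Cref{lemma:Rto1} to pass to $C_1^1(X)$, decompose via \Cref{lemma:sum_path} into closed paths and degenerate terms, fill each using the hypothesis, and assemble the bound $\theta_0(3R\norm{b}_1)+(4R+1)\norm{b}_1\le\theta(R\norm{b}_1)$. The only difference is organisational: the paper first treats the case $b\in C_1^1(X)\cap B_1(X)$ separately and then reduces the general case to it, whereas you carry out both steps in a single pass; the constants and the use of integrality/super-additivity are identical.
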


\begin{proof}
Let $R_0\ge 1$ and $\theta_0\colon\mathbb{N}\to\mathbb{R}_{\ge 0}$ be the positive integer and the function as in \Cref{def:fhif}.
We can assume that $\theta_0$ is super-additive, that is, $\theta_0(r+s) \ge \theta_0(r) + \theta_0(s)$ for every $r,s\in\mathbb{N}$.

By construction, for every $R\ge R_0$ and closed path $p$, we have $\chain{p} \in B_1^R(X)$ and
\[\lvert \chain{p} \rvert_F^R \le \lvert \chain{p} \rvert_F^{R_0} \le \theta_0(\length{p}).\]

If $b \in C_1^1(X) \cap B_1(X)$, then we can write
\[b = \beta_1 \chain{q_1} + \dots + \beta_m \chain{q_m} + \mu_1(x_1,x_1) + \dots + \mu_n(x_n,x_n)\]
as in \Cref{lemma:sum_path} (with $T$ empty, and treating the elements of type $\nu_l [(y_l,z_l)+(z_l,y_l)]$ as chains associated to closed paths), where the $q_j$ are closed paths.
Notice that $(x_j,x_j) = \partial (x_j,x_j,x_j)$, so it belongs to $B_1^1(X)$.
This implies that $b$ belongs to $B_1^{R}(X)$ for every $R \ge R_0$, since every summand does.
If, moreover, $b$ has integer coefficients, we can assume that $\beta_i,\mu_j \in \{-1,+1\}$, and we have
\[\length{q_1} + \cdots + \length{q_m} + n \le 3\norm{b}_1.\]
Therefore, for every $R \ge R_0$,
\begin{align*}
    \lvert b \rvert_F^{R} &\le \theta_0(\length{q_1}) + \dots + \theta_0(\length{q_m}) + n \\
    &\le \theta_0(\length{q_1} + \cdots + \length{q_m}) + n\\
    &\le \theta_0(3\lvert b\rvert_1) + 3\lvert b \rvert_1.
\end{align*}

We now prove \eqref{it:fif_fill_R} and \eqref{it:fif_fill_norm} without assuming $b \in C_1^1(X)$.
Fix $R \ge R_0$ and suppose that  $b\in C_1^R(X) \cap B_1(X)$.
From \Cref{lemma:Rto1}, we know that $b$ is homologous to some $b' \in C_1^1(X)$, with $\norm{b'}_1 \le R\cdot\norm{b}_1$, so that $\lvert b - b'\rvert_F^R \le (R+1)\cdot\lvert b\rvert_1$. 
In particular, $b' = b - (b-b')$ belongs to $B_1(X)$.
Since $b' \in C_1^1(X) \cap B_1(X)$, we already know that $b' \in B_1^R(X)$. Therefore, $b = (b-b') + b' \in B_1^R(X)$, proving \eqref{it:fif_fill_R}.
    
If $b$ has integer coefficients, then we can assume that also $b'$ has integer coefficients, and since it belongs to $C_1^1(X) \cap B_1(X)$ we conclude that
\begin{align*}
        \lvert b \rvert_F^R &\le \lvert b' \rvert_F^R + \lvert b - b' \rvert_F^R \\
        &\le \theta_0(3\lvert b' \rvert_1) + 3\lvert b' \rvert_1 
 + (R+1)\cdot\lvert b \rvert_1\\
        &\le \theta_0(3R\cdot \lvert b\rvert_1) + (4R+1)\cdot\lvert b \rvert_1.
    \end{align*}
    Therefore, we get \eqref{it:fif_fill_norm} by setting, for instance, $\theta(s) = \theta_0(3s) + 5s$. 

\end{proof}

\subsection{The statement}
The following is the main result of this section.
\begin{thm}\label{thm:hyp_main}
    Let $X$ be a connected graph. The following conditions are equivalent:
 \begin{enumerate}
    \item\label{item_hyp_main_hyp} $X$ is hyperbolic;
    \item\label{item_hyp_main_V} $X$ has a finite homological isoperimetric function and $\infcohom{X}{V}=0$ for all 1-injective Banach spaces $V$;
    \item\label{item_hyp_main_univ} $X$ has a finite homological isoperimetric function and $\infcohom{X}{\ell^{\infty}(\mathbb N, \R)}=0$.
    \end{enumerate}
\end{thm}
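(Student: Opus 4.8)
The cleanest structure is to prove (1) $\Rightarrow$ (2) $\Rightarrow$ (3) $\Rightarrow$ (1), using (2) $\Rightarrow$ (3) for free (since $\ell^\infty(\mathbb N,\mathbb R)$ is 1-injective, so (2) is a special case of... wait, no — (3) is the special case of (2)). Let me think about which direction each implication goes and which is the hard part.
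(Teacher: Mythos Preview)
Your proposal is not a proof; it is only the first sentence of a proof plan, and you have not engaged with either of the two substantive implications. The cycle $(1)\Rightarrow(2)\Rightarrow(3)\Rightarrow(1)$ is indeed the right structure, and you are correct that $(2)\Rightarrow(3)$ is immediate because $\ell^\infty(\mathbb N,\mathbb R)$ is $1$-injective. But the content of the theorem lies entirely in $(1)\Rightarrow(2)$ and $(3)\Rightarrow(1)$, and you have said nothing about either.

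For $(1)\Rightarrow(2)$ you must do two things: first, observe that a hyperbolic graph satisfies a \emph{linear} isoperimetric inequality (hence certainly a finite homological one); second, given a cocycle $f\in Z^2_\infsub(X;V)$, actually construct a primitive $g\in C^1_\infsub(X;V)$. The paper does this (\Cref{prop:hyp_vanishing}) by first defining $g$ on boundaries via $g(\partial c)=f(c)$, using the linear homological isoperimetric inequality to see this is bounded in the $\ell^1$-norm on $B_1^{R_0}(X)$, extending to $C_1^{R_0}(X)$ via $1$-injectivity of $V$, and then extending to all of $C_1(X)$ by a path formula whose well-definedness and norm control again use hyperbolicity. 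None of this is automatic.

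For $(3)\Rightarrow(1)$ the argument is more delicate still. The paper first shows (\Cref{lemma:fif_step}, \Cref{lemma:extend_BC}) that the finite homological isoperimetric function lets one extend bounded functionals from $B_1^R(X)$ to $C_1^R(X)$ whenever $H^2_\infsub(X;V)=0$. It then uses the specific coefficient space $\ell^\infty(\mathbb N,\mathbb R)$ to package a sequence of real-valued functionals into one, yielding a \emph{uniform} extension bound (\Cref{lemma:uniform_extensions}); a duality argument then converts this into a homological linear isoperimetric inequality, and finally one must prove that such an inequality forces hyperbolicity (\Cref{prop:linear_isop_hyp}), which is itself a nontrivial geometric argument adapting the Bridson--Haefliger proof to real-coefficient chains. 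Your proposal contains none of these ideas, so as it stands there is no proof to evaluate.
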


Recall that a Banach space $V$ is 1-injective if, for every linear function $f\colon U\to V$ defined on a subspace $U$ of a normed vector space $W$, there exists a linear function $F\colon W\to V$ that extends $f$, with $\lvert F \rvert \le \lvert f \rvert$.
Typical examples are $V = \mathbb{R}$ with the usual Euclidean norm (by the Hahn-Banach theorem), and more generally spaces $V = \ell^\infty(S,\mathbb{R})$ of bounded functions from a set $S$ to the real numbers, endowed with the sup norm.

The rest of the section is devoted to proving the implications \eqref{item_hyp_main_hyp} $\implies$ \eqref{item_hyp_main_V} (\Cref{prop:hyp_vanishing}) and \eqref{item_hyp_main_univ} $\implies$ \eqref{item_hyp_main_hyp} (\Cref{prop:vanishing_hyp}), the remaining implication being straightforward.

\subsection{From hyperbolicity to cohomology}

We start by showing that hyperbolic graphs have vanishing $\ell^\infty$-cohomology:

\begin{prop}\label{prop:hyp_vanishing}
    Let $X$ be a connected graph.
    If $X$ is hyperbolic, then $\infcohom{X}{V}=0$ for all $1$-injective Banach spaces $V$.
    More precisely, for all $\delta \ge 0$, there exist an integer $R_0 \ge 0$ and a function $K\colon\mathbb{N}\times\mathbb R_{\ge 0}\to\mathbb R_{\ge 0}$ such that, if $X$ is $\delta$-hyperbolic, $V$ is $1$-injective, and $f\in Z^2_\infsub(X;V)$, then there exists $g\in C_\infsub^1(X;V)$ with $\cbd g=f$ and $|g|^R_\infty\leq K(R,|f|_\infty^R)$ for every integer $R \ge R_0$.
\end{prop}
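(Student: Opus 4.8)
The plan is to reformulate the statement as an extension problem for a linear functional on $C_1(X)$ and then feed in the linear isoperimetric inequality furnished by hyperbolicity.

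\textbf{Reduction.} Since $f$ is a cocycle, $f\circ\partial=0$ on $C_3(X)$, so $f$ vanishes on $B_2(X)=\partial C_3(X)$; by \Cref{rmk:ZisB} we have $Z_2(X)=B_2(X)$, so $f$ vanishes on all of $Z_2(X)$. Hence $f$ factors through $\partial$: there is a unique linear map $\bar f\colon B_1(X)\to V$ with $\bar f\circ\partial=f$ (well defined because two $2$-chains with the same boundary differ by a $2$-cycle). For $g\in C_\infsub^1(X;V)$ one has $\cbd g=g\circ\partial$, so $\cbd g=f$ is \emph{equivalent} to $g|_{B_1(X)}=\bar f$. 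Thus it suffices to produce a linear extension $g\colon C_1(X)\to V$ of $\bar f$ that is bounded on each $C_1^R(X)$, with the stated control on $\lvert g\rvert_\infty^R$.

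\textbf{Bounding $\bar f$ on $C_1^1(X)\cap B_1(X)$ — where hyperbolicity enters.} A $\delta$-hyperbolic graph admits a linear isoperimetric function, hence by \Cref{lemma:fif_fhif} a linear homological one; let $R_0=R_0(\delta)$ and $\theta_0(\ell)=c(\delta)\ell$ be its parameters (enlarging $R_0$ if needed). Given $b\in C_1^1(X)\cap B_1(X)$, \Cref{lemma:sum_path} with $T=\emptyset$ writes $b$ as a finite combination of path-chains $\chain{q_j}$ of closed paths and of terms $(x_k,x_k)$ (absorbing the remaining terms into length-$2$ closed paths), with $\sum\lvert\beta_j\rvert\length{q_j}+\sum\lvert\mu_k\rvert\le 3\norm b_1$. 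Since $\norm{\chain{q_j}}_F^{R_0}\le\theta_0(\length{q_j})$, $(x_k,x_k)=\partial(x_k,x_k,x_k)$, and $\theta_0$ is linear, this yields $\norm b_F^{R_0}\le C(\delta)\norm b_1$ for a constant $C(\delta)$. Choosing a filling $c\in C_2^{R_0}(X)$ of $b$ with $\norm c_1$ close to $\norm b_F^{R_0}$ gives $\norm{\bar f(b)}=\norm{f(c)}\le\lvert f\rvert_\infty^{R_0}\norm c_1$, so $\bar f$ restricted to $C_1^1(X)\cap B_1(X)$ has $\norm\cdot_1$-operator norm at most $C(\delta)\lvert f\rvert_\infty^{R_0}$. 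Linearity of the isoperimetric function is essential here: a superlinear one makes this operator norm infinite, as it must be for instance for $X=\mathbb{Z}^2$.

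\textbf{Construction and estimate.} By $1$-injectivity of $V$, extend $\bar f|_{C_1^1(X)\cap B_1(X)}$ to a linear map $\hat g\colon C_1^1(X)\to V$ with $\lvert\hat g\rvert_\infty^1\le C(\delta)\lvert f\rvert_\infty^{R_0}$. As $\bar f$ and $\hat g$ agree on $B_1(X)\cap C_1^1(X)$, the rule $u_1+u_2\mapsto\bar f(u_1)+\hat g(u_2)$, for $u_1\in B_1(X)$ and $u_2\in C_1^1(X)$, is a well-defined linear map on $B_1(X)+C_1^1(X)$; extend it linearly to $g\colon C_1(X)\to V$ (by \Cref{lemma:Rto1}, $B_1(X)+C_1^1(X)=C_1(X)$ when $X$ is connected, and in general the remaining choices affect no $\lvert g\rvert_\infty^R$ since they concern only infinite-distance pairs). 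By construction $g|_{B_1(X)}=\bar f$, so $\cbd g=f$. For the norm bound, fix vertices $x,y$ with $d(x,y)=r\le R$ and a geodesic path $[x,y]=(x=w_0,\dots,w_r=y)$. Each simplex $(x,w_{i-1},w_i)$ has diameter $i\le r$, so $\eta:=(x,x,x)-\sum_{i=1}^r(x,w_{i-1},w_i)\in C_2^r(X)$, with $\partial\eta=(x,y)-\chain{[x,y]}$ and $\chain{[x,y]}\in C_1^1(X)$; hence
\[ g\big((x,y)\big)=\bar f\big((x,y)-\chain{[x,y]}\big)+\hat g\big(\chain{[x,y]}\big)=f(\eta)+\sum_{i=1}^{r}\hat g\big((w_{i-1},w_i)\big). \]
Therefore $\norm{g((x,y))}\le(r+1)\lvert f\rvert_\infty^{r}+r\,\lvert\hat g\rvert_\infty^1\le\lvert f\rvert_\infty^R\big((R+1)+R\,C(\delta)\big)$ for all $R\ge R_0$, and taking the supremum over such pairs gives $\lvert g\rvert_\infty^R\le K(R,\lvert f\rvert_\infty^R)$ with $K(R,t)=t\big((R+1)+R\,C(\delta)\big)$.

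The only substantive step is the middle one: obtaining an $\norm\cdot_1$-bound on $\bar f$ over $C_1^1(X)\cap B_1(X)$. This genuinely requires the isoperimetric function to be linear, and it is also the point where one should check that \Cref{lemma:sum_path} and the filling estimates behave correctly with real, not merely integer, coefficients — which they do precisely because $\theta_0$ is linear. Everything else is formal homological algebra together with the elementary identity $\partial\eta=(x,y)-\chain{[x,y]}$.
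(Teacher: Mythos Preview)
Your proof is correct and follows essentially the same approach as the paper's: define $\bar f$ on $B_1(X)$ from the cocycle, bound it on a small subspace via the linear homological isoperimetric inequality, extend using $1$-injectivity, and then propagate to all of $C_1(X)$ via geodesic-path decompositions. The only cosmetic difference is that you extend from $C_1^1(X)\cap B_1(X)$ to $C_1^1(X)$ and then glue with $\bar f$ on $B_1(X)$ by linear algebra, whereas the paper extends from $B_1^{R_0}(X)$ to $C_1^{R_0}(X)$ and then writes down an explicit path formula whose well-definedness must be checked separately; your packaging is slightly cleaner but not substantively different.
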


The key to proving the proposition is the homological isoperimetric inequality given in the following lemma. 

\begin{lemma}\label{lemma:hyp_filling}
    For every $\delta \ge 0$ there exist $R_0 \in \mathbb{N}_{\ge 1}$ and $C \in \mathbb{R}_{\ge 0}$ such that the following hold for every $\delta$-hyperbolic graph $X$ and every $R \ge R_0$:
    \begin{itemize}
        \item If $b \in C_1^R(X) \cap B_1(X)$, then $b \in B_1^R(X)$;
        \item If $b \in B_1^{R}(X)$, then $\lvert b \rvert_F^{R} \le C\cdot R \cdot \lvert b \rvert_1$.
    \end{itemize}
\end{lemma}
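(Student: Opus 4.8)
The plan is to deduce this from Lemma \ref{lemma:fhif}, which already upgrades a finite homological isoperimetric function to filling inequalities of the required type, but with a possibly super-linear function $\theta$. So the real content is to show that a $\delta$-hyperbolic graph admits a finite homological isoperimetric function that is \emph{linear}, with constants depending only on $\delta$. This is the graph-level, homological incarnation of the classical fact that hyperbolic spaces have a linear isoperimetric function, and I expect to obtain it by the standard ``divide the disc into thin pieces'' argument adapted to the combinatorial/homological setting.

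Concretely, I would first record the easy direction: for any closed path $p$ of length $\ell$ in a $\delta$-hyperbolic graph, $\rarea{R_0}{p}$ is at most linear in $\ell$, for a suitable $R_0 = R_0(\delta)$. The cleanest route is the standard argument via geodesic bigons/triangles: fix a basepoint on $p$, for each vertex $x$ of $p$ choose a geodesic $\gamma_x$ from the basepoint to $x$; consecutive geodesics $\gamma_x$, $\gamma_{x'}$ (for consecutive vertices $x,x'$ on $p$) together with the edge $xx'$ fan out a thin triangle, which by $\delta$-thinness can be $R_0$-filled with $O(\ell_{\max})$ triangles where $\ell_{\max}$ is the length of the longest of the two geodesics; telescoping over the $\ell$ edges of $p$ and using that the $\gamma_x$ can be taken of length $\le \length p$ (or rather, one should feed in a ``combing''/divide-and-conquer so that the total area is $O(\ell)$ rather than $O(\ell^2)$). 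In fact the slickest version is the binary subdivision: connect the two endpoints of each sub-path of $p$ of length $2^k$ by a geodesic; each resulting ``rectangle'' between scale $2^k$ and $2^{k+1}$ geodesics is uniformly thin and $R_0$-fills with $O(2^k)$ triangles; summing over the $O(\ell/2^k)$ pieces at scale $k$ and over $k \le \log_2 \ell$ gives $O(\ell \log \ell)$ naively, but being slightly more careful (using that in a hyperbolic space the geodesic connecting the endpoints of a subpath of length $2^k$ stays within bounded distance of the concatenation of the two geodesics at scale $2^{k-1}$, so each such rectangle has \emph{area} $O(1)$ per unit length giving $O(2^k)$ total at level $k$, and the number of levels summed geometrically...) — here I should just cite or reproduce the standard linear isoperimetric inequality for hyperbolic spaces and transport it to this combinatorial setting via an $R_0$-filling; the translation is routine because a geodesic triangle with $\delta$-thin sides admits an $R_0$-filling with a number of $2$-simplices linear in its perimeter, for $R_0$ depending only on $\delta$.

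Having established that $X$ has a finite homological isoperimetric function with $\theta_0(\ell) = A\ell$ for $A = A(\delta)$ (first via Lemma \ref{lemma:fif_fhif} from the linear isoperimetric function, noting $\theta_0$ can be taken super-additive since it is linear), I apply Lemma \ref{lemma:fhif}: its conclusion \eqref{it:fif_fill_R} gives the first bullet immediately, and \eqref{it:fif_fill_norm} gives $\lvert b \rvert_F^R \le \theta(R \cdot \lvert b\rvert_1)$ with $\theta(s) = \theta_0(3s) + 5s = (3A+5)s$, i.e. $\lvert b \rvert_F^R \le (3A+5) R \lvert b\rvert_1$, which is exactly the second bullet with $C = 3A+5$. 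The only gap to patch is that Lemma \ref{lemma:fhif}\eqref{it:fif_fill_norm} is stated for $b$ with integer coefficients, whereas the lemma here asks for all $b \in B_1^R(X)$; but the filling norm $\lvert \cdot \rvert_F^R$ and the $\ell^1$-norm are both homogeneous, and a general real chain can be approximated — actually one does not even need approximation: since the desired inequality $\lvert b \rvert_F^R \le CR\lvert b\rvert_1$ is scale-invariant and both sides are continuous in $b$ with respect to $\lvert\cdot\rvert_1$ on the finite-dimensional-per-bounded-region pieces, it suffices to note that the integer case plus positive-homogeneity gives it for rational $b$, and density of rationals (together with the fact that $\lvert\cdot\rvert_F^R$ is Lipschitz-dominated by $(R+1)\lvert\cdot\rvert_1$, as in the proof of Lemma \ref{lemma:Rto1}) extends it to all real $b$.

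\textbf{Main obstacle.} The delicate point is getting a genuinely \emph{linear} bound $\theta_0(\ell) = O(\ell)$ in the first step, with the implied constant and $R_0$ depending only on $\delta$ and not on $X$; a careless fanning argument only yields $O(\ell^2)$. The fix is the standard divide-and-conquer/combing for hyperbolic spaces, and the one subtlety in porting it here is ensuring every intermediate triangle genuinely admits an \emph{$R_0$-filling} (a triangulated disc mapping in with $2$-simplices of diameter $\le R_0$) with controlled $\lvert\Delta\rvert$ — this is where $\delta$-thinness is used, together with the elementary observation that a triple of vertices pairwise within distance $R_0$ spans a single $2$-simplex in $C_2^{R_0}(X)$, so a $\delta$-thin triangle of perimeter $L$ can be ``zipped up'' by $O(L)$ such simplices.
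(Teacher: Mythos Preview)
Your plan is essentially the paper's proof: obtain a linear isoperimetric function $\theta_0(s)=C_0 s$ for $\delta$-hyperbolic graphs (the paper simply cites \cite[Chapter III.H, Proposition 2.7]{BH13} rather than sketching the argument), then feed it through Lemmas \ref{lemma:fif_fhif} and \ref{lemma:fhif} to get $C=3C_0+5$.

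The one substantive divergence is your passage from integer to real coefficients. You justify the density step by asserting that ``$\lvert\cdot\rvert_F^R$ is Lipschitz-dominated by $(R+1)\lvert\cdot\rvert_1$, as in the proof of Lemma \ref{lemma:Rto1}''. But Lemma \ref{lemma:Rto1} does not say this: it bounds $\lvert b\rvert_F^R$ for one particular boundary $b$ extracted from a given $c\in C_1^R(X)$, not for an arbitrary $b\in B_1^R(X)$. A uniform bound $\lvert b\rvert_F^R\le K\lvert b\rvert_1$ on $B_1^R(X)$ is exactly the inequality you are trying to prove, so as written this step is circular. (It is easily rescued: approximate a \emph{filling} $c$ of $b$ by rational $c_n$, so that $b_n=\partial c_n$ is rational with $\lvert b-b_n\rvert_F^R\le\lvert c-c_n\rvert_1\to 0$ and $\lvert b_n\rvert_1\to\lvert b\rvert_1$.) The paper sidesteps this entirely with a cleaner observation: in the proof of Lemma \ref{lemma:fhif} the integrality of the $\beta_i$ is used only to get
\[
\sum_i \lvert\beta_i\rvert\,\theta_0(\length{q_i}) \le \theta_0\Big(\sum_i \lvert\beta_i\rvert\,\length{q_i}\Big),
\]
and for linear $\theta_0$ this is an equality for arbitrary real $\beta_i$, so the proof of Lemma \ref{lemma:fhif} goes through verbatim without the integer hypothesis.
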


\begin{proof}
If $X$ is a $\delta$-hyperbolic graph, then $X$ has a \emph{linear} isoperimetric function \cite[Chapter III.H, Proposition 2.7]{BH13}, meaning that there exist $R_0 \ge 1$ and $\theta_0\colon\mathbb{N} \to \mathbb{R}$ as in \Cref{def:finte_isop_function}, with $\theta_0$ of the form $\theta_0(s) = C_0 \cdot s$ for some $C_0 \in \mathbb{R}_{\ge 0}$.
The numbers $R_0$ and $C_0$ can be chosen uniformly for all $\delta$-hyperbolic graphs, i.e. they depend only on $\delta$.

We now apply Lemmas \ref{lemma:fif_fhif} and \ref{lemma:fhif}; note that \eqref{it:fif_fill_R} and \eqref{it:fif_fill_norm} of \Cref{lemma:fhif} hold (for every $R \ge R_0$) with the function  $\theta$ given by $\theta(s) = \theta_0(3s)+5s = (3C_0+5) \cdot s$.
We set $C = 3C_0+5$.

For the inequality $\lvert b \rvert_F^R \le C\cdot R\cdot\lvert b\rvert_1$ to hold, which is \eqref{it:fif_fill_norm} from \Cref{lemma:fhif}, we do not need to assume that $b$ has integer coefficients.
In fact, in the proof of \Cref{lemma:fhif}, we needed the coefficients to be integers only to deduce that 
\[ \sum_{i=1}^m \lvert\beta_i\rvert \cdot \theta_0(\length{q_i}) \le \theta_0\left(\sum_{i=1}^m \lvert\beta_i\rvert \cdot\length{q_i}\right),\]
where $\beta_i$ and $q_i$ are as in \Cref{lemma:sum_path}, using the super-additivity of $\theta_0$.
But if $\theta_0(s) = C_0\cdot s$, this inequality holds regardless of the fact that the $\beta_i$ are integers.
\end{proof}

We are now ready to prove Proposition \ref{prop:hyp_vanishing}.

\begin{proof}[Proof of Proposition \ref{prop:hyp_vanishing}]
    Given a 2-cocycle $f \in Z_\infsub^2(X;V)$, define a linear map $h\colon B_1(X)\to V$, setting $h(b) = f(c)$ for any $c \in C_2(X;V)$ with $\bd c = b$.
    This is well defined: by \Cref{rmk:ZisB} any two such $c$ differ by a boundary, and $f$ vanishes on boundaries.
    
    Let $R$ be any natural number. If $b \in B_1^R(X)$, then for any $\varepsilon > 0$ we can write $b = \bd c$ for some $c \in C_1^R(X)$ with $\lvert c \rvert_1 \le \lvert b\rvert_F^R + \varepsilon$, which implies $\lVert h(b) \rVert = \lVert f(c)\rVert \le \lvert f\rvert_\infty^R \cdot \lvert c \rvert_1 \le \lvert f\rvert_\infty^R \cdot (\lvert b\rvert_F^R+\varepsilon)$; by letting $\varepsilon \to 0$ we get $\lVert h(b) \rVert \le \lvert f\rvert_\infty^R \cdot \lvert b\rvert_F^R$.
    In particular, for every $R$ the restriction of $h$ to $B_1^R(X)$ is bounded with respect to the $\lvert\cdot\rvert_F^R$ norm. 
    We wish to extend the map $h$ to $C_1(X)$, in such a way that for every $R$ the restriction to $C_1^R(X)$ is bounded with respect to the $\ell^1$-norm.
    
    Fixing the appropriate $R_0 \in \mathbb{N}$, we can apply the linear isoperimetric inequality of \Cref{lemma:hyp_filling}, which yields $\lvert\cdot\rvert_F^{R}\leq C\cdot R\cdot \lvert\cdot\rvert_1$ on $B_1^{R}(X)$ for every $R \ge R_0$.
    In particular, taking $R = R_0$, we obtain that $h$ is bounded with respect to the $\ell^1$-norm on $B_1^{R_0}(X)$.
    Since $V$ is $1$-injective, we can extend $h\colon B_1^{R_0}(X) \to V$ to a linear function $g:C^{R_0}_1(X)\to V$, with norm bounded above by $C\cdot R_0 \cdot \lvert f \rvert_\infty^{R_0}$.
    
    The linear extension of $g$ to the whole $C_1(X)$ is now forced, if we require that $g = h$ on $B_1(X)$.
    In fact, if $a,b \in \vertx{X}$ and there is a path $p$ in $X$, passing through vertices $p_0, \dots, p_m$, where $p_0 = a$ and $p_m = b$, we are forced to have
    \begin{equation}\label{eq:def_g}
        g(a, b) = \sum_{i=1}^m g(p_{i-1},p_i) - \sum_{i=1}^{m-1}f(p_0,p_i,p_{i+1}),
    \end{equation}
    because it must hold that $\cbd g = f$ (this condition is equivalent to $g$ being an extension of $h$).
    We already know, by the way we constructed $g$ on $C_1^{R_0}(X)$, that this formula holds whenever $a$ and $b$ are at distance $\le R_0$ apart; once we have proved that the value of the right-hand side does not depend on the path, we will actually have extended $g$ to the whole $C_1(X)$, by using the right-hand side of \eqref{eq:def_g} to \emph{define} $g(a,b)$.
    To simplify the notation, given a path $p$ as above, we set $\widehat{p} = \sum_{i=1}^{m-1}(p_0,p_i,p_{i+1}) \in C_2(X)$, so that the formula reads as $g(a,b) = g(\chain{p}) - f(\widehat{p})$.
    Let $q$ be another path, passing through vertices $q_0, \ldots, q_n$, going from $q_0 = a$ to $q_n = b$.
    Then, by \Cref{lemma:hyp_filling}, we have that $\chain{p}-\chain{q} \in B_1^{R_0}(X)$, so we can take $c \in C_2^{R_0}(X)$ with $\bd c = \chain{p}-\chain{q}$.
    Notice that also $\widehat{p}-\widehat{q}$ has boundary $\chain{p}-\chain{q}$, so $f(c) = f(\widehat{p}-\widehat{q})$.
    Therefore,
    \[ g(\chain{p}) - g(\chain{q}) = g(\chain{p}-\chain{q}) = f(c) = f(\widehat{p}-\widehat{q}) = f(\widehat{p}) - f(\widehat{q}),\]
    and the independence on the path is proved.
    
    We now check that $\cbd g = f$.
    Given $(a,b,c) \in \vertx{X}^3$, we have to prove that $f(a,b,c) = g(a,b) + g(b,c) - g(a,c)$.
    Take paths $p$ and $q$, passing through vertices $p_0,\dots,p_m$ and $q_0,\dots,q_n$ respectively, with $p_0 = a$, $p_m = b = q_0$ and $q_n = c$.
    As a path joining $a$ to $c$ we consider the composition of $p$ and $q$.
    Using these paths to express the values attained by $g$, proving the equality $f(a,b,c) = g(a,b) + g(b,c) - g(a,c)$ amounts to showing (expanding each of the summands $g(a,b)$, $g(b,c)$ and $g(a,c)$ using expression \eqref{eq:def_g}) that
    \[ f(p_0,q_0,q_n) = f(p_0,q_0,q_1) + \sum_{i=1}^{n-1}f(p_0,q_i,q_{i+1}) - f(q_0,q_i,q_{i+1}).\]
    But this holds because the right-hand side is the evaluation of $f$ at a chain whose boundary is equal to $\bd(p_0,q_0,q_n)$.
    
    Finally, if $R \ge R_0$ and $a,b\in\vertx{X}$ are at mutual distance $\le R$, from the expression \eqref{eq:def_g}, taking a path of minimal length from $a$ to $b$, it follows that
    \begin{align*}
      \lVert g(a,b) \rVert &\le R \cdot C \cdot R_0 \cdot \lvert f \rvert_\infty^{R_0} + (R-1)\cdot \lvert f \rvert_\infty^R\\
      &\le \lvert f \rvert_\infty^R \cdot R \cdot (C \cdot R_0 + 1).
    \end{align*}
    For the first inequality we have used that the terms on the right-hand side of \eqref{eq:boundary} satisfy $\lVert g(p_{i-1},p_i)\rVert \le \lvert g\rvert_\infty^{R_0} \le C\cdot R_0 \cdot \lvert f\rvert_\infty^{R_0}$ and $\lVert f(p_0,p_i,p_{i+1})\rVert \le \lvert f \rvert_\infty^R$.
    Hence, $\lvert g \rvert_\infty^R \le \lvert f \rvert_\infty^R \cdot R \cdot (C \cdot R_0 + 1)$, so we can take $K(x,y)=(C\cdot R_0+1)xy$.
\end{proof}

\subsection{From cohomology to hyperbolicity}
In order to prove hyperbolicity starting from vanishing of $\ell^\infty$-cohomology, we have to show that a graph satisfying a homological linear isoperimetric inequality is hyperbolic.
The connection of this result with various other well-known results from the literature of this type is explained below.

\begin{definition}\label{def:hom_linear_isop_ineq} Let $X$ be a graph. We say that $X$ satisfies a {\it homological linear isoperimetric inequality} if there exists a positive integer $R_0$ with the following property:
for every $R\ge R_0$ there exists $K_R \in \mathbb{R}_{\ge 0}$ such that $\lvert b \rvert_F^{R} \le K_R \cdot \lvert b\rvert_1$ for every $b \in B_1^{R}(X)$.
    \end{definition}

\begin{prop}
\label{prop:linear_isop_hyp}
    Let $X$ be a connected graph.
    If $X$ satisfies a homological linear isoperimetric inequality, then $X$ is hyperbolic.
\end{prop}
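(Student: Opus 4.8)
The plan is to derive hyperbolicity from a homological linear isoperimetric inequality by first upgrading it to a genuine (cellular) linear isoperimetric inequality for closed paths, and then invoking the classical fact that a linear isoperimetric function implies hyperbolicity. The main point is thus a passage from ``homological'' fillings (arbitrary $2$-chains in $C_2^R(X)$) to ``geometric'' fillings (triangulated discs mapping to $X$), which is the reverse direction of \Cref{lemma:fif_fhif}. First I would take a closed path $p$ in $X$. By hypothesis and \Cref{rmk:ZisB}, $\chain{p} \in C_1^1(X) \cap B_1(X) \subseteq B_1^{R_0}(X)$, so there exists $c \in C_2^{R_0}(X)$ with $\bd c = \chain{p}$ and $\norm{c}_1 \le K_{R_0}\cdot\length{p} + \varepsilon$; by a standard argument we may take $c$ with rational, hence (after clearing denominators and averaging, or by the integral version) essentially integral, coefficients at a bounded multiplicative cost. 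The task is to convert such a $2$-chain into an $R_0'$-filling of $p$ (for some slightly larger constant $R_0'$ depending only on $R_0$) whose number of triangles is linearly bounded in $\length{p}$.

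The heart of the argument is therefore a combinatorial lemma: \emph{a singular $2$-chain $c = \sum \alpha_i \sigma_i$ with integer coefficients, each $\sigma_i = (x_0^i,x_1^i,x_2^i)$ of diameter $\le R_0$, and with $\bd c = \chain{p}$, gives rise to a triangulated disc $(\Delta,\Phi)$ with $\Phi$ of the required diameter bound and $\norm{\Delta} \le \text{const}\cdot\norm{c}_1$.} I would build this by viewing $c$ as a formal sum of oriented triangles and pairing up their boundary edges: each interior edge $(u,v)$ appearing in $\bd c$ must cancel against a $(v,u)$ (possibly after subdividing high-multiplicity triangles into $|\alpha_i|$ parallel copies), and the pattern of cancellations lets one glue the triangles along edges into a surface with boundary $p$. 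One then caps off any higher-genus parts and extra boundary components (each bounding a subchain, fillable at linear cost, or collapsible) to obtain an actual disc; the diameter of the triangles is controlled since each came from a $\sigma_i$ of diameter $\le R_0$, and gluing does not increase it, while the degenerate triangles $(x,x,v)$ etc. that may appear are harmless (one can perturb or contract them). This is essentially the content of \cite[Theorem 6]{Min02} or the surface-realisation arguments in \cite{Ger96}, and I would cite one of those for the technical core rather than reprove it.

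Once every closed path $p$ satisfies $\rarea{R_0'}{p} \le \text{const}\cdot\length{p}$, i.e. $X$ has a linear isoperimetric function in the sense of \Cref{def:finte_isop_function}, hyperbolicity follows from \cite[Chapter III.H, Proposition 2.7]{BH13} (the converse direction to what was used in the proof of \Cref{lemma:hyp_filling}), possibly after noting that we may pass to a connected component and that bounded-diameter $R_0'$-fillings with linearly many triangles is exactly the hypothesis of that proposition.

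\textbf{Main obstacle.} The delicate step is the surface-realisation: turning an abstract integral $2$-chain filling $\chain{p}$ into an honest triangulated disc with a linear bound on the number of $2$-simplices. The subtleties are (i) cancelling interior edges consistently to glue triangles into a closed-up surface rather than a more complicated complex, (ii) handling degenerate simplices and ensuring one lands in a triangulation in the strict sense of \cite[Definition 2.1 in Chapter III.H]{BH13} (three distinct edges per $2$-cell), and (iii) killing extra handles and boundary circles without blowing up the triangle count beyond a linear factor. I expect to handle these by reducing to the cited combinatorial results and absorbing the constants into $R_0'$ and the linear coefficient.
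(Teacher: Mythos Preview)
Your plan has a genuine gap at the step you yourself flag as the main obstacle: converting a homological filling into a disc filling with only a linear loss. Gluing the triangles of an integral $2$-chain $c$ with $\partial c = \chain{p}$ along cancelling edges produces a surface, but in general one of positive genus and with several boundary components. To reduce to a disc you must cap the extra boundary circles and compress handles, and each such cap is a filling of a cycle whose length you only control by $\norm{c}_1$, not by $\length{p}$ in any useful inductive way. Applying the \emph{homological} inequality to those cycles gives you more $2$-chains, more surfaces, more genus, and the recursion does not obviously terminate with a linear bound. In fact the general phenomenon that homological Dehn functions can be strictly smaller than ordinary ones shows that ``chain filling $\Rightarrow$ disc filling with comparable area'' is not a formality; in the linear case the implication is true only \emph{a posteriori}, via hyperbolicity, which is precisely what you are trying to prove. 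Your citations do not rescue this: \cite[Theorem~6]{Min02} decomposes $1$-cycles into closed paths (it is what underlies \Cref{lemma:sum_path} here), not $2$-chains into discs, and \cite{Ger96} does not contain a surface-to-disc argument of the kind you need. A secondary issue is the passage from real to integral fillings: clearing denominators does not give a bounded multiplicative cost, and the ``averaging'' you allude to is not a standard move here.

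The paper takes a different route that avoids both problems. It adapts the geometric proof of \cite[III.H, Theorem~2.9]{BH13} directly to chains: assuming $X$ is not hyperbolic, one finds (for arbitrarily large $n$) a geodesic hexagon or quadrilateral $H$ with widely separated sides, takes an almost-optimal real $2$-chain $f$ with $\partial f = \chain{H}$, and slices $f$ into layers $f_i$ supported in concentric annuli around one side. The boundary $\partial f_i$ is then decomposed via \Cref{lemma:sum_path} and \Cref{lemma:Rto1} into paths joining two specified far-apart vertices, forcing $\norm{f_i}_1$ to be large; summing the layers contradicts the linear bound on $\norm{f}_1$. This argument never leaves the world of real chains and never needs to produce a triangulated disc, which is exactly what makes it work where your reduction stalls.
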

The proof of Proposition \ref{prop:linear_isop_hyp} is similar to the proof of \cite[Proposition~4.2]{KK21} where a variation of a homological linear isoperimetric  inequality on a graph is shown to imply hyperbolicity.
This is an adaptation of the proof of the corresponding fact for the usual isoperimetric inequality in \cite[Chapter III.H, Theorem 2.9]{BH13}. 

We warn the reader that the norm considered in \cite{KK21} is not the standard $\ell^1$-norm on a normed vector space $V$, since it counts the cardinality of the support of an element in $V$, forgetting the weights. Since we need to deal with real chains, simply counting the cardinality of the support of an element does not give a lower bound on its $\ell^1$-norm.
We will use \Cref{lemma:sum_path} to remedy this problem.

\begin{proof}[Proof of Proposition \ref{prop:linear_isop_hyp}] By \Cref{lemma:fhif}, there exists an integer $R_0\ge 1$ so that $C_1^1(X) \cap B_1(X)\subseteq B_1^{R_0}(X)$. We may suppose that $X$ satisfies a homological linear isoperimetric inequality (\Cref{def:hom_linear_isop_ineq}) for $R \ge R_0$.
Fix any $R \ge R_0$.

Denote by $\kappa=K_R$ the isoperimetric constant and set $k=12\kappa R^2+1$, $m=3\kappa R$. For simplicity, we take $\kappa$  to be an integer.

First, we observe that, from the very definition of $\partial\colon C_2(X) \to C_1(X)$,
\begin{equation}\label{eq:boundary}
3\lvert a \rvert_1 \geq \lvert \partial a\rvert_1
\end{equation}
for any 2-chain $a\in C^R_2(X)$.

By a way of contradiction, suppose $X$ is not hyperbolic.
There must be an arbitrarily large  $n>6k$ and a geodesic triangle $\Delta\subseteq X$ which is not $(n+1)$-slim.
This means that there is a vertex $v$ on one of the sides which does not lie in the $n$-neighbourhood of the other two sides. 
In the first part of the proof of \cite[Chapter III.H, Theorem 2.9]{BH13}, it is shown that by either `cutting the corners' or `cutting a corner and the opposite edge' of $\Delta\subset X$, one obtains a geodesic polygon $H$ in $X$ (see Figure \ref{fig:hexagons}) with the properties listed below (we treat the two cases separately).\\

\begin{figure}[ht]
    \centering
    \includegraphics[scale=1]{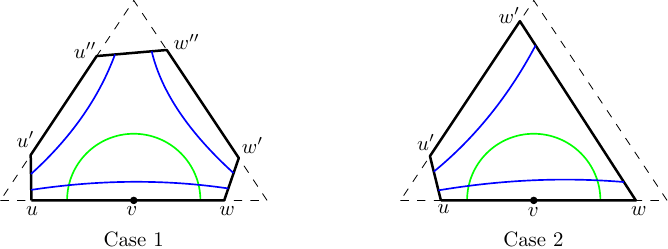}
    \caption{The triangle $\Delta$ and the polygon $H$.}
    \label{fig:hexagons}
\end{figure}

\noindent{}{\bf Case 1:} $H$ is a geodesic hexagon, with:
\begin{itemize}
    \item $d(u, u')=d(w,w')=d(u'',w'')=2k$;
    \item The $(k-1)$-neighbourhoods (blue in Figure \ref{fig:hexagons}) of the segments $[u,w]$, $[u',u'']$, and $[w',w'']$ are pairwise disjoint;
    \item $\exists v\in [u,w]$ such that the $(n-k+1)$-neighbourhood of $v$ (green in Figure \ref{fig:hexagons}) is disjoint from the $(k-1)$-neighbourhoods of $[u',u'']$, and $[w',w'']$.
\end{itemize}
Let $\alpha$, $\beta$, and $\gamma$ denote the lengths of $[u,w]$, $[u',u'']$, and $[w',w'']$, respectively.

Putting weight $1$ on every edge of  $H$, with the orientation obtained by travelling along $H$ so that $w$, $v$ and $u$ are visited in this order (clockwise in \Cref{fig:hexagons}), we get a cycle $h\in C^1_1(X)$.
By definition of the filling norm $\lvert \cdot \rvert_F^{R}$, there is a chain $f \in C^R_2(X)$ such that $\partial f=h$ and $\lvert f\rvert_1\leq \lvert h \rvert_F^{R}+1$. 

For $1\leq i\leq m$, let $u_i\in [u, u']$ and $w_i\in [w, w']$  be the vertices which are at distance $4Ri-2R$ from $u$ and $w$, respectively, and define the following sets of vertices:
\begin{align*}
A'_i =\ &\{x\in [u,u'] \; | \; 4R(i-1)\leq d(u, x) \leq 4Ri-2R \}\\
&\cup \{x\in [w,w'] \; | \; 4R(i-1)\leq d(w, x)\leq 4Ri-2R \},\\
A_i =\ &\{x\in [u,u'] \; | \; 4Ri-2R\leq d(u, x)\leq 4Ri \}\\
&\cup\{x\in [w,w'] \; | \; 4Ri-2R\leq d(w, x)\leq 4Ri \}.
\end{align*}

Denote by $B_i$ the neighbourhood of $[u,w]$ of radius $4Ri$ (i.e. the set of vertices at distance $\le 4Ri$ from vertices of $[u,w]$).
Since $4Ri\leq 4Rm=12\kappa R^2=k-1$, each $B_i$ is contained in the $(k-1)$-neighbourhood of $[u,w]$.

Let $f_i$ be the restriction of $f$ to the 2-simplices (triples of vertices) supported in $B_{i}\smallsetminus B_{i-1}$.
Since $\partial f=h$, it follows that $\partial f_i$ has weight 1 on the (head-tail pairs corresponding to) edges of $[u,u']\cup [w,w']$ incident to $u_i$ and $w_i$, because each such edge cannot be in the boundary of a 2-simplex which is \emph{not} contained in $B_{i}\smallsetminus B_{i-1}$.
Also, note that $\partial f_i$ splits as the sum of a 1-chain $\varphi'_i$ supported in 
$$\{R\mbox {-neighbourhood of } B_{i-1}\}\cup A'_i$$ 
and a 1-chain $\varphi_i$  supported in 
$$\{R\mbox {-neighbourhood of } \overline{X\smallsetminus B_{i}}\}\cup A_i,$$ 
 (see Figure \ref{fig:paths}).
 These two subsets of $\vertx{X}$ intersect at exactly $T:=\{u_{i}, w_{i}\}$, and by the above observation, 
$$\partial \varphi_i=w_{i}-u_{i}\;\; \mbox{ and } \;\; \partial \varphi'_i= u_{i}-w_{i}.$$

\begin{figure}[h] 
    \[
    \begin{tikzpicture}

\coordinate (A) at (-2, 0);          
\coordinate (B) at (2, 0);           
\coordinate (C) at (3, 2);          
\coordinate (D) at (1, 4);
\coordinate (E) at (-1, 4);
\coordinate (F) at (-3, 2);

        \draw [thick] (A) -- (B);
        \draw [thick] (C) -- (D);
        \draw [thick] (D) -- (E);
        \draw [thick] (E) -- (F);
        \draw [thick] (F) -- (-2.5,1);  
           \draw [thick] (A) -- (-2.1,.2);  
           \draw [thick, line width=1.2pt, red] (-2.5,1) -- (-2.3,.6);
            \draw [thick, line width=1.2pt, blue] (-2.1,.2) -- (-2.3,.6);
            \draw [thick, line width=1.2pt, red] (2.5,1) -- (2.3,.6);
            \draw [thick, line width=1.2pt, blue] (2.1,.2) -- (2.3,.6);
              \draw [thick] (B) -- (2.1,.2);           
         \draw [thick] (C)-- (2.5,1);           

\node [below left] at (A) {$u$};
\node [below right] at (B) {$w$};
\node [right] at (C) {$w'$};
\node [above right] at (D) {$w''$};
\node [above left] at (E) {$u''$};
\node [left] at (F) {$u'$};

\node [left] at (-2.3,.6) {$u_i$};

\node [right] at (2.3, .6) {$w_i$};

\node [left, blue] at (0.3,1.1) {$\varphi'_{i}$};
\node [left] at (0.5,.4) {$B_{i-1}$};
\node [left, red] at (0.3, 2) {$\varphi_{i}$};

\node at (-2.3,.6)  [circle,fill,inner sep=1pt]{};
\node at (2.3,.6)  [circle,fill,inner sep=1pt]{};

\draw [red, thick, line width=1.2pt, ] (2.5,1) to[bend right=30] (-2.5,1);
\draw [blue, thick,  line width=1.2pt, ] (2.1,.2) to[bend right=30] (-2.1,.2);

\end{tikzpicture}\]
\caption{The filling $f$ of $h$.}\label{fig:paths}   
\end{figure}
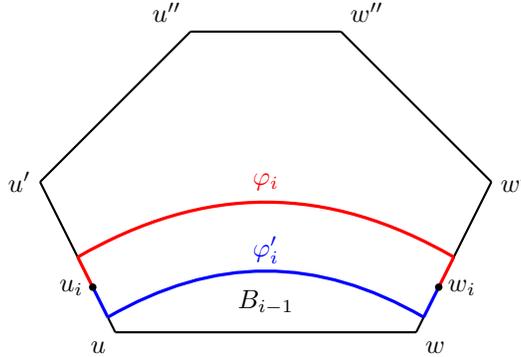

 By combining \Cref{lemma:sum_path} and \Cref{lemma:Rto1}, we get $\varphi_i=\sum_j\lambda_j\chain{p_j}\ + \ \sum\sigma_k$, where the summands $\sigma_k$ lie in $B_1(X)$, with \[\sum_j\lambda_j\cdot\length{p_j} \le R\cdot \norm{\varphi_i}_1,\]
and the paths $p_j$ have endpoints $u_i$ and $w_i$.
Since $\partial \varphi_i=w_{i}-u_{i}$, it follows that $\sum_j \norm{\lambda_j} \geq 1$.
Also, every $p_j$ must be of length at least $d(u_{i},w_{i})\geq \alpha-2k+2$.
Therefore, we obtain
  $$R \cdot \lvert \varphi_i\rvert_1 \ge \sum_j \norm{\lambda_j} \cdot\length{p_j}\geq \alpha-2k+2.$$
The same holds for $\lvert \varphi'_i\rvert_1$.
Hence, using (\ref{eq:boundary}), for $1\leq i\leq m$, we get
$$\lvert f_i \rvert_1 \geq \frac{1}{3}(\lvert \partial f_i\rvert_1) = \frac{1}{3}(\lvert \varphi_i\rvert_1+\lvert \varphi'_i\rvert_1) \geq \frac{2}{3R}(\alpha-2k+2).$$

Denote $f_{[u,w]}=\sum_{i=1}^{m} f_i$.
Since the 2-chains $f_i$ have disjoint supports, 
$$\lvert f_{[u,w]} \rvert_1=\sum_{i=1}^{m}\lvert f_i \rvert_1 \geq\frac{2m}{3R}(\alpha-2k+2)=2\kappa (\alpha-2k+2).$$

Similarly,
$$\lvert f_{[u', u'']} \rvert_1 \geq 2\kappa(\beta-2k+2),$$
$$\lvert f_{[w',w'']} \rvert_1 \geq 2\kappa(\gamma-2k+2).$$
Note that the 2-chains $f_{[u,w]}$ , $f_{[u',u'']}$, and $f_{[w',w'']}$ have pairwise disjoint supports,  since they are in $(k-1)$-neighbourhoods of the segments $[u,w]$, $[u',u'']$, and $[w',w'']$, respectively. Therefore, we have
$$\lvert f \rvert_1\geq 2\kappa(\alpha + \beta + \gamma -6k+6).$$
On the other hand, since $\lvert h \rvert_1=\alpha+\beta+\gamma +6k$, by the homological isoperimetric inequality and our choice of $p$, 
\[\kappa (\alpha+\beta+\gamma +6k)+1\geq \lvert f \rvert_1 \geq 2\kappa(\alpha + \beta + \gamma -6k+6).\]
This implies that $\alpha+\beta +\gamma\leq 18k$. Since $\alpha\geq 2n-4k$, this puts an upper bound on $n$ that only depends on $k$, leading to  a contradiction. 
\\

\noindent {\bf Case 2:} $H$ is a quadrilateral with
\begin{itemize}
    \item $d(u, u')=2k$, $d(w,w')=4k$;
    \item The $(k-1)$-neighbourhoods (blue in Figure \ref{fig:hexagons}) of the segments $[u,w]$ and $[u',w']$ are  disjoint;
    \item $\exists v\in [u,w]$ such the $(n-2k)$-neighbourhood of $v$ (green in Figure \ref{fig:hexagons}) is disjoint from the $(k-1)$-neighbourhoods of $[u', w']$.
\end{itemize}
Letting $\alpha=[u,w]$ and $\beta=[u',w']$ and arguing analogously, one obtains
$$\lvert f \rvert_1  \geq 2\kappa(\alpha + \beta  -4k+4).$$

Since $\lvert h \rvert_1 =\alpha +\beta + 6k$, the homological linear isoperimetric inequality leads again to a $k$-bound on $n$, which is a contradiction.
    \end{proof}

To shorten the notation, let us denote $\linfn :=\ell^{\infty}(\mathbb N, \R)$. We are now ready to complete our cohomological characterisation of hyperbolicity with the following proposition:

\begin{prop}
\label{prop:vanishing_hyp}
    Let $X$ be a connected graph having a finite homological isoperimetric function.
    If $\infcohom{X}{\linfn}=0$, then $X$ is hyperbolic.
\end{prop}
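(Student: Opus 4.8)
The plan is to upgrade the finite homological isoperimetric function to a \emph{linear} one by exploiting the vanishing of $H^2_\infsub(X;\linfn)$, and then invoke \Cref{prop:linear_isop_hyp}. So the goal is to show that $X$ satisfies a homological linear isoperimetric inequality in the sense of \Cref{def:hom_linear_isop_ineq}.

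First I would recall, via \Cref{lemma:fhif}, that for every $R\ge R_0$ and every $b\in B_1^R(X)$ with integer coefficients we already have a bound $|b|_F^R\le\theta(R\cdot|b|_1)$, where $\theta$ depends only on the given finite homological isoperimetric function; the point is to replace the possibly super-linear $\theta$ by a genuinely linear bound. The mechanism should be the standard ``quasi-linear implies linear'' trick adapted to this cohomological setting: assuming no linear bound holds, one produces, for each $R$ in some fixed range (it suffices to take a single $R=R_1\ge R_0$), a sequence of cycles $b_n\in B_1^{R_1}(X)$ with $|b_n|_1\to\infty$ and $|b_n|_F^{R_1}/|b_n|_1\to\infty$. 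From these one wants to build a $2$-cocycle $f\in Z^2_\infsub(X;\linfn)$ that is not a coboundary, contradicting $H^2_\infsub(X;\linfn)=0$. Concretely, one defines, roughly, a functional on $2$-chains by using the $n$-th coordinate of $\linfn$ to ``detect'' the bad filling of $b_n$: pick for each $b_n$ a vertex-base-point and a cone/path structure so that the coordinate functionals separate the chain $c_n$ filling $b_n$ inefficiently from all efficient fillings. The key is that any $g\in C^1_\infsub(X;\linfn)$ with $\cbd g = f$ would, by the argument in the proof of \Cref{prop:hyp_vanishing} run in reverse (using that $g$ is bounded on each $C_1^{R_1}(X)$ and that $f=\cbd g$ forces $g$ to evaluate cycles via fillings), give a uniform filling bound $|b_n|_F^{R_1}\le \mathrm{const}\cdot|b_n|_1$ — the contradiction. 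One has to be a little careful that the $b_n$ might need to be taken with disjoint or at least ``coordinate-separated'' supports, or else one passes to a diagonal/subsequence argument so that the single coefficient space $\linfn$ can host all the obstructions simultaneously; the finite homological isoperimetric hypothesis guarantees each $b_n$ has \emph{some} filling, so the construction is never vacuous.

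Assembling the pieces: (1) use \Cref{lemma:fhif} to fix $R_0$ and reduce to controlling $|b|_F^{R_1}$ for a single large $R_1$ on integer cycles; (2) suppose for contradiction the ratio $|b_n|_F^{R_1}/|b_n|_1$ is unbounded, extract a suitable sequence $b_n$ with rapidly growing filling-norm-to-$\ell^1$-norm ratio, normalising so that, say, $|b_n|_1\le 2^n$ while $|b_n|_F^{R_1}\ge n\cdot 2^n$; (3) for each $n$ choose a minimal-norm actual filling and use the failure of a better filling to design the $n$-th coordinate of a cocycle $f$, checking $f\in Z^2_\infsub(X;\linfn)$ — i.e.\ $|f|_\infty^R<\infty$ for every $R$ — which follows because on $C_2^R(X)$ only finitely many coordinates can be large once the growth rates are chosen with enough slack; (4) derive from $f=\cbd g$ and boundedness of $g$ on $C_1^{R_1}(X)$ a uniform linear filling bound for the $b_n$, contradicting (2); (5) conclude $X$ satisfies a homological linear isoperimetric inequality and apply \Cref{prop:linear_isop_hyp} to finish.

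The main obstacle I expect is step (3)–(4): making the cocycle $f$ genuinely lie in $C^2_\infsub(X;\linfn)$ (bounded on each $C_2^R$) while still being non-trivial in cohomology, and then extracting the right quantitative consequence from $f=\cbd g$. This is essentially a duality statement — non-existence of uniformly linear fillings dualizes to existence of a bounded functional on cycles that does not extend boundedly to chains — so the cleanest route may be to phrase it as: the quotient seminorm on $B_1^{R_1}(X)/\!\sim$ induced by $|\cdot|_F^{R_1}$ is not dominated by the $\ell^1$-norm, hence by Hahn–Banach there is a linear functional, bounded in $|\cdot|_F^{R_1}$ but unbounded in $\ell^1$, which one promotes to the desired cocycle with values in a suitable $\ell^\infty$ space; one then transfers back to $\linfn=\ell^\infty(\mathbb N,\R)$ using separability/countability of the relevant data (countably many $b_n$). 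Handling the bookkeeping so that the single countable coordinate set suffices, and that $f$ really is a cocycle and not merely a functional on cycles, is where the technical care is needed.
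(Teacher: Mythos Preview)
Your overall plan --- upgrade the finite homological isoperimetric function to a linear one via duality, then apply \Cref{prop:linear_isop_hyp} --- matches the paper exactly, and your final paragraph is close to the paper's actual argument. But there is one genuine gap, which is precisely the step you flag yourself.

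The issue is the promotion of a functional on $B_1^{R_1}(X)$, bounded in $|\cdot|_F^{R_1}$, to a cocycle $f\in Z^2_\infsub(X;\linfn)$. A $2$-cocycle must be defined on \emph{all} of $C_2(X)$, so you need $f(c)=g(\bd c)$ with $g$ defined on all of $B_1(X)$, and for $f$ to land in $C^2_\infsub$ you need $g$ bounded in $|\cdot|_F^R$ for \emph{every} $R$, not just $R_1$. Your sketch does not explain how to achieve this, and the ad hoc construction in your step~(3) (designing coordinates from bad fillings of the $b_n$) does not obviously produce something bounded on $C_2^R(X)$ for $R>R_1$. The paper resolves this with a separate lemma showing $|b|_F^R\le K_R\,|b|_F^{R+1}$ for all $R\ge R_0$ (proved by subdividing long sides of triples using the finite homological isoperimetric function). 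With this in hand, $1$-injectivity of $\linfn$ lets one extend $g$ from $B_1^{R_1}$ to $B_1^{R_1+1}$ to $\dots$ to all of $B_1$, with control at every scale, and then $f=g\circ\bd$ is a genuine element of $Z^2_\infsub$.

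One more difference worth noting: rather than building a single non-exact cocycle directly from bad cycles $b_n$, the paper works entirely on the dual side. It first shows (using the extension above and the vanishing hypothesis) that every bounded $g\colon B_1^R(X)\to\linfn$ extends boundedly to $C_1^R(X)$; then bundles a hypothetical bad sequence of $\R$-valued functionals $g_i$ into one $\linfn$-valued $g$ to get a \emph{uniform} extension constant for $\R$-valued functionals; and finally invokes a cited duality result of Gersten to conclude that the inclusion $B_1^R(X)\hookrightarrow C_1^R(X)$ is undistorted, i.e.\ $|\cdot|_F^R\le C\,|\cdot|_1$. This sidesteps entirely the bookkeeping about disjoint supports and growth rates in your steps (2)--(3).
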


The proof involves combining the isoperimetric characterisation of hyperbolicity from Proposition \ref{prop:linear_isop_hyp} with the following lemmas. Roughly, in Lemma \ref{lemma:extend_BC} we would like to extend functionals on $B_1^R(X)$ to functionals on $C_1^R(X)$. In order to do so, we want to first extend to $B_1(X)$ and then use the vanishing of cohomology. Since we assume that the coefficient Banach space is 1-injective, to perform the first extension it turns out that we only need the control on norms given by the following lemma.

\begin{lemma}\label{lemma:fif_step}
    Let $X$ be a graph having a finite homological isoperimetric function.
    Then, there exists a positive integer $R_0$, and constants $K_R \in \mathbb{R}_{\ge 0}$, for $R \ge R_0$, such that, if $b \in B_1^R(X)$, then $\lvert b \rvert_F^{R} \le K_R \cdot \lvert b \rvert_F^{R+1}$.
\end{lemma}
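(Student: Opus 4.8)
The plan is to reduce a filling of $b$ that uses $(R+1)$-small simplices to one using $R$-small simplices, at the cost of a bounded multiplicative constant, by subdividing each offending $2$-simplex. Let $R_0$ be large enough that Lemma \ref{lemma:fhif} applies, and fix $R \ge R_0$. Given $b \in B_1^R(X)$, pick an almost-optimal filling $c = \sum_k \alpha_k (x_k^0,x_k^1,x_k^2) \in C_2^{R+1}(X)$ with $\bd c = b$ and $\norm{c}_1 \le \norm{b}_F^{R+1} + 1$. (If $\norm{b}_F^{R+1} = \infty$ there is nothing to prove, so assume it is finite.) The idea is to replace each simplex $(x^0,x^1,x^2)$ appearing in $c$ by a $2$-chain in $C_2^R(X)$ with the same boundary and with controlled $\ell^1$-norm.

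The key step: each individual $2$-simplex $\sigma = (x^0,x^1,x^2)$ of diameter $\le R+1$ has boundary $\chain{\partial\sigma}$, which is (up to the harmless $(y,z)+(z,y)$ correction terms of \Cref{lemma:sum_path}) the chain associated to a closed path of length $3$ whose vertices are pairwise at distance $\le R+1$. Actually it is cleaner to argue directly. Since $d(x^0,x^1) \le R+1$, $d(x^1,x^2)\le R+1$, $d(x^0,x^2)\le R+1$, we may write $\bd\sigma = (x^0,x^1) + (x^1,x^2) - (x^0,x^2) \in C_1^{R+1}(X)$, and then $\bd\sigma - \bd\sigma = 0$ is a cycle in $C_1^{R+1}(X)$; but what we really want is to fill $\bd\sigma$ itself by an $R$-small $2$-chain. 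For this, apply Lemma \ref{lemma:fhif}(\ref{it:fif_fill_R}): $\bd\sigma \in C_1^{R+1}(X) \cap B_1(X)$, so in fact $\bd\sigma \in B_1^{R+1}(X)$, which is not yet what we need. The honest fix is to first move down to diameter $1$: apply \Cref{lemma:Rto1} to each pair $(x^i,x^j)$, writing it as a geodesic path $\chain{p_{ij}}$ plus an explicit boundary of a $2$-chain supported on triples through $x^i$, each such triple having diameter $\le R+1$; these geodesic segments of length $\le R+1$ together form a closed path $q_\sigma$ of length $\le 3(R+1)$, and $\chain{q_\sigma} \in B_1^{R_0}(X) \subseteq B_1^R(X)$ by \Cref{lemma:fhif} with $\ell^1$-filling norm bounded by $\theta(3(R+1))$. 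Assembling, we get $\sigma$ rewritten (at the chain level, fixed boundary) as an element of $C_2^{R+1}(X)$ — still diameter $R+1$, not $R$! So this route stalls at the same place.

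The correct approach is therefore a coning/subdivision that strictly reduces diameter. Given $\sigma = (x^0,x^1,x^2)$ of diameter $d \le R+1$ with $d \ge 2$, pick a vertex $o$ on a geodesic realising the diameter, roughly at its midpoint, so that $d(o, x^i) \le \lceil (R+1)/2\rceil + R_0' $ for a fixed constant — more simply, replace $\sigma$ by the cone $\sum_{i} \pm(o, x^i, x^{i+1})$ over $\bd\sigma$ together with correction simplices coming from geodesics $x^i \rightsquigarrow o$: each resulting simplex has diameter strictly less than $d$ once we also subdivide those geodesics at unit scale via \Cref{lemma:Rto1}. Iterating this at most $\log_2(R+1)$ times brings every simplex down to diameter $\le R$, and since each subdivision step multiplies the $\ell^1$-norm by at most a constant depending only on $R$ (a bounded number of new simplices per old simplex, with bounded weights), the final $R$-small filling $c'$ satisfies $\norm{c'}_1 \le K_R' \cdot \norm{c}_1 \le K_R' \cdot (\norm{b}_F^{R+1}+1)$. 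Absorbing the $+1$ (we may assume $\norm{b}_F^{R+1}\ge$ some fixed positive lower bound when $b\ne 0$, or treat $b=0$ separately) gives $\norm{b}_F^R \le K_R\norm{b}_F^{R+1}$, as desired.

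The main obstacle is exactly this diameter-reduction step: naive fillings (coning over $b$ directly, or invoking the finite homological isoperimetric function on the boundary of a single simplex) produce $2$-chains that are bounded in $\ell^1$-norm but of diameter roughly $R+1$ or larger, not $\le R$. One must genuinely exploit that a simplex of diameter $R+1 \ge 2$ can be cut by a point near its "center" into pieces of strictly smaller diameter, and control that only a constant number of auxiliary simplices (depending on $R$) are introduced at each of the $O(\log R)$ stages. Keeping the bookkeeping of the correction terms from \Cref{lemma:Rto1} and \Cref{lemma:sum_path} under control while doing this is the technical heart of the argument.
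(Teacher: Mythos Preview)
Your iterative coning approach has a genuine gap: coning a simplex $\sigma = (x^0,x^1,x^2)$ over a midpoint $o$ of its longest edge does \emph{not} reduce diameter. The cone over $\partial\sigma$ from $o$ consists of the three simplices $(o,x^0,x^1)$, $(o,x^1,x^2)$, $(o,x^0,x^2)$, and the first of these still contains the pair $(x^0,x^1)$ at distance $R+1$. No amount of subdividing the geodesics from $o$ to the $x^i$ helps, because the long edge $(x^0,x^1)$ survives untouched. So the recursion never gets started. (There is also a minor issue with ``absorbing the $+1$'': the filling norm $\norm{b}_F^{R+1}$ has no positive lower bound for nonzero $b$, since you can scale; one should take $\norm{c}_1 \le \norm{b}_F^{R+1} + \varepsilon$ and let $\varepsilon \to 0$.)

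The paper's argument is close to your first abandoned attempt, with one additional idea you missed. For each edge $(x,y)$ of length exactly $R+1$ appearing in $c$, fix once and for all a midpoint $v_{xy}$ at distance $\le R$ from both endpoints, and set $(x,y)^R = (x,v_{xy}) + (v_{xy},y) \in C_1^R(X)$; for short edges set $(x,y)^R = (x,y)$. Now replace each simplex $(x^0,x^1,x^2)$ by any chain in $C_2^R(X)$ with boundary $(x^0,x^1)^R + (x^1,x^2)^R - (x^0,x^2)^R$; this boundary lies in $C_1^R(X)$, has integer coefficients and $\ell^1$-norm $\le 6$, so \Cref{lemma:fhif} fills it with norm $\le \theta(6R)$. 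The key point is that the resulting $2$-chain $c^R$ still has $\partial c^R = b$: since $b \in C_1^R(X)$, every long edge in $c$ must cancel against another copy of itself in $\partial c$, and because you chose the \emph{same} subdivision vertex $v_{xy}$ everywhere, the subdivided versions cancel identically. You were one observation away from this in your first paragraph before switching to the coning scheme.
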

\begin{proof}
    Let $R_0$ and $\theta\colon\mathbb N \to \mathbb{R}_{\ge 0}$ be as in \Cref{lemma:fhif}.
    Let $b \in B_1^R(X)$, with $R \ge R_0$, and let $c \in C_2^{R+1}(X)$ be a filling of $b$.
    By definition, $c$ is a linear combination of triples $(x_0,x_1,x_2) \in \vertx{X}^3$ of diameter at most $R+1$.
    Every such triple has three sides, $(x_0,x_1), (x_0,x_2)$ and $(x_1,x_2)$; we call a side \emph{long} if its two vertices are at distance $R+1$, otherwise we call it \emph{short}.
    Notice that $b = \bd c$ is a linear combination of the sides of the triples in $c$, and since $b \in C_1^R(X)$, all long sides get simplified.

    For every long side $(x,y)$, we fix an intermediate vertex $v_{xy} \in \vertx{X}$ with distance at most $R$ from both $x$ and $y$, and define the \emph{subdivision} of $(x,y)$ as $(x,y)^R = (x,v_{xy}) + (v_{xy},y) \in C_1^R(X)$.
    If $(x,y)$ is short, we simply set $(x,y)^R = (x,y)$.
    Notice that in both cases $\lvert(x,y)^R\rvert_1 \le 2$.

    Fix any $\varepsilon > 0$. We substitute every triple $(x_0,x_1,x_2)$ occurring in $c$ with a chain  $(x_0,x_1,x_2)^R \in C_2^R(X)$ with $\bd (x_0,x_1,x_2)^R = (x_0,x_1)^R + (x_1,x_2)^R - (x_0,x_2)^R$ and $\lvert (x_0,x_1,x_2)^R \rvert_1 \le \theta(6R) + \varepsilon$, where we are using property \eqref{it:fif_fill_norm} of \Cref{lemma:fhif} and the definition of the filling norm.
    This procedure gives as a result a new $2$-chain $c^R \in C_1^R(X)$ with $\bd c^R = b$ and $\lvert c^R \rvert_1 \le (\theta(6R) + \varepsilon)\lvert c\rvert_1$.
    
    Letting $\varepsilon \to 0$ and $\lvert c \rvert_1 \to \lvert b\rvert_F^{R+1}$, we obtain that $\lvert b\rvert_F^R \le \theta(6R)\cdot\lvert b\rvert_F^{R+1}$.
\end{proof}

\begin{lemma}\label{lemma:extend_BC}
    Let $X$ be a graph with a finite homological isoperimetric function.
    Then, there is a positive integer $R_0$ such that the following holds for any $1$-injective Banach space $V$ and every $R \ge R_0$:
    if $\infcohom{X}{V}=0$, then any bounded linear function $g\colon B_1^R(X) \to V$ can be extended to a bounded linear function on $C_1^R(X)$.
\end{lemma}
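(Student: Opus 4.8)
The plan is to reduce the extension problem on $B_1^R(X)$ to an extension problem on $B_1(X)$, where the vanishing of cohomology applies, and to control norms along the way using the finite homological isoperimetric function. So the first step is to take $R_0$ large enough that all previous lemmas (in particular \Cref{lemma:fhif} and \Cref{lemma:fif_step}) apply, fix $R \ge R_0$, and start from a bounded linear function $g\colon B_1^R(X) \to V$. Here "bounded" means bounded with respect to the $\ell^1$-norm on $B_1^R(X)$; I would first observe, via \Cref{lemma:fhif}\eqref{it:fif_fill_norm} (and the trivial inequality $\lvert\cdot\rvert_F^R \ge \lvert\cdot\rvert_1$ on $B_1^R(X)$, using that a filling has $\ell^1$-norm at least a third of its boundary's, cf.\ \eqref{eq:boundary}), that $\ell^1$-boundedness of $g$ is equivalent to $\lvert\cdot\rvert_F^R$-boundedness of $g$, so I may freely switch between the two.

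The second step is to extend $g$ from $B_1^R(X)$ to all of $B_1(X)$. By \Cref{rmk:ZisB}, every cycle in $C_1(X)$ is a boundary, so $B_1(X) = Z_1(X)$; and by \Cref{lemma:fhif}\eqref{it:fif_fill_R}, for each $R' \ge R_0$ we have $B_1^{R'}(X) \cap B_1(X) = B_1^{R'}(X)$, with the $\lvert\cdot\rvert_F^{R'}$-norm on each. I want a linear function $\tilde g\colon B_1(X) \to V$ agreeing with $g$ on $B_1^R(X)$ and such that $\tilde g|_{B_1^{R'}(X)}$ is $\lvert\cdot\rvert_F^{R'}$-bounded for every $R' \ge R_0$. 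Since $V$ is $1$-injective, Hahn--Banach-type extension is available one step at a time: given an extension to $B_1^{R'}(X)$ which is $\lvert\cdot\rvert_F^{R'}$-bounded, I want to extend it to $B_1^{R'+1}(X)$ while controlling the $\lvert\cdot\rvert_F^{R'+1}$-norm. The key estimate making this work is exactly \Cref{lemma:fif_step}: $\lvert b \rvert_F^{R'} \le K_{R'} \cdot \lvert b\rvert_F^{R'+1}$ for $b \in B_1^{R'}(X)$. This means the inclusion $(B_1^{R'}(X), \lvert\cdot\rvert_F^{R'+1}) \hookrightarrow (B_1^{R'}(X), \lvert\cdot\rvert_F^{R'})$ is bounded, so a $\lvert\cdot\rvert_F^{R'}$-bounded functional on the source is $\lvert\cdot\rvert_F^{R'+1}$-bounded, and $1$-injectivity of $V$ lets me extend it to a $\lvert\cdot\rvert_F^{R'+1}$-bounded functional on $B_1^{R'+1}(X)$. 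Iterating over $R' = R, R+1, R+2, \dots$ and taking the union (the $B_1^{R'}(X)$ exhaust $B_1(X)$ as $R' \to \infty$) yields $\tilde g\colon B_1(X) \to V$ which is $\lvert\cdot\rvert_F^{R'}$-bounded on each $B_1^{R'}(X)$, hence (by the norm equivalence above, or by \Cref{lemma:fhif}\eqref{it:fif_fill_norm}) $\ell^1$-bounded on each $B_1^{R'}(X)$.

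The third step is to use $\tilde g$ to build a $2$-cochain and invoke vanishing of cohomology. Define a linear functional $f$ on $C_2(X)$ by $f(c) = \tilde g(\partial c)$; this is a genuine $2$-cocycle (since $\partial^2 = 0$, $\delta f(c) = f(\partial c) = \tilde g(\partial \partial c) = 0$), and it lies in $C^2_\infsub(X;V)$ because $\partial$ maps $C_2^{R'}(X)$ into $B_1^{3R'}(X)$ with $\lvert\partial c\rvert_1 \le 3\lvert c\rvert_1$ (cf.\ \eqref{eq:boundary}), and $\tilde g$ is $\ell^1$-bounded on $B_1^{3R'}(X)$. Since $\infcohom{X}{V} = 0$, there is $h \in C^1_\infsub(X;V)$ with $\delta h = f$, i.e.\ $h(\partial c) = \tilde g(\partial c)$ for all $c \in C_2(X)$; equivalently $h = \tilde g$ on $B_1(X) = \partial C_2(X)$. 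Now $h|_{C_1^R(X)}$ is $\ell^1$-bounded by definition of $C^1_\infsub$, and $h|_{B_1^R(X)} = \tilde g|_{B_1^R(X)} = g$. Thus $h|_{C_1^R(X)}$ is the desired bounded linear extension of $g$ to $C_1^R(X)$, completing the proof.

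I expect the main obstacle to be the bookkeeping in the second step: making sure the one-step extensions are carried out so that the resulting functions on the nested spaces $B_1^R(X) \subseteq B_1^{R+1}(X) \subseteq \cdots$ are genuinely compatible (so their union is well-defined and linear on $B_1(X)$), and verifying carefully that the norm bounds obtained, which are a priori with respect to the various filling norms $\lvert\cdot\rvert_F^{R'}$, translate into the $\ell^1$-boundedness on each $C_1^{R'}(X) \supseteq B_1^{R'}(X)$ needed for $f = \tilde g \circ \partial$ to lie in $C^2_\infsub(X;V)$. The interplay between $\lvert\cdot\rvert_F^{R'}$ and $\lvert\cdot\rvert_1$, mediated by \Cref{lemma:fhif} and \eqref{eq:boundary}, and the bound $\partial(C_2^{R'}) \subseteq B_1^{3R'}$, is where one has to be attentive; none of the individual estimates is deep, but they must be chained correctly.
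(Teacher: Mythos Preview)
Your approach is essentially identical to the paper's: extend $g$ stepwise from $B_1^R(X)$ to $B_1(X)$ using \Cref{lemma:fif_step} and $1$-injectivity of $V$, pull back via $\partial$ to obtain a $2$-cocycle in $Z^2_\infsub(X;V)$, and invoke $\infcohom{X}{V}=0$ to produce a primitive in $C^1_\infsub(X;V)$, which restricts to the desired extension on $C_1^R(X)$.

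There is, however, a small but genuine gap in your execution. You assert that $\ell^1$-boundedness and $\lvert\cdot\rvert_F^R$-boundedness of linear maps $B_1^R(X)\to V$ are equivalent, citing \Cref{lemma:fhif}\eqref{it:fif_fill_norm} and the inequality $\lvert b\rvert_1 \le 3\lvert b\rvert_F^R$. The latter indeed gives one direction ($\ell^1$-bounded $\Rightarrow$ $\lvert\cdot\rvert_F^R$-bounded), but \Cref{lemma:fhif}\eqref{it:fif_fill_norm} only yields $\lvert b\rvert_F^R \le \theta(R\lvert b\rvert_1)$ with $\theta$ in general nonlinear and only for integer chains, which does \emph{not} give the reverse implication. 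You then rely on precisely that reverse implication in your third step, to conclude that $\tilde g$ is $\ell^1$-bounded on each $B_1^{R'}(X)$ and hence that $f = \tilde g\circ\partial$ lies in $C^2_\infsub(X;V)$.

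The repair is immediate and is exactly what the paper does: bypass $\ell^1$-boundedness of $\tilde g$ entirely. For $c\in C_2^{R'}(X)$ one has $\partial c \in B_1^{R'}(X)$ (not $B_1^{3R'}(X)$, since taking faces does not enlarge diameters), and $c$ is itself a filling of $\partial c$, so $\lvert\partial c\rvert_F^{R'} \le \lvert c\rvert_1$. Hence $\lVert f(c)\rVert = \lVert\tilde g(\partial c)\rVert \le \lvert\tilde g\rvert_F^{R'}\cdot\lvert c\rvert_1$, which shows $f\in Z^2_\infsub(X;V)$ directly from the filling-norm bounds you already established in Step~2. With this adjustment your Step~1 discussion of norm equivalence becomes unnecessary (you only need the trivial direction $\ell^1$-bounded $\Rightarrow$ $\lvert\cdot\rvert_F^R$-bounded to start the induction), and the proof goes through.
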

\begin{proof}
    Take $R_0$ as in \Cref{lemma:fif_step}: if $R \ge R_0$, we have $\lvert\cdot\rvert^R_F\leq K_R \lvert\cdot\rvert^{R+1}_F$ as norms on $B_1^R(X)$.
    This implies that $g\colon B_1^R(X) \to V$ can be extended to $B_1^{R+1}(X)$, with $\lvert g \rvert_F^{R+1} \le K_R \cdot \lvert g \rvert_F^R$.
    Performing these extensions inductively, we get an extension on $B_1(X)$, with $\lvert g \rvert_F^R < +\infty$ for every $R \ge R_0$.

    Now define $f\colon C_2(X) \to V$, setting $f(c) = g(\bd c)$ for every $c \in C_2(X)$.
    It is clear that $\cbd f = 0$.
    Moreover, if $c \in C_2^R(X)$, with $R \ge R_0$, then $\lVert f(c) \rVert \le \lvert g \rvert_F^R \cdot \lvert \bd c \rvert_F^R \le \lvert g \rvert_F^R \cdot \lvert c \rvert_1$.
    This means that $f \in Z_\infsub^2(X;V)$.
    The assumption $\infcohom{X}{V}=0$ implies that $f$ has a primitive in $C_\infsub^1(X;V)$.
    This primitive is an extension of $g$ on $C_1(X)$ and is bounded on $C_1^R(X)$.
\end{proof}

\begin{lemma}\label{lemma:uniform_extensions}
  Let $X$ be a graph with a finite homological isoperimetric function, and suppose that $H^2_\infsub(X;\linfn) = 0$.
  Then for every large enough natural number $R$, there exists $K \in \mathbb{R}_{\ge 0}$ such that
  any bounded linear function $g\colon B_1^R(X) \to \mathbb{R}$ can be extended to a bounded linear function $G\colon C_1^R(X)\to \mathbb{R}$ with $\norm{G}_\infty^R \le K\cdot \norm{g}_F^R$.
\end{lemma}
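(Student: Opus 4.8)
The plan is to run the standard ``gluing into $\linfn$'' trick: one packages a would-be counterexample sequence of real-valued functionals into a single $\linfn$-valued functional, extends it using the hypothesis $\infcohom{X}{\linfn}=0$, and reads off a uniform bound from the coordinates. First I would let $R_0$ be the threshold furnished by \Cref{lemma:extend_BC} and prove that the conclusion holds for every fixed $R \ge R_0$ — this is what ``every large enough $R$'' means.

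So fix $R \ge R_0$ and argue by contradiction: suppose no finite $K$ has the required property. Then for each $n \in \mathbb N$ there is a bounded linear functional $g_n\colon B_1^R(X)\to\mathbb R$ such that every bounded linear extension $G\colon C_1^R(X)\to\mathbb R$ of $g_n$ satisfies $\norm{G}_\infty^R > n\cdot\norm{g_n}_F^R$. Necessarily $g_n \ne 0$ (else the zero functional is an extension of norm $0$), so after rescaling I may assume $\norm{g_n}_F^R = 1$; hence every bounded linear extension of $g_n$ to $C_1^R(X)$ has operator norm strictly greater than $n$. Now define $g\colon B_1^R(X)\to\linfn$ by $g(b) = (g_n(b))_{n\in\mathbb N}$. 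Since $\norm{g_n}_F^R = 1$, we have $\lvert g_n(b)\rvert \le \norm{b}_F^R$ for every $b \in B_1^R(X)$, so $g(b)$ indeed lies in $\linfn$ and $\norm{g}_F^R \le 1$; in particular $g$ is a bounded linear function with values in a $1$-injective Banach space. By \Cref{lemma:extend_BC} applied with $V = \linfn$ — this is the only place the hypothesis $\infcohom{X}{\linfn}=0$ enters — the function $g$ extends to a bounded linear function $G\colon C_1^R(X)\to\linfn$; put $M := \norm{G}_\infty^R < +\infty$.

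To conclude, compose $G$ with the $n$-th coordinate evaluation $\pi_n\colon\linfn\to\mathbb R$, which has operator norm $1$, obtaining $G_n := \pi_n\circ G\colon C_1^R(X)\to\mathbb R$ with $\norm{G_n}_\infty^R \le M$ for every $n$; and since $G_n(b) = \pi_n(g(b)) = g_n(b)$ for $b \in B_1^R(X)$, each $G_n$ is a bounded linear extension of $g_n$. Choosing $n > M$ then contradicts the defining property of $g_n$. Hence a finite $K = K(R)$ works for every $R \ge R_0$, which proves the lemma. I do not foresee a genuine obstacle; the only subtleties are the normalisation $\norm{g_n}_F^R = 1$ (without it the packaged $g$ need not be bounded, so \Cref{lemma:extend_BC} would not apply) and the observation that one never needs each individual real-valued $g_n$ to possess a bounded extension on its own — the single $\linfn$-valued extension produces all of the $G_n$ at once with a common norm bound, which is exactly the uniformity being claimed.
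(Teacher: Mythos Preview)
Your proposal is correct and follows essentially the same approach as the paper: package a contradicting sequence of normalised real-valued functionals into a single bounded $\linfn$-valued functional on $B_1^R(X)$, extend it via \Cref{lemma:extend_BC}, and project to coordinates to obtain uniformly bounded real-valued extensions, yielding the contradiction. Your write-up is in fact slightly more explicit about the normalisation step and the coordinate projections than the paper's, but the argument is the same.
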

\begin{proof}
  Take $R \ge R_0$, where $R_0$ is given by \Cref{lemma:extend_BC}.
  We argue by contradiction, assuming that there is a sequence $\{g_i\}_{i \in \mathbb{N}}$ of linear functions $g_i\colon B_1^R(X)\to\mathbb{R}$ with $\norm{g_i}_F^R \le 1$ such that every $G_i\colon C_1^R(X)\to\mathbb{R}$ extending $g_i$ has $\norm{G_i}_\infty^R \ge i$.

  Collecting the $g_i$ we obtain a linear function $g\colon B_1^R(X) \to \linfn$ with $\norm{g}_F^R \le 1$.
  By \Cref{lemma:extend_BC}, $g$ has a linear extension $G\colon C_1^R(X)\to\linfn$ with $\norm{G}_\infty^R < +\infty$.
  Such a $G$ provides an extension $G_i\colon C_1^R(X)\to\mathbb{R}$ for every $g_i$, with $\norm{G_i}_\infty \le \norm{G}_\infty$.
  This contradicts the choice of the $g_i$, for $i > \norm{G}_\infty$.
\end{proof}

We are now ready to prove Proposition \ref{prop:vanishing_hyp}.

\begin{proof}[Proof of Proposition \ref{prop:vanishing_hyp}]
    Fix a sufficiently large $R$ so that  \Cref{lemma:uniform_extensions} applies, and consider the inclusion $j\colon B_1^R(X) \to C_1^R(X)$.
    Now, \cite[Proposition 4.1]{Ger96}, applied to the map $j$, says that if there is a $K$ such that every bounded linear functional $g\colon B_1^R(X) \to \mathbb{R}$ can be extended to $G\colon C_1^R(X) \to \mathbb{R}$ with $\norm{G}_\infty^R \le K \norm{g}_F^R$, then $j$ is undistorted.
    In our situation, this is ensured by \Cref{lemma:uniform_extensions}.
    
    This means that $\norm{\cdot}_1 \ge C \norm{\cdot}_F^R$ on $B_1^R(X)$, for a certain $C > 0$ that may depend on $R$.
    That is, $X$ satisfies a homological linear isoperimetric inequality, and thus it is hyperbolic by \Cref{prop:linear_isop_hyp}.
\end{proof}

\section{Relative versions}\label{sec:relative}

In this section we study a relative version of $\ell^\infty$-cohomology. The main results are: Proposition \ref{prop:excision}, which is a form of excision; Corollary \ref{cor:cusp_hyp}, which is important for our cohomological characterisation of hyperbolically embedded subgroups; and Proposition \ref{prop:group_case}, relating relative $\ell^\infty$-cohomology and $\ell^\infty$-cohomology of groups.

In this section we consider pairs $(X,\mathcal{Y})$ in which $X$ is a metric space and $\mathcal{Y}$ is a collection of subspaces of $X$, possibly with multiplicities (in general, we allow a subset to appear multiple times in the collection).

The relevant notion of ``equivalence'' of pairs that we will be using is the following.

\begin{definition}
    Let $X$ and $X'$ be metric spaces. Let $\mathcal Y$ and $\mathcal Y'$ be collections of subspaces in $X$ and $X'$, respectively.
    A quasi-isometry of pairs $(f,f_\#)\colon (X,\mathcal{Y}) \to (X',\mathcal{Y}')$ is given by:
    \begin{itemize}
        \item A quasi-isometry $f\colon X \to X'$;
        \item A bijection $f_\#\colon \mathcal{Y} \to \mathcal{Y}'$ such that there is a constant $C \ge 0$ for which $d_\mathrm{Haus}(f(Y), f_\#(Y)) \le C$ for every $Y \in \mathcal{Y}$.
    \end{itemize}
\end{definition}

We now extend the definitions given in \Cref{sec:hyperbolic_graphs} to the relative setting.
Let $X$ be a graph and let $\mathcal Y$ be a collection of \emph{pairwise disjoint} subgraphs. For any normed vector space $V$ and every $i \ge 0$, we define
\[C^i(X,\mathcal Y;V)=\{f\in C^i(X;V) : f|_{(\mathcal{V}_Y)^{i+1}}=0\ \forall Y\in\mathcal Y\},\]
and its subspace
\[C^i_\infsub(X,\mathcal Y;V) = C^i(X,\mathcal Y;V) \cap C^i_\infsub(X;V).\]
By restricting the usual coboundary operator, we obtain a complex of vector spaces, and we denote its cohomology by $H^\bullet_\infsub(X,\mathcal Y;V)$.

\begin{remark}\label{rmk:only_vert}
    In the definition of $H^\bullet_\infsub(X,\mathcal Y;V)$, one can think of the elements $Y \in \mathcal{Y}$ as subsets of vertices of $X$: the edges they contain do not play any role.
\end{remark}

More generally, if $X$ is a graph and $\mathcal Y$ is a collection of subgraphs, we define $H_\infsub^\bullet(X,\mathcal Y;V)$ to be the corresponding cohomology of any pair $(X',\mathcal Y')$ which is quasi-isometric to $(X,\mathcal Y)$ and where the elements of $\mathcal Y'$ are pairwise disjoint subgraphs of $X'$.
This is well defined since such a pair $(X',\mathcal{Y}')$ always exists, and the cohomology is invariant under quasi-isometries of pairs:

\begin{lemma}\label{lemma:quasi_isometry}
    Let $X$ and $X'$ be connected graphs, and let $\mathcal{Y}$ and $\mathcal{Y}'$ be collections of pairwise disjoint subgraphs in $X$ and $X'$, respectively.
    Suppose that $(X,\mathcal Y)$ is quasi-isometric to $(X',\mathcal Y')$. Then, any quasi-isometry induces an isomorphism $H^k_\infsub(X,\mathcal Y;V) \cong H^k_\infsub(X',\mathcal Y';V)$.
\end{lemma}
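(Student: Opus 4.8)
The plan is to follow the standard strategy for proving quasi-isometry invariance of a cohomology theory defined via coarse cochains: build cochain maps in both directions from the quasi-isometry and a coarse inverse, and show that the two composites are chain-homotopic to the identity, with all maps and homotopies respecting the relative subspace condition and the ``bounded on every $C_i^R$'' condition. First I would fix a quasi-isometry of pairs $(f,f_\#)\colon (X,\mathcal Y)\to(X',\mathcal Y')$ and a quasi-inverse $(\bar f,\bar f_\#)\colon(X',\mathcal Y')\to(X,\mathcal Y)$, and replace $f$ by a map sending vertices to vertices (bounded distance from the original, which changes nothing up to the constants involved). Pulling back along $f$ gives $f^*\colon C^i(X';V)\to C^i(X;V)$ by $(f^*\phi)(x_0,\dots,x_i)=\phi(f(x_0),\dots,f(x_i))$; because $f$ is coarsely Lipschitz, $f$ sends an $R$-bounded tuple to an $R'$-bounded tuple with $R'$ depending only on $R$ and the quasi-isometry constants, so $f^*$ restricts to a map $C^i_\infsub(X';V)\to C^i_\infsub(X;V)$, and it visibly commutes with $\cbd$. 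The key point for the relative statement is that $f$ maps $Y\in\mathcal Y$ into a bounded neighbourhood of $f_\#(Y)\in\mathcal Y'$ — but this is \emph{not} enough to send $C^i(X',\mathcal Y';V)$ into $C^i(X,\mathcal Y;V)$, since cochains vanish exactly on the $Y'$, not on neighbourhoods of them.

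To fix this I would introduce the ``thickened'' pair $(X,\mathcal Y^{(D)})$ where $\mathcal Y^{(D)}$ is the collection of $D$-neighbourhoods of the members of $\mathcal Y$, and observe (this is where the hypothesis that the members of $\mathcal Y$ are \emph{pairwise disjoint} gets used, together with the standard fact that in the relative setting one may assume the subgraphs are moreover ``$D$-separated'' after an initial quasi-isometry, or else allow the thickenings to overlap in a controlled way and argue that the overlaps are bounded) that for $D$ large enough the inclusion of cochain complexes $C^\bullet_\infsub(X,\mathcal Y^{(D)};V)\hookrightarrow C^\bullet_\infsub(X,\mathcal Y;V)$ is a quasi-isomorphism. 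Granting that reduction, $f^*$ does send $C^i_\infsub(X',\mathcal Y';V)$ into $C^i_\infsub(X,\mathcal Y^{(D)};V)$ for appropriate $D$, so we get an induced map on relative cohomology; the same construction applied to $\bar f$ gives a map the other way. The composites $f^*\circ\bar f^*$ and $\bar f^*\circ f^*$ are pullback along the maps $\bar f\circ f$ and $f\circ\bar f$, which are at bounded distance from the respective identities.

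It therefore remains to show: if $u\colon X\to X$ is a vertex-to-vertex map at distance $\le A$ from $\mathrm{id}_X$ (and at distance $\le A$ from $\mathrm{id}$ ``along'' the subspaces, i.e. $u(Y)$ stays in a bounded neighbourhood of $Y$ for each $Y\in\mathcal Y$), then $u^*$ is chain-homotopic to the identity on $C^\bullet_\infsub(X,\mathcal Y;V)$ via a homotopy that is bounded on each $C_i^R$. The homotopy is the usual prism/``straight-line'' operator: set
\[
(h\phi)(x_0,\dots,x_{i-1}) \;=\; \sum_{k=0}^{i-1}(-1)^k\,\phi\bigl(x_0,\dots,x_k,\,u(x_k),u(x_{k+1}),\dots,u(x_{i-1})\bigr),
\]
and check by the standard computation that $\cbd h + h\cbd = \mathrm{id} - u^*$. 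Every tuple appearing on the right-hand side has diameter bounded in terms of the diameter of $(x_0,\dots,x_{i-1})$ and $A$, so $h$ maps $C^i_\infsub$ to $C^{i-1}_\infsub$ with the required norm control; and if $(x_0,\dots,x_{i-1})$ lies in some $Y\in\mathcal Y$, then $x_k$ and $u(x_k)$ all lie in the $A$-neighbourhood of $Y$, so $h\phi$ vanishes there provided $\phi$ vanishes on the thickening — i.e. $h$ is defined on $C^\bullet_\infsub(X,\mathcal Y^{(A)};V)$ and lands in $C^\bullet_\infsub(X,\mathcal Y;V)$. Combined with the quasi-isomorphism $C^\bullet_\infsub(X,\mathcal Y^{(D)};V)\simeq C^\bullet_\infsub(X,\mathcal Y;V)$ from the previous paragraph, this yields that $u^*$ induces the identity on $H^\bullet_\infsub(X,\mathcal Y;V)$, and hence that $f^*$ and $\bar f^*$ are mutually inverse isomorphisms.

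\textbf{Main obstacle.} The routine part is the homotopy computation and the norm/diameter bookkeeping; the genuine subtlety — the reason the relative statement is not a one-line consequence of the absolute one — is handling the subspaces. A cochain in $C^\bullet(X',\mathcal Y';V)$ vanishes precisely on the $Y'$, but pullback only controls things on \emph{neighbourhoods} of preimages, so one must show that passing to a thickened family does not change the cohomology. I expect this comparison, $C^\bullet_\infsub(X,\mathcal Y;V)\simeq C^\bullet_\infsub(X,\mathcal Y^{(D)};V)$, to be the step requiring the most care: it should follow from another prism-type homotopy that ``pushes'' a cochain supported near $\bigcup\mathcal Y$ off it, using that the members are pairwise disjoint (and, after an initial harmless quasi-isometry, uniformly separated, so that the $D$-thickenings are still disjoint), but making this precise while keeping the $R$-local boundedness is the crux.
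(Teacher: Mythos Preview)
Your approach is workable, but it takes a detour that the paper avoids entirely. The paper does not introduce thickenings $\mathcal Y^{(D)}$ or prove the comparison $C^\bullet_\infsub(X,\mathcal Y;V)\simeq C^\bullet_\infsub(X,\mathcal Y^{(D)};V)$ at all. Instead it \emph{perturbs the map rather than the subspaces}: since $d_{\mathrm{Haus}}(f(Y),f_\#(Y))\le C$, one may redefine $f$ on each $Y$ by moving $f(y)$ to a nearest point of $f_\#(Y)$, so that the strict containment $f(Y)\subseteq f_\#(Y)$ holds on the nose. With this adjusted $f$ the pullback $f^*$ sends $C^\bullet_\infsub(X',\mathcal Y';V)$ directly into $C^\bullet_\infsub(X,\mathcal Y;V)$, and the prism homotopy between two such adjusted maps already respects the relative condition (if all $x_j\in Y$ then all $f(x_j),\hat f(x_j)\in f_\#(Y)=\hat f_\#(Y)$, so every mixed tuple in the homotopy lies in $f_\#(Y)$). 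The step you flag as the ``main obstacle'' simply does not arise.

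What your route would buy is a cleaner separation of concerns: one thickening lemma, after which the relative case becomes formally identical to the absolute one. The cost is that lemma itself, plus a preliminary passage to a model where the $Y$'s are $D$-separated so that the thickenings stay disjoint. The paper's route trades all of that for a slightly fussier class of admissible maps (``relative coarsely uniform'' maps required to satisfy $f(Y)\subseteq f_\#(Y)$ strictly) and a matching notion of ``relatively close''; once those definitions are set up, the argument is just the two short lemmas you already sketched (pullback is well-defined; the prism homotopy shows close maps agree), with no auxiliary quasi-isomorphism to establish.
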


\Cref{lemma:quasi_isometry} is a consequence of Lemmas \ref{lemma:relative_induced} and \ref{lemma:rel_close} below, which involve a more general class of maps, including quasi-isometries.

\begin{definition}\label{def:rel_coarse_uniform}
    Let $X$ and $X'$ be metric spaces, and let $\mathcal{Y}$ and $\mathcal{Y}'$ be collections of subspaces in $X$ and $X'$, respectively.
    A \emph{relative coarsely uniform map} $(f,f_\#)\colon (X,\mathcal{Y}) \to (X',\mathcal{Y}')$ is given by maps $f\colon X\to X'$ and $f_\#\colon \mathcal{Y}\to\mathcal{Y'}$ satisfying the following assumptions:
    \begin{enumerate}
        \item\label{it:sharp_sub} For every $Y \in \mathcal{Y}$, $f(Y) \subseteq f_\#(Y)$;
        \item\label{it:uniform} If $x_1,x_2 \in X$, then either there is some $Y \in\mathcal{Y}$ such that $x_1,x_2 \in Y$, or $d(f(x_1),f(x_2)) \le \rho_+(d(x_1,x_2))$.
    \end{enumerate}
    In condition \eqref{it:uniform}, $\rho_+\colon [0,\infty) \to [0,\infty)$ is a non-decreasing function, independent of $x_1$ and $x_2$.
\end{definition}

\begin{remark}
\label{rem:strict_containment}
    In condition \eqref{it:sharp_sub} it would make sense, and would probably be more natural, to only assume that $f(Y) \subseteq N_R(f_\#(Y))$ for some constant $R \ge 0$ independent of $Y$.
    In this way, the notion of relative coarsely uniform map would be stable under finite-distance perturbations of $f$.
    The stricter formulation of condition \eqref{it:sharp_sub} is needed in Lemmas \ref{lemma:relative_induced} and \ref{lemma:rel_close} below; after having proved these lemmas, one can safely relax the condition, keeping in mind that when performing a pull-back (see \Cref{lemma:relative_induced}), one has to perturb $f$ so that the stricter form of condition \eqref{it:sharp_sub} holds.
    The result will not depend on the perturbation, by \Cref{lemma:rel_close}.
\end{remark}

\begin{definition}\label{def:rel_close}
    Let $(X,\mathcal Y)$ and $(X',\mathcal{Y}')$ be pairs as in \Cref{def:rel_coarse_uniform}.
    Two relative coarsely uniform maps $(f,f_\#)$ and $(\hat f, \hat f_\#)$ from $(X,\mathcal Y)$ to $(X',\mathcal{Y}')$ are \emph{relatively close} if $f_\# = \hat f_\#$ and there exists a non-decreasing function $\rho\colon [0,\infty)\to[0,\infty)$ so that the following holds for every $x_1,x_2 \in X$:
    either there is some $Y \in \mathcal{Y}$ containing $x_1$ and $x_2$, or $d(f(x_1),\hat f(x_2)) \le \rho(d(x_1,x_2))$.
\end{definition}

In particular, if $f_\# =\hat f_\#$ and there is some $C \ge 0$ so that $d(f(x),\hat f(x)) \le C$ for every $x \in X$, i.e. $f$ and $\hat f$ are uniformly close in the usual sense, then $(f,f_\#)$ and $(\hat f, \hat f_\#)$ are relatively close, because in this situation $d(f(x_1),\hat f(x_2)) \le C + \rho_+(d(x_1,x_2))$ whenever $x_1$ and $x_2$ do not belong to the same $Y \in \mathcal{Y}$.

\begin{lemma}\label{lemma:relative_induced}
    Let $X$ and $X'$ be connected graphs, and let $\mathcal{Y}$ and $\mathcal{Y}'$ be collections of pairwise disjoint subgraphs in $X$ and $X'$, respectively.
    Then, any relative coarsely uniform map $(f,f_\#)\colon (X,\mathcal{Y}) \to (X',\mathcal{Y}')$ induces, by the usual pull-back of cochains, a map in cohomology $f^*\colon H^\bullet_\infsub(X',\mathcal Y';V) \to H^\bullet_\infsub(X,\mathcal Y;V)$.
\end{lemma}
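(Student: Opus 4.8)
The plan is to define $f^*$ by the usual pull-back of cochains and then check that it restricts to a chain map between the relative $\ell^\infty$-cochain complexes; passing to cohomology is then purely formal. Since $f$ is a priori only a map of metric spaces, I first replace it by a map $\bar f\colon\vertx{X}\to\vertx{X'}$ sending vertices to vertices, with $d(\bar f(x),f(x))\le 1$ for all $x$ and $\bar f(\vertx{Y})\subseteq\vertx{f_\#(Y)}$ for every $Y\in\mathcal Y$; such a $\bar f$ exists because $f_\#(Y)$ is a subcomplex containing $f(Y)$, and it is again the first component of a relative coarsely uniform map, with modulus $\rho_+$ increased by $2$, which we keep calling $\rho_+$. (Any two such choices are uniformly close with the same $f_\#$, hence relatively close, so by \Cref{lemma:rel_close} the induced map in cohomology will not depend on the choice.) For $\phi\in C^i(X';V)$ we set $(f^*\phi)(x_0,\dots,x_i)=\phi(\bar f(x_0),\dots,\bar f(x_i))$. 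Compatibility of $f^*$ with $\cbd$ is the standard formal computation, since the face maps are combinatorial, so the only real content is that $f^*$ maps $C^i_\infsub(X',\mathcal Y';V)$ into $C^i_\infsub(X,\mathcal Y;V)$: namely, that $f^*\phi$ vanishes on tuples lying in some $Y\in\mathcal Y$, and that $\lvert f^*\phi\rvert_\infty^R<\infty$ for every $R$.

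Both of these reduce to the following dichotomy, which is where the hypotheses genuinely come into play and which I expect to be the main (indeed essentially the only) obstacle. \emph{For every $R\in\mathbb N$, with $R':=\lceil 2\rho_+(R)\rceil+1$, every tuple $(x_0,\dots,x_i)\in\vertx{X}^{i+1}$ of diameter $\le R$ satisfies: either $\bar f(x_0),\dots,\bar f(x_i)$ all lie in a common element of $\mathcal Y'$, or the tuple $(\bar f(x_0),\dots,\bar f(x_i))$ has diameter $\le R'$.} Granting this: in the first case $f^*\phi(x_0,\dots,x_i)=0$, since $\phi$ vanishes on $(\vertx{Y'})^{i+1}$; in the second case $\lVert f^*\phi(x_0,\dots,x_i)\rVert\le\lvert\phi\rvert_\infty^{R'}$, which is finite because $\phi\in C^i_\infsub(X';V)$. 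Hence $\lvert f^*\phi\rvert_\infty^R\le\lvert\phi\rvert_\infty^{R'}<\infty$. Moreover, if all $x_j$ lie in a single $Y\in\mathcal Y$, then $\bar f(x_j)\in\vertx{f_\#(Y)}$ for all $j$ — this uses condition \eqref{it:sharp_sub} in its strict form, as flagged in \Cref{rem:strict_containment} — so we are in the first case and $f^*\phi$ vanishes there, which is exactly the relative vanishing.

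It remains to prove the dichotomy. Suppose the image tuple lies in no common element of $\mathcal Y'$, yet has diameter $>R'$; choose $a,b$ with $d(\bar f(x_a),\bar f(x_b))>R'\ge 2\rho_+(R)+1>\rho_+(R)\ge\rho_+(d(x_a,x_b))$. By the contrapositive of condition \eqref{it:uniform}, $x_a$ and $x_b$ lie in a common $Y\in\mathcal Y$; put $\widetilde Y:=f_\#(Y)\in\mathcal Y'$, so that $\bar f(x_a),\bar f(x_b)\in\vertx{\widetilde Y}$. Now fix any index $c$. Applying condition \eqref{it:uniform} to the pair $x_a,x_c$ (of distance $\le R$): either they lie in a common element of $\mathcal Y$, which — since $\mathcal Y$ is pairwise disjoint and $x_a\in Y$ — must be $Y$, whence $\bar f(x_c)\in\vertx{\widetilde Y}$; or $d(\bar f(x_a),\bar f(x_c))\le\rho_+(R)$. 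The same dichotomy applies to $x_b,x_c$. If $\bar f(x_c)\notin\widetilde Y$, then the first alternative fails in both instances, so $d(\bar f(x_a),\bar f(x_c))\le\rho_+(R)$ and $d(\bar f(x_b),\bar f(x_c))\le\rho_+(R)$, giving $d(\bar f(x_a),\bar f(x_b))\le 2\rho_+(R)<R'$, a contradiction. Therefore $\bar f(x_c)\in\vertx{\widetilde Y}$ for every $c$, so the whole image tuple lies in $\widetilde Y\in\mathcal Y'$, contradicting the standing assumption. This establishes the dichotomy, and hence the lemma: $f^*$ is a chain map $C^\bullet_\infsub(X',\mathcal Y';V)\to C^\bullet_\infsub(X,\mathcal Y;V)$, inducing $f^*\colon H^\bullet_\infsub(X',\mathcal Y';V)\to H^\bullet_\infsub(X,\mathcal Y;V)$.
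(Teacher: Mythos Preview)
Your proof is correct and follows essentially the same approach as the paper's. Both arguments reduce to showing that a bounded-diameter tuple $(x_0,\dots,x_k)$ either has its image contained in a single member of $\mathcal Y'$ (whence the cochain vanishes) or has image of diameter $\le 2\rho_+(D)$, and both obtain the latter bound via the triangle inequality through an auxiliary index. The paper organises this as a direct case split on whether the \emph{domain} tuple lies in a common $Y\in\mathcal Y$, whereas you phrase the dichotomy on the \emph{image} side and argue by contradiction; the substance is identical. You are somewhat more careful than the paper in explicitly passing to a vertex-level map $\bar f$ before forming the pullback, which the paper leaves implicit.
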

\begin{proof}
    We need to prove that, given $\alpha \in C^k_\infsub(X',\mathcal Y';V)$, the pull-back $f^*\alpha$ belongs to $C^k_\infsub(X,\mathcal Y;V)$.
    
    Fix $D \ge 0$.
    Take $x_0, \dots, x_k \in X$ with $d(x_i,x_j) \le D$ for every $i,j \in \{0,\dots, k\}$.
    Consider first the case in which $x_0,\dots,x_k \in Y$ for some $Y \in \mathcal{Y}$.
    Then all $f(x_i)$ belong to $f_\#(Y)$, so $f^*\alpha(x_0,\dots,x_k) = \alpha(f(x_0),\dots,f(x_k)) = 0$.
    This already proves that $f^*\alpha \in C^k(X,\mathcal Y;V)$.
    
    Now, consider the case in which there is no such $Y$.
    If two indices $i$ and $j$ are such that $x_i$ and $x_j$ do not belong to the same $Y \in \mathcal{Y}$, then $d(f(x_i),f(x_j)) \le \rho_+(d(x_i),d(x_j)) \le \rho_+(D)$.
    If, instead, $x_i,x_j \in Y$ for some $Y$, there must be a third index $x_l$ that does not belong to $Y$, and by the triangle inequality, we have $d(f(x_i),f(x_j)) \le d(f(x_i),f(x_l)) + d(f(x_l),f(x_j)) \le 2\rho_+(D)$.
    
    In any case, we have $d(f(x_i),f(x_j)) \le 2\rho_+(D)$ for every $i$ and $j$.
    It follows that $f^*\alpha$ belongs to $C_\infsub^k(X;V)$. Thus, it belongs to $C^k_\infsub(X,\mathcal Y;V)$.
\end{proof}

The following lemma concludes the proof of Lemma \ref{lemma:quasi_isometry}, by considering a quasi-inverse of the quasi-isometry.

\begin{lemma}\label{lemma:rel_close}
    Let $(f,f_\#)$ and $(\hat f, \hat f_\#)$ be relatively coarsely uniform maps from $(X,\mathcal{Y})$ to $(X',\mathcal{Y}')$, where, as in \Cref{lemma:relative_induced}, the subspaces in $\mathcal{Y}$ and $\mathcal{Y}'$ are all pairwise disjoint.
    If the two maps are relatively close, then they induce the same pull-back map $H^\bullet_\infsub(X',\mathcal Y';V) \to H^\bullet_\infsub(X,\mathcal Y;V)$.
\end{lemma}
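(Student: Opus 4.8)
The plan is to construct an explicit cochain homotopy between the two pull-back maps $f^*$ and $\hat f^*$ at the level of cochains, mimicking the classical simplicial prism/homotopy operator but keeping track of the $\ell^\infty$-boundedness and the relative vanishing conditions. Concretely, I would define a map $P\colon C^k(X';\mathcal{Y}';V) \to C^{k-1}(X,\mathcal{Y};V)$ by the usual ``prism'' formula: for a cochain $\alpha$ and a tuple $(x_0,\dots,x_{k-1}) \in \vertx{X}^k$, set
\[
P\alpha(x_0,\dots,x_{k-1}) \;=\; \sum_{j=0}^{k-1} (-1)^{j}\, \alpha\bigl(f(x_0),\dots,f(x_j),\hat f(x_j),\dots,\hat f(x_{k-1})\bigr).
\]
Here the tuple fed into $\alpha$ interpolates between $f$ and $\hat f$ by switching over at index $j$. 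A direct computation with the coboundary operator shows $\delta P \alpha + P \delta \alpha = \hat f^* \alpha - f^* \alpha$ on all of $C^k(X;V)$; this is the standard identity and I would just cite it / verify it in one line. Granting that, if $\alpha$ is a cocycle then $\hat f^*\alpha - f^*\alpha = \delta(P\alpha)$, so the two pull-backs agree in cohomology, provided $P\alpha$ actually lands in $C^{k-1}_\infsub(X,\mathcal{Y};V)$.

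So the real content is checking that $P$ preserves (i) the relative vanishing condition and (ii) the $\ell^\infty$-boundedness on each $C^{k-1}_R(X)$. For (i): if $x_0,\dots,x_{k-1}$ all lie in a single $Y \in \mathcal{Y}$, then by condition \eqref{it:sharp_sub} of \Cref{def:rel_coarse_uniform} all of $f(x_i)$ and $\hat f(x_i)$ lie in $f_\#(Y) = \hat f_\#(Y) \in \mathcal{Y}'$ (using $f_\# = \hat f_\#$ from the relative-closeness hypothesis), so every term $\alpha(\dots)$ vanishes because $\alpha \in C^k(X';\mathcal{Y}';V)$; hence $P\alpha(x_0,\dots,x_{k-1}) = 0$. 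For (ii): fix $R$ and suppose $d(x_i,x_j) \le R$ for all $i,j$. If all the $x_i$ lie in one $Y$ we are done by (i), so assume not. Then, exactly as in the proof of \Cref{lemma:relative_induced}, the triangle-inequality argument (first handle pairs not in a common $Y$ using $\rho_+$, then bootstrap pairs lying in a common $Y$ via a third index) shows $d(f(x_i),f(x_j))$, $d(\hat f(x_i),\hat f(x_j))$, and the mixed distances $d(f(x_i),\hat f(x_j))$ are all bounded by a constant $R' = R'(R)$ — the mixed distances being controlled by $\rho$ from \Cref{def:rel_close}, again with the bootstrap trick to cover pairs sharing a $Y$. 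Therefore each argument tuple of $\alpha$ appearing in $P\alpha(x_0,\dots,x_{k-1})$ has diameter at most $R'$, so
\[
\norm{P\alpha(x_0,\dots,x_{k-1})} \;\le\; k \cdot \lvert \alpha \rvert_\infty^{R'},
\]
giving $\lvert P\alpha \rvert_\infty^R \le k\,\lvert\alpha\rvert_\infty^{R'} < \infty$. Thus $P\alpha \in C^{k-1}_\infsub(X,\mathcal{Y};V)$, completing the argument.

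The main obstacle I anticipate is purely bookkeeping rather than conceptual: ensuring the diameter bounds on the \emph{mixed} tuples $(f(x_0),\dots,f(x_j),\hat f(x_j),\dots,\hat f(x_{k-1}))$ hold uniformly even when some — but not all — of the $x_i$ happen to lie in a common element of $\mathcal{Y}$. The relative-closeness and relative-coarse-uniformity hypotheses only control distances for pairs \emph{not} contained in a common $Y$, so one must run the same ``use a third index outside $Y$'' trick that appears in \Cref{lemma:relative_induced}, and one should double-check that such a third index always exists under the standing assumption that not all of $x_0,\dots,x_{k-1}$ lie in a single $Y$ (it does, since the $Y$'s are pairwise disjoint, so at most one $Y$ can contain any two of the points, and if two points share it a third point avoids it). Once that is settled, everything else is the standard homotopy-operator computation.
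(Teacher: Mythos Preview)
Your proposal is correct and is essentially the same argument as the paper's: both construct the standard prism homotopy operator and verify that it lands in $C^\bullet_\infsub(X,\mathcal Y;V)$ by checking the relative vanishing (via $f_\#=\hat f_\#$ and condition \eqref{it:sharp_sub}) and the diameter bound on mixed tuples (via the ``route through a pair $i^*,j^*$ not sharing a $Y$'' trick). The only cosmetic difference is indexing ($P\colon C^k\to C^{k-1}$ versus the paper's $h\colon C^{k+1}\to C^k$), and your anticipated obstacle is exactly the point the paper spells out explicitly.
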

\begin{proof}
    We consider the usual homotopy maps $h\colon C^{k+1}(X';V) \to C^{k}(X;V)$ defined by the formula
    \[(h\alpha)(x_0,\dots,x_{k}) = \sum_{i=0}^k (-1)^i \alpha(f(x_0),\dots,f(x_i),\hat f(x_i), \dots,\hat f(x_{k})),\]
    and prove that if $\alpha \in C^{k+1}_\infsub(X',\mathcal Y';V)$, then $h\alpha \in C^{k}_\infsub(X,\mathcal Y;V)$. 
    
    If all the $x_i$ belong to $Y \in \mathcal{Y}$, then every $f(x_i)$ and every $\hat f(x_i)$ belong to $f_\#(Y) = \hat f_\#(Y)$.
    It follows that $h\alpha \in C^k(X,\mathcal Y;V)$.
    
    Suppose now that the pairwise distances between the $x_i$ are not bigger than some constant $D$, and that there is no $Y \in\mathcal Y$ containing all of them.
    Since the subsets in $\mathcal{Y}$ are pairwise disjoint, there are two indices $i^*,j^* \in \{0,\dots,k\}$ such that there is no $Y \in \mathcal{Y}$ containing both $x_{i^*}$ and $x_{j^*}$.
    Consider now any two indices $i$ and $j$.
    As in the proof of \Cref{lemma:relative_induced}, we have $d(f(x_i),f(x_j)) \le 2\rho_+(D)$ and $d(\hat f(x_i),\hat f(x_j)) \le 2\hat\rho_+(D)$.
    Moreover, since $f$ and $\hat f$ are relatively close, we have
    \begin{align*}
        d(f(x_i),\hat f(x_j)) &\le  d(f(x_i),f(x_{i^*}))
        + d(f(x_{i^*}), \hat f(x_{j^*}))
        + d(\hat f(x_{j^*}), \hat f(x_j))\\
        &\le 2\rho_+(D) + \rho(D) + 2\hat\rho_+(D).
    \end{align*}
    Therefore, $h\alpha \in C_\infsub^k(X,\mathcal Y;V)$ and the proof is complete.
\end{proof}

\begin{example}
    Let $X$ be the standard Cayley graph of the group $\mathbb{Z}$ of integers.
    Take $\mathcal{Y} = \{X\}$; then, it follows easily from the definitions that $H_\infsub^k(X,\mathcal{Y};\mathbb{R}) = 0$ in every degree $k \ge 0$.
    On the other hand, if $\mathcal{Y}' = \{X, X\}$, so that $X$ appears with multiplicity $2$, it can be seen that $H_\infsub^2(X,\mathcal{Y'};\mathbb{R}) \neq 0$; this computation can be done by using the pair $(Z,\{Y_1,Y_2\})$ displayed in \Cref{fig:example_Z}, which is quasi-isometric to $(X,\mathcal{Y}')$.
    \begin{figure}
        \centering
        \includegraphics[scale=0.8]{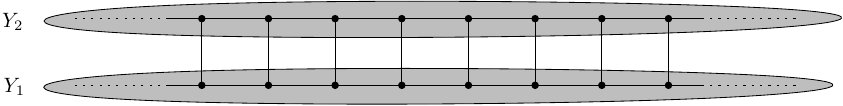}
        \caption{A graph $Z$ with two subgraphs $Y_1,Y_2$.}
        \label{fig:example_Z}
    \end{figure}
    Consider the map $(f,f_\#)\colon (Z,\{Y_1,Y_2\}) \to (Z,\{Y_1,Y_2\})$ where $f_\#(Y_1) = f_\#(Y_2) = Y_1$ and $f$ is the obvious projection onto $Y_1$.
    Notice that $f\colon Z \to Z$ is uniformly close to the identity.
    However, it is not relatively close, in the sense of \Cref{def:rel_close}, to the identity of the pair $(Z,\{Y_1,Y_2\})$, and the induced map in $H_\infsub^2$ is the zero map; in fact, $(f,f_\#)$ factors through $(X,\mathcal{Y})$.

    This example shows the importance of taking models with disjoint subgraphs, that multiplicities of subgraphs are relevant, and that it is important, when performing pull-backs of cochains, to use maps $f$ satisfying assumption \eqref{it:sharp_sub} in \Cref{def:rel_coarse_uniform}, asking for sharp containment.
\end{example}

\subsection{Excision}

The following proposition is a form of excision for relative $\ell^\infty$-cohomology, and will be used to study cusped spaces.

\begin{prop}[Excision]
\label{prop:excision}
    Let $X$ and $X'$ be connected graphs, and let $\mathcal{Y}$ and $\mathcal{Y}'$ be collections of pairwise disjoint subgraphs in $X$ and $X'$, respectively.
    Let $(f,f_\#)\colon (X,\mathcal{Y}) \to (X',\mathcal{Y}')$ be a relatively coarsely uniform map, satisfying the following additional assumptions:
    \begin{itemize}
        \item $f_\#$ is a bijection;
        \item For every $Y \in \mathcal{Y}$ and every $x \in X$, we have $x \in Y$ if and only if $f(x) \in f_\#(Y)$;
        \item There is a non-decreasing function $\rho_-\colon [0,\infty) \to [0,\infty)$, with $\rho_-(r) \to \infty$ as $r \to \infty$, such that, whenever $x_1,x_2 \in X$ do not both belong to some $Y \in \mathcal{Y}$, then $d(f(x_1),f(x_2)) \ge \rho_-(d(x_1,x_2))$;
        \item There is a non-decreasing function $\rho\colon [0,\infty)\to [0,\infty)$ such that, for every $Y' \in \mathcal{Y}'$ and every $y' \in Y'$ and $x' \in X' \setminus Y'$, there is some $y \in X$ such that $d(y',f(y)) \le \rho(d(y',x'))$.
    \end{itemize}
    Moreover, assume that $X' = f(X) \cup \bigcup_{Y' \in \mathcal{Y}'} Y'$.
    Then, in any degree $k\ge 0$, the pull-back map $f^*\colon H_\infsub^k(X',\mathcal{Y'};V) \to H_\infsub^k(X,\mathcal{Y};V)$ is an isomorphism.
\end{prop}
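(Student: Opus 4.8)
The plan is to construct an explicit chain-level map that inverts $f^*$ up to chain homotopy, using the geometric hypotheses to control $\ell^\infty$-boundedness. The key observation is that $f$ is, in a relative sense, a bijection onto its image together with the subgraphs: the hypothesis ``$x \in Y \iff f(x) \in f_\#(Y)$'' plus the lower bound $\rho_-$ forces $f$ to be injective away from the subgraphs (two points with the same image must lie in a common $Y$, but then they lie in $f_\#(Y)$-fibre... ), and in fact the cleanest route is to first reduce to the case where $X$ is literally a subgraph of $X'$. Concretely, I would build an auxiliary pair $(\tilde X, \tilde{\mathcal Y})$ with $\tilde X = f(X) \cup \bigcup_{Y'} Y'$ sitting inside $X'$, show $(f,f_\#)$ factors as $(X,\mathcal Y) \to (\tilde X, \tilde{\mathcal Y}) \hookrightarrow (X',\mathcal Y')$ where the first map is a relative quasi-isometry (hence induces an iso on $H^\bullet_\infsub$ by \Cref{lemma:quasi_isometry}), and the second is the inclusion of the assumed covering subgraph. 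So the real content is: \emph{if $X' = A \cup \bigcup_{Y'\in\mathcal Y'} Y'$ where $A$ is a subgraph meeting each $Y'$ in a ``coarsely dense in the right way'' fashion (the $\rho$-condition), and the $Y'$ are pairwise disjoint, then the restriction $H^k_\infsub(X',\mathcal Y';V) \to H^k_\infsub(A, \mathcal Y_A;V)$ is an isomorphism}, where $\mathcal Y_A = \{Y' \cap A\}$.

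**Next I would** prove this inclusion statement by the standard cone/extension argument adapted to the $\ell^\infty$-relative setting. For surjectivity: given a cocycle $\alpha$ on $(A,\mathcal Y_A)$, extend it to $X'$ by choosing for each vertex $x' \in X' \setminus A$ a ``nearest'' vertex $\pi(x') \in A$; crucially, if $x'$ lies in some $Y' \in \mathcal Y'$ the $\rho$-condition lets us choose $\pi(x')$ inside $Y' \cap A$ (apply it with a point $x' \in X'\setminus Y'$ witnessing... actually one chooses $\pi(x') \in Y'\cap A$ directly when $x' \in Y'$, and the $\rho$-condition guarantees $Y' \cap A$ is nonempty and coarsely close). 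Then define the extended cochain by $\bar\alpha(x_0,\dots,x_k) = \alpha(\pi(x_0),\dots,\pi(x_k))$ after correcting by a coboundary term built from the $2$-simplices $(x_i, \pi(x_i), \ast)$ — i.e. the usual homotopy/retraction formula. One checks: (a) $\bar\alpha$ vanishes on tuples inside a single $Y' \in \mathcal Y'$, because then all $\pi(x_i)$ lie in $Y'\cap A$ and $\alpha$ vanishes there (plus the correction terms also vanish, since they stay inside $Y'$); (b) $\bar\alpha$ is $\ell^\infty$ on each $C^R$, because $\pi$ moves points a bounded amount on bounded sets (this uses $\rho$ and $\rho_+$ to bound $d(x', \pi(x'))$, hence $d(\pi(x_i),\pi(x_j))$); (c) $\bar\alpha$ restricts to $\alpha$ on $A$ up to a coboundary, and $\bar\alpha$ is a cocycle. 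For injectivity: if $f^*[\alpha] = 0$, i.e. $\alpha|_A = \delta\beta$ for some $\beta \in C^{k-1}_\infsub(A,\mathcal Y_A;V)$, extend $\beta$ to $X'$ by the same retraction recipe to get $\bar\beta$, and show $\alpha - \delta\bar\beta$ is a cocycle on $(X',\mathcal Y';V)$ supported (after a further homotopy) on configurations touching $\bigcup Y'$, then kill it by a second application of the same extension argument relative to the $Y'$'s.

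**The main obstacle** I anticipate is the bookkeeping in verifying the relative vanishing, i.e. condition (a): one must be careful that every correction/homotopy term introduced stays inside a single $Y' \in \mathcal Y'$ whenever the input tuple does, which is exactly why the hypothesis demands the $Y'$ pairwise disjoint and demands that $\pi$ send $Y'$-points into $Y' \cap A$. A subtler point is that the $\rho$-hypothesis as stated only guarantees, for $y' \in Y'$ and \emph{some} $x' \notin Y'$, a point $y \in X$ with $d(y', f(y)) \le \rho(d(y',x'))$ — to turn this into ``$Y'\cap f(X)$ is $\rho(\text{const})$-dense in $Y'$ near the boundary of $Y'$'' one uses that in a connected graph every $y' \in Y'$ has a neighbour $x' \notin Y'$ at distance $1$ (if $Y' \ne X'$; the components of $X'$ entirely contained in a single $Y'$ are handled trivially since the cohomology there vanishes), giving uniform control with $\rho(1)$. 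Finally, the passage from the abstract pull-back $f^*$ to the concrete restriction map requires invoking \Cref{lemma:relative_induced} and \Cref{lemma:rel_close} to see that $f^*$ agrees, up to the quasi-isometry identification, with restriction to $(\tilde X, \tilde{\mathcal Y})$ — here one checks $f$ satisfies the sharp-containment condition \eqref{it:sharp_sub} and that its relative quasi-inverse (which exists by the $\rho_-$ lower bound and surjectivity onto $\tilde X$) is relatively close to a genuine quasi-inverse.
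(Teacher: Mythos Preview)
Your approach and the paper's share the same engine — a ``coarse retraction'' $\pi$ from $X'$ back toward $X$ — but the paper packages it far more efficiently. Rather than factor through an auxiliary pair and then argue surjectivity and injectivity separately at the cochain level, the paper simply defines $(\pi,\pi_\#)\colon (X',\mathcal Y')\to (X,\mathcal Y)$ with $\pi_\# = f_\#^{-1}$, and with $\pi(y')$ equal to the point of $\pi_\#(Y')$ whose $f$-image is closest to $y'$ (for $y'\in Y'$; otherwise any $f$-preimage). It then checks directly that $(\pi,\pi_\#)$ is relatively coarsely uniform and that both compositions $f\circ\pi$ and $\pi\circ f$ are \emph{relatively close} to the identity in the sense of \Cref{def:rel_close}; \Cref{lemma:rel_close} then finishes immediately. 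Your cochain-level extension $\bar\alpha = \pi^*\alpha$ is essentially the same map $\pi^*$, just unwrapped by hand, and your separate surjectivity/injectivity arguments are doing the work that the two ``relatively close to identity'' checks do in one stroke.

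Your factorisation step is vacuous: by hypothesis $X' = f(X)\cup\bigcup_{Y'\in\mathcal Y'} Y'$, so your $\tilde X$ equals $X'$ and nothing has been reduced.

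More seriously, there is a genuine error in your boundedness check. You claim that ``in a connected graph every $y'\in Y'$ has a neighbour $x'\notin Y'$ at distance $1$'', and from this that $\pi$ moves points by a uniformly bounded amount. This is false: in the intended application $Y'$ is a combinatorial horoball, and a vertex at depth $n$ is at distance $n$ from $X'\setminus Y'$. The retraction $\pi$ is \emph{not} uniformly bounded; $d(x',\pi(x'))$ can be arbitrarily large. The correct statement — and this is precisely why the paper works with the \emph{relative} notions of \Cref{def:rel_coarse_uniform} and \Cref{def:rel_close} rather than ordinary coarse maps — is that one only needs to bound $d(\pi(x'_i),\pi(x'_j))$ for tuples $(x'_0,\dots,x'_k)$ \emph{not all lying in a single} $Y'$. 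For such a tuple of diameter at most $D$, each $x'_i\in Y'_i$ admits some $x'_l\notin Y'_i$ within distance $D$, and only then does the $\rho$-hypothesis give a bound on $d(x'_i,f\circ\pi(x'_i))$. Tuples lying entirely inside one $Y'$ are handled not by bounding $\pi$ but because the relative cochain vanishes there, regardless of how far $\pi$ moves them. Your check (b) needs exactly this case split, not a global bound on $\pi$; as written, the argument does not go through.
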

\begin{proof}
    The assumption about $\rho_-$ implies the existence of a non-decreasing map $\rho^*_-\colon [0,\infty) \to [0,\infty)$ such that $d(x_1,x_2) \le \rho^*_-(d(f(x_1),f(x_2)))$ whenever $x_1,x_2$ do not belong to the same $Y \in \mathcal{Y}$.
    Recall also that there is a map $\rho_+\colon [0,\infty) \to [0,\infty)$ such that, under the same condition, $d(f(x_1),f(x_2)) \le \rho_+(d(x_1,x_2))$.

    We proceed by defining a map of pairs $(\pi,\pi_\#)\colon (X',\mathcal{Y}') \to (X,\mathcal{Y})$, which will be a ``relative coarse inverse'' of $(f,f_\#)$.
    Define $\pi_\#\colon \mathcal{Y}'\to\mathcal{Y}$ as the inverse of $f_\#$, and define $\pi\colon X' \to X$ as follows:
    \begin{itemize}
        \item If $y' \in Y'$ for some $Y' \in \mathcal{Y}'$, then $\pi(y') = y$, where $y \in \pi_\#(Y')$ minimizes the distance $d(y',f(y))$;
        \item If $x'$ does not belong to any such $Y'$, then $\pi(x') = x$ where $x \in X$ is some point with $f(x) = x'$.
    \end{itemize}
    
    We now check that $(\pi,\pi_\#)$ is relatively coarsely uniform.
    Take $x'_1,x'_2 \in X'$ not belonging to a common $Y' \in \mathcal{Y}'$, with $d(x'_1,x'_2) \le D$.
    We treat here the case where $x'_1 \in Y'_1$ and $x'_2 \in Y'_2$ for some distinct $Y'_1,Y'_2 \in \mathcal{Y}$, the other cases being similar and easier.
    Set $y_1 = \pi(x'_1)$ and $y_2 = \pi(x'_2)$.
    Then,
    \begin{align*}
    d(y_1,y_2) &\le \rho^*_-(d(f(y_1),f(y_2)))\\
    &\le \rho^*_-(d(f(y_1),x'_1) + d(x'_1,x'_2) + d(x'_2,f(y_2)))\\
    &\le \rho^*_-(\rho(D) + D + \rho(D)).
    \end{align*}

    Next, we check that the composition $(f\circ\pi,f_\#\circ\pi_\#)\colon (X',\mathcal{Y'}) \to (X',\mathcal{Y'})$ is relatively close to the identity.
    By construction, $f_\#\circ\pi_\#\colon \mathcal{Y}' \to \mathcal{Y}'$ is the identity.
    Take $x'_1,x'_2 \in X'$ as in the previous paragraph.
    Consider the case in which $x'_2 \in Y' \in \mathcal{Y}'$, leaving the other case to the reader.
    Set $y_2 = \pi(x'_2)$.
    Then,
    \[
    d(x'_1, f\circ\pi(x'_2)) =d(x'_1, f(y_2)) \le \rho(d(x'_1,x'_2)) \le \rho(D).
    \]
    If instead $x'_2$ does not belong to any $Y' \in \mathcal{Y}$, then $f\circ\pi(x'_2) = x'_2$, and $d(x'_1, f\circ\pi(x'_2)) \le D$.

    Then, we check the composition $(\pi\circ f,\pi_\#\circ f_\#)\colon  (X,\mathcal{Y}) \to (X,\mathcal{Y})$.
    Again, $\pi_\#\circ f_\#$ is the identity by construction.
    Take $x_1,x_2 \in X$ with $d(x_1,x_2) \le D$, not belonging to the same $Y \in \mathcal{Y}$.
    Then, $\hat x_2 = \pi\circ f(x_2) \in X$ is such that $f(x_2) = f(\hat x_2)$, and we have
    \[
    d(x_1, \hat x_2) \le \rho^*_-(d(f(x_1),f(\hat x_2))) = \rho^*_-(d(f(x_1),f(x_2))) \le \rho^*_-(\rho_+(D)).
    \]

    With both compositions being relatively close to the respective identity maps, we conclude by applying \Cref{lemma:rel_close}.
\end{proof}

We will use the following proposition to obtain the vanishing of relative $\ell^\infty$-cohomology, in the case where $X$ is hyperbolic.

\begin{prop}
\label{prop:rel_cohom_surj}
    Let $X$ be a connected graph and let $\mathcal Y$ be a collection of uniformly hyperbolic connected subgraphs. Then there exists a surjective map $\infcohom{X,\mathcal Y}{V}\to \infcohom{X}{V}$.
\end{prop}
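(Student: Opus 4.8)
The natural candidate for the map is the one induced in cohomology by the inclusion of cochain complexes $C^\bullet_\infsub(X,\mathcal Y;V)\hookrightarrow C^\bullet_\infsub(X;V)$: a relative cochain is in particular a cochain on $X$, and the coboundary operators agree, so this inclusion is a chain map and induces $H^\bullet_\infsub(X,\mathcal Y;V)\to H^\bullet_\infsub(X;V)$; the plan is to show that it is surjective in degree $2$. Using the definition of relative $\ell^\infty$-cohomology together with quasi-isometry invariance of both the relative and the absolute theory (\Cref{lemma:quasi_isometry}, applied with the empty family for the latter), I may first replace $(X,\mathcal Y)$ by a quasi-isometric model in which the members of $\mathcal Y$ are pairwise disjoint subgraphs, still uniformly hyperbolic; so from now on the $Y\in\mathcal Y$ are pairwise disjoint.

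Fix a $2$-cocycle $\phi\in Z^2_\infsub(X;V)$, representing an arbitrary class of $\infcohom{X}{V}$. The idea is to kill $\phi$ on every $Y$ by a coboundary, all at once. For a fixed $Y$, the restriction $\phi_Y$ of $\phi$ to triples of vertices of $Y$ is a $2$-cocycle for the $\ell^\infty$-complex of $Y$, and its norms are controlled by those of $\phi$ (the metric relevant for $Y$ dominates the one it inherits from $X$), uniformly in $Y$. Since the subgraphs are uniformly $\delta$-hyperbolic, the quantitative statement of \Cref{prop:hyp_vanishing} applies with an integer $R_0$ and a function $K$ depending only on $\delta$, giving for each $Y$ a cochain $g_Y\in C^1_\infsub(Y;V)$ with $\cbd g_Y=\phi_Y$ and $\lvert g_Y\rvert_\infty^R\le K(R,\lvert\phi\rvert_\infty^R)$ for every $R\ge R_0$ — hence, by monotonicity in $R$, a bound on $\lvert g_Y\rvert_\infty^R$ that is uniform over $\mathcal Y$ for each fixed $R$.

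Now I would assemble these local primitives into a single $g\colon C_1(X)\to V$, defined on basis elements by $g(x_0,x_1)=g_Y(x_0,x_1)$ when $x_0,x_1$ both lie in the (unique, by disjointness) $Y\in\mathcal Y$ containing them, and $g(x_0,x_1)=0$ otherwise. If a triple of vertices is contained in a single $Y$, then all three of its pairs are in $Y$, so $\cbd g$ agrees there with $\cbd g_Y=\phi_Y$; thus $\phi-\cbd g$ vanishes on every triple contained in a member of $\mathcal Y$, i.e.\ it is a relative cochain, and it is a cocycle because it is cohomologous to $\phi$. Provided $g\in C^1_\infsub(X;V)$, this exhibits $\phi-\cbd g\in Z^2_\infsub(X,\mathcal Y;V)$ whose class maps to $[\phi]$, proving surjectivity. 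To check $g\in C^1_\infsub(X;V)$ one must bound $g$ on $C_1^R(X)$ for each $R$: a pair of vertices at distance $\le R$ in $X$ contributes either $0$ or some $g_Y(x_0,x_1)$, and in the second case $\lVert g_Y(x_0,x_1)\rVert$ is controlled by the uniform bound from the previous paragraph.

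I expect this last verification to be the crux. The issue is to bound the glued cochain $g$ on each $C_1^R(X)$ \emph{uniformly} over the possibly infinite family $\mathcal Y$; this is precisely where uniform hyperbolicity of the $Y$'s is indispensable, since it is what allows \Cref{prop:hyp_vanishing} to produce the primitives $g_Y$ with a common modulus of boundedness, and it is also where one uses that each $Y$ meets the metric of $X$ in a uniformly controlled way, so that a pair of vertices which is $R$-close in $X$ remains uniformly close from the point of view of the $Y$ containing it. The passage to a model with pairwise disjoint subgraphs is a routine but genuinely needed preliminary, so that the local corrections $g_Y$ can be combined with no overlap conflict.
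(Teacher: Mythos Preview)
Your argument is exactly the paper's: pass to a model with disjoint subgraphs, invoke the quantitative form of \Cref{prop:hyp_vanishing} on each $Y$ to obtain primitives $g_Y$ with norms controlled uniformly in terms of those of $\phi$, extend by zero using disjointness, and subtract the resulting coboundary; the paper's proof occupies only a few lines and does not spell out the boundedness verification you carry out.

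The concern you flag in your final paragraph is genuine and worth noting: the bound $|g_Y|_\infty^R\le K(R,|\phi|_\infty^R)$ that \Cref{prop:hyp_vanishing} produces is with respect to the \emph{intrinsic} metric of $Y$, whereas membership of the glued cochain $g$ in $C^1_\infsub(X;V)$ requires a bound on pairs at $X$-distance $\le R$, and nothing in the hypotheses forces $d_Y$ to be uniformly comparable to $d_X|_Y$. The paper's proof does not address this point either. You are right to isolate it as the crux; it is not settled by uniform hyperbolicity alone.
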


\begin{proof}
    Up to replacing $(X,\mathcal Y)$ with a quasi-isometric pair, we can assume that all $Y \in \mathcal Y$ are disjoint. 
    
    Consider any cocycle $f$ on $X$. We need to find a cocycle cohomologous to $f$ which vanishes on all $Y\in\mathcal Y$. By Proposition \ref{prop:hyp_vanishing}, each restriction of $f$ to a $Y\in\mathcal Y$ is the coboundary of some $\phi_Y$, and the norm of $\phi_Y$ is bounded in terms of the norm of $f$. We can extend each $\phi_Y$ to $X$, by setting it to $0$ on simplices not contained in $Y$; we still denote the extension by $\phi_Y$. Then $f-\sum \delta\phi_Y$ is cohomologous to $f$ and it vanishes on all $Y\in\mathcal Y$, as required.
\end{proof}

We now recall the definition of the cusped space $\cusp{X,\mathcal Y}$, which is obtained by gluing combinatorial horoballs onto the various $Y\in\mathcal Y$.

\begin{definition}
    Let $Y$ be a graph.
    The combinatorial horoball with basis $Y$ is the graph with vertex set $\mathcal{V}_Y \times \mathbb{N}$ and edges of the following types:
    \begin{itemize}
        \item For every $y \in \mathcal{V}_Y$ and $n \in \mathbb{N}$, the vertices $(y,n)$ and $(y,n+1)$ are adjacent;
        \item For every $n \in \mathbb{N}$, and every pair of distinct vertices $y_1,y_2 \in \mathcal{V}_Y$, the vertices $(y_1,n)$ and $(y_2,n)$ are adjacent if and only if $d_Y(y_1,y_2) \le 2^n$.
    \end{itemize}

\end{definition}

\begin{definition}
    Let $X$ be a graph and $\mathcal{Y}$ be a collection of subgraphs of $X$.
    The cusped space $\cusp{X,\mathcal Y}$ associated to the pair $(X,\mathcal{Y})$ is the graph obtained from the disjoint union $X \sqcup \bigsqcup_{Y\in \mathcal{Y}} \mathcal{H}_Y$ by adding, for every $Y \in \mathcal{Y}$ and $y \in Y$, an edge joining $y \in X$  to $(y,0) \in \mathcal{H}_Y$.
    
    We denote by $\mathcal{H}_\mathcal{Y}$ the collection $\{\mathcal{H}_Y: Y \in \mathcal{Y}\}$, which is a family of disjoint subgraphs of $\cusp{X,\mathcal{Y}}$.
\end{definition}

Hyperbolicity of a cusped space is equivalent to (relative) metric hyperbolicity, and this is why the following is of interest to us.

\begin{prop}
\label{prop:cusp_rel_cohomo_0}
    Let $X$ be a connected graph and let $\mathcal Y$ be a collection of connected subgraphs. Suppose that $\cusp{X,\mathcal Y}$ is hyperbolic. Then $\infcohom{\cusp{X,\mathcal Y},\mathcal H_\mathcal Y}{V}=0$ for every $1$-injective Banach space $V$.
\end{prop}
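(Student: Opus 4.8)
The plan is to combine the excision result (\Cref{prop:excision}) with the vanishing coming from hyperbolicity and the surjectivity statement (\Cref{prop:rel_cohom_surj}), applied to an auxiliary pair built from the cusped space. Concretely, I would consider the pair $(\cusp{X,\mathcal Y}, \mathcal{H}_\mathcal{Y})$ and compare it, via a relatively coarsely uniform map, to a pair where the horoball subgraphs have been ``collapsed'' or replaced by something whose relative cohomology is easier to access. The cleanest route: show that $H^\bullet_\infsub(\cusp{X,\mathcal Y},\mathcal H_\mathcal Y;V)$ can be computed from the pair $(\cusp{X,\mathcal Y} , \mathcal{Y})$ itself — i.e., use that each horoball $\mathcal{H}_Y$ deformation-retracts (coarsely) onto its base $Y$, so the inclusion-type map $(\cusp{X,\mathcal Y},\mathcal{Y}) \to (\cusp{X,\mathcal Y},\mathcal{H}_\mathcal{Y})$, or a retraction in the other direction, satisfies the hypotheses of excision in \Cref{prop:excision}. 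The coarse retraction sends $(y,n)\mapsto y$ for horoball points and is the identity on $X$; one checks the four bullet conditions of \Cref{prop:excision} using the standard horoball distance estimate $d_{\mathcal{H}_Y}((y_1,n_1),(y_2,n_2)) \asymp n_1+n_2 + \log_2(1+d_Y(y_1,y_2))$, which in particular gives a lower Lipschitz-type bound $\rho_-$ away from the horoballs and controls the ``reaching in'' function $\rho$.

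Having reduced to $H^\bullet_\infsub(\cusp{X,\mathcal Y},\mathcal{Y};V)$, I would then argue that this vanishes in degree $2$ because $\cusp{X,\mathcal Y}$ is hyperbolic. The mechanism is \Cref{prop:rel_cohom_surj}: it applies with ambient space $\cusp{X,\mathcal Y}$ and the collection $\mathcal{Y}$ of subgraphs, provided the members of $\mathcal{Y}$ are uniformly hyperbolic inside $\cusp{X,\mathcal Y}$. But each $Y\in\mathcal{Y}$, sitting at the ``top'' of its horoball, is coarsely a bounded-geometry piece — in fact, inside the cusped space any two points of $Y$ at $Y$-distance $r$ are at $\cusp{X,\mathcal Y}$-distance $\asymp \log r$, so $Y$ with the induced metric is quasi-isometric to a point-like/uniformly hyperbolic space; more precisely, the subgraph $Y$ with its path metric need not be hyperbolic, but I should instead apply \Cref{prop:rel_cohom_surj} with the horoballs, or observe that what matters is that restrictions of cocycles to these subgraphs are coboundaries with controlled norm, which follows from \Cref{prop:hyp_vanishing} applied to the hyperbolic space $\cusp{X,\mathcal Y}$ together with the fact that a subspace where distances are logarithmically distorted carries only "small" chains. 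I would then invoke \Cref{prop:rel_cohom_surj}: since $\cusp{X,\mathcal Y}$ is hyperbolic, $H^2_\infsub(\cusp{X,\mathcal Y};V)=0$ by \Cref{prop:hyp_vanishing}, and the surjection $H^2_\infsub(\cusp{X,\mathcal Y},\mathcal{H}_\mathcal{Y};V)\to H^2_\infsub(\cusp{X,\mathcal Y};V)$ forces the source to be... no, surjectivity alone does not kill the source. So the real argument must go the other way: I need \emph{injectivity} of the comparison, not surjectivity.

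Therefore the correct structure is: (i) by \Cref{prop:rel_cohom_surj} (or directly), every cocycle $f$ on $\cusp{X,\mathcal Y}$ is cohomologous to one vanishing on all $\mathcal{H}_Y$, using that the horoballs are uniformly hyperbolic (they are, being combinatorial horoballs, which are uniformly hyperbolic — this is a standard fact I will cite or reprove via \Cref{prop:hyp_vanishing} with a uniform isoperimetric constant); (ii) given a \emph{relative} cocycle $f$ representing a class in $H^2_\infsub(\cusp{X,\mathcal Y},\mathcal{H}_\mathcal{Y};V)$, view it as an ordinary cocycle; since $\cusp{X,\mathcal Y}$ is hyperbolic, \Cref{prop:hyp_vanishing} produces $g\in C^1_\infsub(\cusp{X,\mathcal Y};V)$ with $\cbd g = f$ and norm control; (iii) the issue is that $g$ need not vanish on the $\mathcal{H}_Y$, so I must correct it: on each horoball $\mathcal{H}_Y$, $f$ vanishes, so $g|_{\mathcal{H}_Y}$ is a cocycle on the hyperbolic space $\mathcal{H}_Y$, hence $g|_{\mathcal{H}_Y} = \cbd \psi_Y$ for some $\psi_Y\in C^0_\infsub(\mathcal{H}_Y;V)$ with controlled norm (again \Cref{prop:hyp_vanishing} for the uniformly hyperbolic horoballs); extend $\psi_Y$ by zero and set $g' = g - \sum_Y \cbd\psi_Y$, which still satisfies $\cbd g' = f$, lies in $C^1_\infsub$, and now vanishes on each $\mathcal{H}_Y$ — i.e. $g'\in C^1_\infsub(\cusp{X,\mathcal Y},\mathcal{H}_\mathcal{Y};V)$. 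This exhibits $f$ as a relative coboundary, proving $H^2_\infsub(\cusp{X,\mathcal Y},\mathcal{H}_\mathcal{Y};V)=0$.

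The main obstacle is the bookkeeping of norms and supports in step (iii): one must ensure that summing the corrections $\cbd\psi_Y$ over all (possibly infinitely many) horoballs still gives an element of $C^1_\infsub$, i.e. that for each $R$ the restriction to $C_1^R$ is bounded. This is where disjointness of the horoballs and the \emph{uniformity} of the hyperbolicity constants across $\mathcal{Y}$ are essential: because distinct horoballs are far apart, a bounded-diameter chain meets at most boundedly many of them, and the uniform bound from \Cref{prop:hyp_vanishing} (whose constants $R_0, K$ depend only on $\delta$) gives a single bound $K(R,\norm{g}^{R'}_\infty)$ valid simultaneously for all $Y$. I would also need the standard fact that combinatorial horoballs are $\delta$-hyperbolic with $\delta$ independent of the base graph $Y$; this lets me apply \Cref{prop:hyp_vanishing} uniformly. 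A minor secondary point is checking that $\cusp{X,\mathcal Y}$ indeed has a finite homological isoperimetric function so that the hypotheses of the earlier propositions are in force — but this is immediate from hyperbolicity via \Cref{lemma:hyp_filling}.
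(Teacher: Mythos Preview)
Your final outline (steps (ii)--(iii)) starts exactly as the paper does: take a relative $2$-cocycle $f$, use hyperbolicity of $\cusp{X,\mathcal Y}$ and \Cref{prop:hyp_vanishing} to write $f=\cbd g$, and observe that each restriction $g|_{\mathcal H_Y}$ is a $1$-cocycle. The divergence, and the genuine gap, is in how you propose to correct $g$. You want to write $g|_{\mathcal H_Y}=\cbd\psi_Y$ for some $\psi_Y\in C^0_\infsub(\mathcal H_Y;V)$ and then subtract $\sum_Y\cbd\psi_Y$. But \Cref{prop:hyp_vanishing} concerns $H^2_\infsub$, not $H^1_\infsub$, and in fact $H^1_\infsub$ of a combinatorial horoball does not vanish: by \Cref{lem:lip_primitive} (and the corollary following it), $H^1_\infsub$ is the space of Lipschitz functions modulo bounded ones, and on a horoball the height function $(y,n)\mapsto n$ is Lipschitz but unbounded. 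Concretely, the $1$-cocycle $((y,n),(y',n'))\mapsto n'-n$ lies in $C^1_\infsub$ but has no primitive in $C^0_\infsub$. So step~(iii) fails as stated, and no amount of uniform-hyperbolicity bookkeeping repairs it.

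This is exactly the obstacle the paper's proof is designed around. Rather than cobounding the $g|_{\mathcal H_Y}$ individually, the paper \emph{extends} them simultaneously to a single $1$-cocycle $\psi$ on all of $\cusp{X,\mathcal Y}$ via \Cref{prop:extend}, which uses the BBF projection axioms; one then subtracts $\psi$ from $g$ (note $\cbd\psi=0$, so $\cbd(g-\psi)=f$ still holds) to obtain a primitive vanishing on each horoball. Verifying the projection axioms for the horoballs inside the hyperbolic cusped space is the remaining work. Your excision/retraction idea from the first paragraph does not sidestep this either: the collapse $(y,n)\mapsto y$ is not coarsely Lipschitz in the cusped metric (horoball points at bounded distance can have bases at arbitrarily large $Y$-distance), so the hypotheses of \Cref{prop:excision} are not met.
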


In order to prove Proposition \ref{prop:cusp_rel_cohomo_0} we will use the following result, which we prove in the next section, as it requires a separate set of tools. The result is about extending 1-cocycles defined on subgraphs to the whole graph, and it uses the axiomatic setup of \cite{BBF}, and for the proof we will use its refinement from \cite{BBFS}.

\begin{prop}
\label{prop:extend}
    Let $X$ be a graph and let $\mathcal Y$ be a collection of disjoint connected full subgraphs. Suppose that we have assigned to each $Y\in\mathcal Y$ a map $\pi_Y\colon X\to 2^Y$ satisfying the following properties for some constant $B$, where we denote $d_Y(\cdot,\cdot)=diam(\pi_Y(\cdot)\cup\pi_Y(\cdot))$.
    \begin{enumerate}
    \item (Bounded projection) If $W\in\mathcal Y$ and either $Y\in\mathcal Y$ is distinct from $W$ or $Y\in X$, then $\pi_W(Y)$ has diameter at most $B$.
    \item (Coarse Lipschitz)  For all $x,y\in X$, we have $d_Y(x,y)\leq B d(x,y)+B$.
        \item (Behrstock inequality) If $W,Y\in\mathcal Y$ are distinct and $x\in X$, then
        $$\min \{d_W(Y,x)),d_Y(W,x))\}\leq B.$$
        \item (Large projections) If $W,Y\in\mathcal Y$ are distinct, then 
        $$|\{Z: d_Z(W,Y)\geq B\}|<+\infty.$$
    \end{enumerate}
    Consider any family of $1$-cocycles $\{\phi_Y\}$ on the $Y\in \mathcal Y$ such that the restrictions $\phi_Y|_{C^1_1(Y,V)}$ have uniformly bounded norm. Then there exists a $1$-cocycle $\phi$ on $X$ which extends all $\phi_Y$.
\end{prop}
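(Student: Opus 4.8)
The plan is to use the projection data $\{\pi_Y\}$ to build a "distance-like" structure on $X$ — essentially the projection complex / quasi-tree machinery of \cite{BBF} and its refinement in \cite{BBFS} — and then define $\phi$ by transporting each $\phi_Y$ along canonical paths. Concretely, I would first recall from \cite{BBFS} that, after enlarging $B$, the data satisfying axioms (1)--(4) produces for each pair $x,y \in X$ a well-defined finite ordered set $\mathcal{Y}(x,y) \subseteq \mathcal{Y}$ of subgraphs that "separate" $x$ from $y$ (those $Y$ with $d_Y(x,y)$ large), together with a total order on $\mathcal{Y}(x,y)$ coming from the Behrstock inequality, and entry/exit points of $x$ and $y$ in each such $Y$. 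The heart of the construction is to define
\[
\phi(x,y) = \sum_{Y \in \mathcal{Y}(x,y)} \phi_Y\big(\pi_Y(x), \pi_Y(y)\big),
\]
interpreting $\phi_Y(\pi_Y(x),\pi_Y(y))$ as $\phi_Y$ evaluated on a geodesic in $Y$ between (a choice of points in) $\pi_Y(x)$ and $\pi_Y(y)$; since $\phi_Y$ is a $1$-cocycle this value depends only on the endpoints up to the contribution of the $\pi_Y$-sets, whose diameters are controlled, and the uniform bound on $\phi_Y|_{C_1^1(Y,V)}$ keeps each summand bounded.

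Next I would verify that $\phi$ so defined is a $1$-cochain in $C^1_\infsub(X;V)$: for $d(x,y) \le R$, the coarse Lipschitz axiom (2) bounds each $d_Y(x,y)$, and the "large projections" axiom (4) together with the distance formula of \cite{BBFS} bounds the number of $Y \in \mathcal{Y}(x,y)$; hence $\|\phi(x,y)\|$ is bounded in terms of $R$ and the uniform bound on the $\phi_Y$. Then I would check the cocycle identity $\phi(x,y) + \phi(y,z) = \phi(x,z)$: the key combinatorial fact, standard in this setup, is that $\mathcal{Y}(x,z)$ differs from $\mathcal{Y}(x,y) \cup \mathcal{Y}(y,z)$ only by subgraphs $Y$ for which the three projections $\pi_Y(x),\pi_Y(y),\pi_Y(z)$ are coarsely aligned or coarsely coincide; using that each $\phi_Y$ is itself a cocycle on $Y$, the contributions from such $Y$ either telescope or cancel up to bounded error, and one arranges the choices of entry/exit points consistently so that the error is exactly zero. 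Finally, restricting $\phi$ to a $Y_0 \in \mathcal{Y}$: for $x,y \in Y_0$ we have $\mathcal{Y}(x,y) = \{Y_0\}$ (all other projections being $B$-bounded by axiom (1)), so $\phi|_{Y_0} = \phi_{Y_0}$, possibly after correcting $\phi$ by a coboundary to absorb the discrepancy between $\pi_{Y_0}(x)$ and $x$ itself — here one uses that $\phi_{Y_0}$ is a cocycle on the connected full subgraph $Y_0$, so this discrepancy is itself a coboundary term one can fold in.

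The main obstacle I anticipate is making the cocycle identity exact rather than merely coarse: the projection complex technology naturally gives quantities that agree \emph{up to bounded error}, but a genuine $1$-cocycle must satisfy $\delta\phi = 0$ on the nose. The way I would handle this is to not define $\phi(x,y)$ by a single closed formula, but rather to fix, once and for all, a basepoint $o \in X$ and define $\phi(x,y)$ as $g(y) - g(x)$ for a suitable function $g\colon \vertx{X} \to V$ built by choosing, for each vertex $x$, a specific "normal form" path from $o$ through the ordered sequence $\mathcal{Y}(o,x)$; then $\delta\phi = 0$ holds automatically, and the real work shifts to proving $\phi \in C^1_\infsub$ (bounding $\|g(y)-g(x)\|$ when $d(x,y)\le R$) and proving $\phi|_Y - \phi_Y$ is a coboundary on each $Y$ — both of which are norm estimates of exactly the type the \cite{BBFS} distance formula is designed to supply. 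The choice of consistent normal forms, so that the path to $x$ and the path to $y$ share a long common prefix when $x,y$ are close, is the delicate point and is where the Behrstock inequality and the finiteness in axiom (4) do the essential work.
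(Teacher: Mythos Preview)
Your second plan --- fix a basepoint, build a function $g\colon \vertx{X}\to V$ via the BBFS ordering, and set $\phi=\delta g$ --- is exactly the paper's approach, and your identification of the Lipschitz estimate on $g$ as the crux is correct. Two simplifications the paper makes that sharpen your outline: first, it immediately takes primitives $f_Y$ of each $\phi_Y$ (Lemma~\ref{lem:lip_primitive}) and defines $g|_Y = f_Y + c_Y$ for a constant $c_Y$ chosen inductively along the order on $\prel{W_0}{Y}$, which guarantees $\phi|_Y = \phi_Y$ \emph{exactly} --- so your worry about ``$\phi|_Y - \phi_Y$ being a coboundary'' evaporates by construction and no after-the-fact correction is needed; second, it first enlarges $\mathcal{Y}$ by throwing in all singleton vertices, so the $Y$'s cover $\vertx{X}$ and $g$ need only be defined $Y$-by-$Y$ rather than via normal-form paths for arbitrary $x$. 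The Lipschitz verification then becomes a finite case analysis on how $\prel{W_0}{Y}$ and $\prel{W_0}{Z}$ overlap for adjacent $y\in Y$, $z\in Z$, using the strengthened Behrstock inequality~(3') from \cite{BBFS}.
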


\begin{proof}[Proof of Proposition \ref{prop:cusp_rel_cohomo_0}]

    Consider any 2-cocycle $f$ on $\cusp{X,\mathcal Y}$ which vanishes on every horoball $\mathcal H_Y \in \mathcal H_\mathcal{Y}$.
    In view of Proposition \ref{prop:hyp_vanishing}, we have that $f=\delta \phi$ for some 1-cochain $\phi$ on $\cusp{X,\mathcal Y}$.
    Since $f$ vanishes on every horoball $\mathcal{H}_Y \in\mathcal H_\mathcal{Y}$, the restriction $\phi_Y$ of $\phi$ to such an $\mathcal{H}_Y$ is a cocycle.
    We now wish to apply Proposition \ref{prop:extend} to find a 1-cocycle $\psi$ on $\cusp{X,\mathcal Y}$ which restricts to $\phi_Y$ on each $\mathcal{Y}\in\mathcal H_\mathcal Y$. Given such a $\psi$, we have that $\delta(\phi-\psi)=f$, and $\phi-\psi$ vanishes on every horoball, proving the assertion.

    Hence, it remains to prove that Proposition \ref{prop:extend} can be applied in our situation, on the graph $\cusp{X,\mathcal{Y}}$ and the family of subgraphs $\mathcal H_\mathcal{Y}$.
    We consider the projection maps $\pi_{\mathcal{H}_Y}\colon \cusp{X,\mathcal{Y}} \to 2^{\mathcal{H}_Y}$  that send a vertex $v \in \cusp{X,\mathcal{Y}}$ to the set of vertices in $\mathcal{H}_Y$ with minimum distance from $v$.
    \begin{itemize}
        \item \emph{$\pi_{\mathcal{H}_Y}(v)$ has finite diameter, with a uniform bound independent of $v$ and $Y$.}
        We can assume that $v \notin \mathcal{H}_Y$.
        Points in $\pi_{\mathcal H_Y}(v)$ are of the form $(y,0) \in \mathcal{H}_Y$.
        Take two such points $(y_1,0)$, $(y_2,0)$ minimising the distance from $v$.
        Geodesics from $v$ to these two points can be prolonged deeper in the horoball, reaching vertices $(y_1,m)$ and $(y_2,m)$ which, if $m \in \mathbb{N}$ is fixed sufficiently big, are adjacent.
        Prolong further the two geodesics, arriving to a common endpoint coinciding with the midpoint of the edge joining $(y_1,m)$ to $(y_2,m)$.
        We have formed a bigon, whose width is bounded from above because of the hyperbolicity of $\cusp{X,\mathcal Y}$; in turn, this gives a bound for the distance between $(y_1,0)$ and $(y_2,0)$.
        \item \emph{The horoballs $\mathcal{H}_Y$ are uniformly quasi-convex.}
        Take $(y_1,n_1), (y_2,n_2) \in \mathcal{H}_Y$.
        Then, take a big enough $m \in \mathbb{N}$ so that, going deeper in the horoball, $(y_1,m)$ and $(y_2,m)$ are adjacent (the case $y_1 = y_2$ is easier).
        Notice that the geodesics in $\cusp{X,Y}$ joining $(y_1,n_1)$ to $(y_1,m)$ and $(y_2,n_2)$ to $(y_1,m)$ are contained in $\mathcal{H}_Y$.
        Therefore, any geodesic from $(y_1,n_1)$ to $(y_2,n_2)$ must be contained in a fixed neighbourhood of $\mathcal{H}_Y$ that depends only on the hyperbolicity constant of $\cusp{X,\mathcal Y}$.
        \item \emph{Given distinct $Y,Z \in \mathcal{Y}$, the projection $\pi_{\mathcal{H}_Y}(\mathcal{H}_Z)$ has uniformly bounded diameter.}
        Suppose that, for arbitrarily large $D$, there are points $v_1,v_2 \in \mathcal{H}_Z$ and $w_1,w_2 \in \mathcal{H}_Y$ with $w_i \in \pi_{\mathcal{H}_Y}(v_i)$ and $d(w_1,w_2) \ge D$.
        Prolong a geodesic from $v_1$ to $w_1 = (y_1,0)$ deeper in the horoball, reaching a certain $(y_1,m)$, and do the same with $v_2$ and $w_2 = (y_2,0)$, reaching $(y_2,m)$, so that the new endpoints are adjacent.
        Consider a geodesic triangle with vertices $v_1,v_2$ and the midpoint of the edge from $(y_1,m)$ and $(y_2,m)$.
        Since we know that $w_1$ and $w_2$ are far apart (if $D$ is big enough), the hyperbolicity of $\cusp{X,\mathcal Y}$ implies that there is a uniform $R$ (depending only on the hyperbolicity constant) such that both $w_1$ and $w_2$ are at distance $\le R$ from the geodesic $[v_1,v_2]$.
        Since horoballs are quasi-convex, by possibly changing $R$ with a bigger constant, we conclude that $w_1$ and $w_2$ are at distance at most $R$ from $\mathcal{H}_Z$.
        
        Summarizing, there is a constant $R$ such that, for arbitrarily large values of $D$, there are pairs of vertices $w_1,w_2 \in \mathcal{H}_Y$ and $u_1,u_2 \in \mathcal{H}_Z$ with $d(w_1,w_2) \ge D$, $d(w_1,u_1) \le R$ and $d(w_2,u_2) \le R$.
        Let $\delta$ be the hyperbolicity constant of $\cusp{X;\mathcal Y}$ (so that triangles are $\delta$-thin and $\delta$-slim, as in \cite[Remark 2.10]{GM2008}).
        Up to replacing $R$ with $R + \delta$, we can assume that the points $w_1$ and $w_2$ are at depth bigger than $\delta$ in $\mathcal{H}_Y$.
        This implies (see \cite[Lemma 3.26]{GM2008}) that geodesics from $w_1$ to $w_2$ stay in $\mathcal{H}_Y$, and reach depths of order $\log(D)$ in the horoball.
        Hence, such a geodesic is not uniformly close to a geodesic from $u_1$ to $u_2$ (which must stay $\delta$-close to $\mathcal{H}_Z$), contradicting the hyperbolicity of $\cusp{X,\mathcal{Y}}$.
    \end{itemize}
    The conclusion follows because any family of quasi-convex subspaces of a hyperbolic space, with uniformly bounded projections onto each other, satisfies the hypotheses of Proposition \ref{prop:extend}, see, e.g. the proof of \cite[Lemma 4.47]{DGO17}.
\end{proof}

We note that putting together Proposition \ref{prop:cusp_rel_cohomo_0}, Proposition \ref{prop:rel_cohom_surj}, and Theorem \ref{thm:hyp_main} we get:

\begin{corollary}
\label{cor:cusp_hyp}
    Let $X$ be a connected graph and let $\mathcal Y$ be a collection of connected subgraphs. Suppose that $\cusp{X,\mathcal Y}$ has finite isoperimetric function. Then $\cusp{X,\mathcal Y}$ is hyperbolic if and only if $\infcohom{\cusp{X,\mathcal Y},\mathcal H_\mathcal Y}{V}=0$ for every $1$-injective Banach space $V$.
\end{corollary}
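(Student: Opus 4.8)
The plan is to assemble the statement from the three results quoted just before it, handling the two implications separately; essentially all of the work has already been done, so the proof is a short combination.

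For the implication ``hyperbolic $\Rightarrow$ vanishing'' I would simply invoke Proposition \ref{prop:cusp_rel_cohomo_0}: its hypotheses are precisely that $\mathcal Y$ is a collection of connected subgraphs and that $\cusp{X,\mathcal Y}$ is hyperbolic, and its conclusion is exactly $\infcohom{\cusp{X,\mathcal Y},\mathcal H_\mathcal Y}{V}=0$ for all $1$-injective Banach spaces $V$. Note that the finite isoperimetric hypothesis plays no role in this direction.

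For the converse I would argue as follows. First, recall that every combinatorial horoball is $\delta$-hyperbolic for a universal constant $\delta$, independent of its base graph (the standard estimate for combinatorial horoballs, see \cite{GM2008}); hence $\mathcal H_\mathcal Y$ is a collection of uniformly hyperbolic subgraphs of $\cusp{X,\mathcal Y}$, and Proposition \ref{prop:rel_cohom_surj} applies with $\cusp{X,\mathcal Y}$ in place of $X$ and $\mathcal H_\mathcal Y$ in place of $\mathcal Y$. This produces a surjection from $\infcohom{\cusp{X,\mathcal Y},\mathcal H_\mathcal Y}{V}$ onto $\infcohom{\cusp{X,\mathcal Y}}{V}$ for every $V$. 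Taking $V=\linfn=\ell^\infty(\mathbb N,\R)$, which is $1$-injective, the source vanishes by hypothesis, so $\infcohom{\cusp{X,\mathcal Y}}{\linfn}=0$. Since $\cusp{X,\mathcal Y}$ has a finite isoperimetric function, it has a finite homological isoperimetric function by Lemma \ref{lemma:fif_fhif}; therefore Theorem \ref{thm:hyp_main} (concretely, the implication \eqref{item_hyp_main_univ} $\Rightarrow$ \eqref{item_hyp_main_hyp}, i.e.\ Proposition \ref{prop:vanishing_hyp}) yields that $\cusp{X,\mathcal Y}$ is hyperbolic.

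Since the substance of both directions is carried by the cited results, the only genuine point to pin down is that $\mathcal H_\mathcal Y$ satisfies the ``uniformly hyperbolic'' hypothesis of Proposition \ref{prop:rel_cohom_surj}; I expect this to be the main — and minor — obstacle, dispatched by the standard hyperbolicity estimate for combinatorial horoballs. One should also record explicitly that $\linfn$ is $1$-injective, so that a single coefficient space suffices to run both Proposition \ref{prop:rel_cohom_surj} and Theorem \ref{thm:hyp_main}.
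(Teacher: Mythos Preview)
Your proposal is correct and follows exactly the approach the paper has in mind: the paper's own proof is literally the one-line remark ``putting together Proposition \ref{prop:cusp_rel_cohomo_0}, Proposition \ref{prop:rel_cohom_surj}, and Theorem \ref{thm:hyp_main}'', and you have spelled out precisely how these three ingredients combine, including the two small points (uniform hyperbolicity of horoballs, and passing from a finite isoperimetric function to a finite homological one via Lemma \ref{lemma:fif_fhif}) that the paper leaves implicit.
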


\subsection{The group case}
Let $G$ be a group and let $\mathcal{H} = \{H_i\}_{i\in I}$ be a parametrised family of subgroups of $G$ (repetitions are allowed).
For any $G$-module $M$, Bieri and Eckmann \cite{BE1978} defined the relative cohomology $H^k(G,\mathcal{H};M)$.
By taking $M = \ell^\infty(G,V)$, where $V$ is a normed vector space, one obtains the ``bounded-valued'' (or ``$\ell^\infty$'') relative cohomology $H_\infsub^k(G,\mathcal{H};V) = H^k(G,\mathcal{H};\ell^\infty(G,V))$ which has been considered in \cite{Mil2021} (generalising the definition given previously in \cite{GH2009}).
Here, $\ell^\infty(G,V)$ is the vector space of functions $G\to V$ with bounded image, endowed with the usual action of $G$ by multiplication (on the left) on the argument.

We now relate relative $\ell^\infty$-cohomology and the relative cohomology discussed above.

\begin{prop}\label{prop:group_case}
    Let $G$ be a finitely generated group, and let $\mathcal{H} = \{H_i\}_{i\in I}$ be a finite family of subgroups of $G$.
    Let $X$ be the Cayley graph of $G$ with respect to a finite generating set, and let $\mathcal{Y}$ be the family of subgraphs whose members correspond to the cosets $gH_i$, for every $i \in I$ and $gH_i \in G/H_i$.
    Then, $H_\infsub^k(G,\mathcal{H};V) \cong H_\infsub^k(X,\mathcal{Y};V)$ for any $k \ge 0$ and any normed vector space $V$.
\end{prop}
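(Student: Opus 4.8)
The plan is to construct an explicit chain-homotopy equivalence between the two cochain complexes computing $H_\infsub^k(G,\mathcal H;V)$ and $H_\infsub^k(X,\mathcal Y;V)$, using the standard translation between group cohomology and the simplicial-type cohomology of the Cayley graph. Recall that $H^k(G,\mathcal H;\ell^\infty(G,V))$ can be computed from the relative bar-type resolution: one has the complex of $G$-modules $0 \to \bigoplus_i \Z[G/H_i] \to \Z[G] \to \Z \to 0$ (or rather the mapping cone of the map of free resolutions of $\Z$ and of $\bigoplus_i \Z[G/H_i]$), and applying $\mathrm{Hom}_G(-,\ell^\infty(G,V))$ gives a complex whose cohomology is the relative group cohomology. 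On the other side, $C^k_\infsub(X;V)$ is the complex of $G$-\emph{equivariant-up-to-bounded} functions on tuples of vertices, which is exactly $\mathrm{Hom}_G(C_k(X),\ell^\infty(G,V))$ once one identifies $C_k(X)\otimes$\dots; the key point is that $\ell^\infty$-cochains on $X$ that are bounded on each $C_i^R(X)$ correspond precisely to $\ell^\infty(G,V)$-valued $G$-cochains because bounded diameter in $X=\cay{G,S}$ is the same as boundedness in the word metric, $S$ being finite.

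First I would set up the dictionary carefully: because $S$ is finite, the condition ``$f|_{C_i^R(X)}$ is bounded for all $R$'' for a $G$-invariant-up-to-$\ell^\infty(G,V)$ cochain is automatic, so $C^k_\infsub(X;V)\cong \mathrm{Hom}_G(\Z[\vertx X^{k+1}],\ell^\infty(G,V))$ with $\vertx X=G$; and the ``full bar resolution'' $\Z[G^{\bullet+1}]$ is a free resolution of $\Z$, so this computes $H^k(G;\ell^\infty(G,V))$. The relative subcomplex condition $f|_{(\vertx Y)^{k+1}}=0$ for $Y$ the subgraph on a coset $gH_i$ translates, under $\mathrm{Hom}_G$, into the vanishing of the restriction along the induced maps $\Z[G/H_i\text{-orbit data}]\to \Z[G^{\bullet+1}]$, i.e. exactly the condition defining the Bieri–Eckmann relative cochains once one uses the bar resolutions of $\bigoplus_i\Z[G/H_i]$. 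I would then make this precise by writing down the short exact sequence of cochain complexes $0\to C^\bullet_\infsub(X,\mathcal Y;V)\to C^\bullet_\infsub(X;V)\to \bigoplus_i C^{\bullet-?}_\infsub(\text{cosets};V)\to 0$ and comparing its long exact sequence with the Bieri–Eckmann long exact sequence $\cdots\to H^k(G,\mathcal H;M)\to H^k(G;M)\to \bigoplus_i H^k(H_i;M)\to\cdots$, using that $H^k(G;\ell^\infty(G,V))\cong H^k_\infsub(X;V)$ (the absolute case, which is presumably folklore or Elek) and the analogous identification $H^k(H_i;\ell^\infty(G,V))\cong H^k_\infsub(gH_i;V)$ via Shapiro's lemma together with the fact that restricting an $\ell^\infty(G,V)$-cochain to a coset is the same, up to the $H_i$-module structure, as a $\ell^\infty(H_i, \ell^\infty(H_i\backslash G, V))$-cochain, and $\ell^\infty(H_i\backslash G,V)$ is still a normed space. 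Then the five lemma gives the desired isomorphism in all degrees.

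The main obstacle, I expect, is the identification of the ``term in the middle'' of the short exact sequence of complexes, i.e.\ getting Shapiro's lemma to produce exactly $H^k_\infsub$ of the coset subgraphs with coefficients in $V$ rather than in some larger space: one must check that $\mathrm{Hom}_G\big(\mathrm{Ind}_{H_i}^G(\text{bar resolution of }\Z),\ell^\infty(G,V)\big)\cong \mathrm{Hom}_{H_i}(\text{bar res.},\ell^\infty(H_i, \ell^\infty(H_i\backslash G,V)))$ and that the sup over $H_i\backslash G$ of bounded $V$-valued functions behaves well, so that after the dust settles the cohomology groups appearing are $H^k_\infsub$ of a single coset (quasi-isometric to $\cay{H_i}$) with coefficients in the injective-ish space $\ell^\infty(H_i\backslash G, V)$ — and crucially that this matches what the subgraph $\mathcal Y$-quotient complex computes. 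A cleaner alternative, which I might prefer to execute, is to avoid Shapiro entirely: directly build a $G$-equivariant chain homotopy equivalence between the relative bar resolution of the pair $(G,\mathcal H)$ and the (relative, $G$-equivariant) chain complex $C_\bullet(X,\mathcal Y)$ built from tuples of vertices modulo tuples inside a coset, using the contractibility statement in Remark \ref{rmk:ZisB} (cycles are boundaries, via coning) to produce the homotopies, and then apply $\mathrm{Hom}_G(-,\ell^\infty(G,V))$. Either way the routine but slightly delicate point is bookkeeping the finiteness of $S$ to pass between ``bounded on each $C_i^R$'' and ``$\ell^\infty(G,V)$-valued'', and bookkeeping the finiteness of $I$ so that the direct sum over $i\in I$ commutes with everything in sight.
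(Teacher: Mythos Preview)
Your strategy is workable in principle but considerably more involved than needed, and you have not confronted the main obstacle, which is not Shapiro's lemma but \emph{disjointness}. The cosets $gH_i$ for varying $i$ are not pairwise disjoint in $X$ (they all contain $e$), so by the conventions of Section~\ref{sec:relative} you may not compute $H_\infsub^k(X,\mathcal Y;V)$ as the cohomology of cochains on $G^{k+1}$ vanishing on tuples in a single coset; you must first pass to a quasi-isometric model with disjoint subgraphs. This is precisely where your short exact sequence breaks down: when the subgraphs overlap, restrictions to them are not independent and the quotient complex you wrote with a question mark does not decompose as the direct sum you want. Your proposed Shapiro/five-lemma route could be salvaged only after fixing this, and even then it would require setting up a long exact sequence for relative $\ell^\infty$-cohomology that the paper never develops.

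The paper avoids long exact sequences, Shapiro's lemma and chain homotopies altogether by choosing the disjoint model so that the two cochain complexes become \emph{literally the same}. One takes the standard (Mineyev--Yaman) complex computing $H^k(G,\mathcal H;\ell^\infty(G,V))$ to consist of $G$-equivariant maps $(G\times I)^{k+1}\to \ell^\infty(G,V)$ vanishing on tuples lying in a single $gH_i\times\{i\}$; evaluating at $e\in G$ rewrites this as $V$-valued maps on $(G\times I)^{k+1}$ bounded on $G$-orbits. On the graph side, one builds $X'$ with vertex set $G\times I$ (one copy of $X$ per $i\in I$, with vertical edges $(g,i)\sim(g,j)$) and takes $\mathcal Y'=\{gH_i\times\{i\}\}$; these subgraphs are now disjoint, and $(X',\mathcal Y')$ is quasi-isometric to $(X,\mathcal Y)$. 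Then $C^k_\infsub(X',\mathcal Y';V)$ consists of $V$-valued maps on $(G\times I)^{k+1}$ bounded on sets of tuples of bounded diameter, with the same vanishing condition. Since $S$ and $I$ are finite, bounded-on-$G$-orbits and bounded-on-bounded-diameter-sets coincide, so the two complexes are identical. Your dictionary $C^k_\infsub(X;V)\cong \mathrm{Hom}_G(\Z[G^{k+1}],\ell^\infty(G,V))$ is exactly this observation in the absolute case; the point is that in the relative case one should replace $G$ by $G\times I$ on both sides rather than reach for a five-lemma argument.
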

\begin{proof}
    For any $\Gamma$-module $M$, the relative cohomology $H^k(G,\mathcal{H};M)$ can be realised as the cohomology of the following standard cochain complex which has been considered, e.g. in \cite{MY,Fra2018}.
    For any $k \ge 0$, define $C^k(G,\mathcal{H};M)$ as the space of functions $\alpha\colon (G\times I)^{k+1} \to M$ satisfying the following properties:
    \begin{itemize}
        \item $\alpha$ is $G$-equivariant ($G$ acts on $G\times I$ by multiplication on the left on the first factor, and on $(G\times I)^{k+1}$ diagonally);
        \item $\alpha((g_0,i), \dots, (g_k,i)) = 0$ (note that all second coordinates are equal) when $g_j H_{i} = g_0 H_{i}$ for every $j$.
    \end{itemize}
    Then, the relative cohomology is obtained by taking the cohomology of the complex $C^\bullet(G,\mathcal{H};M)$, with the usual coboundary maps performing alternating sums, erasing one argument at a time.

    In our case, $M = \ell^\infty(G,V)$, and $C^k(G,\mathcal{H};M) = C^k(G,\mathcal{H};\ell^\infty(G,V))$ can be described equivalently as the space of functions $\alpha\colon (G\times I)^{k+1} \to V$ where the same vanishing condition as before is imposed, but we waive the $G$-equivariance property by replacing it with the following:
    \begin{itemize}
        \item $\alpha$ is bounded on every orbit of the action of $G$ on $(G\times I)^{k+1}$.
    \end{itemize}
    We denote this space of functions by $C_\infsub^k(G,\mathcal{H};V)$.
    The isomorphism between $C_\infsub^k(G,\mathcal{H};V)$ and $C^k(G,\mathcal{H};\ell^\infty(G,V))$ is obtained by evaluating $\ell^\infty(G,V)$-valued cochains on the identity of $G$; see, e.g. \cite{Mil2021} for details.
    
    Consider a graph $X'$ with vertex set $G\times I$ and edges of two kinds:
    \begin{itemize}
        \item $(g,i_1)$ is adjacent to $(g,i_2)$ for every $g \in G$ and $i_1,i_2 \in I$;
        \item $(g,i)$ is adjacent to $(gs,i)$ for every $g \in G$, $i \in I$ and $s$ in the fixed finite generating set.
    \end{itemize}
    By construction, each layer $G\times\{i\}$ is isomorphic to the Cayley graph $X$.
    For every $i \in I$ and every $gH_i \in G/H_i$, consider the subgraph of $X'$ induced by the subset of vertices of the form $(gh,i)$ where $h$ varies in $H_i$, and collect these subgraphs into a family $\mathcal{Y}'$.
    
   The members of $\mathcal{Y}'$ are pairwise disjoint, and the pair $(X,\mathcal{Y})$ is quasi-isometric to $(X',\mathcal{Y}')$: a quasi-isometry is obtained by identifying $X$ with a layer of $X'$ with a fixed $I$-coordinate.
 By definition of $H_\infsub^k$ of a pair, we have $H_\infsub^k(X,\mathcal{Y};V) \cong H_\infsub^k(X',\mathcal{Y}';V)$.

    To conclude, we notice that the complexes $C_\infsub^\bullet(G,\mathcal{H};V)$ and $C_\infsub^\bullet(X',\mathcal{Y}';V)$ are isomorphic.
    In any degree $k \ge 0$, they consist of $V$-valued functions on $(G\times I)^{k+1}$, with the same vanishing assumption, that are bounded on $G$-orbits or, respectively, bounded on sets of tuples with fixed diameter.
    However, tuples in the same $G$-orbit always have a fixed diameter. On the other hand, a set of tuples with fixed diameter is contained in the union of a finite number of $G$-orbits, because $I$ is finite and $G$ is finitely generated, and we have used a finite generating set in the construction of the graph.
\end{proof}

\section{Extending 1-cocycles}\label{sec:extending}

The goal of this section is to prove Proposition \ref{prop:extend}.

We will extend the 1-cocycles by extending (unbounded, but Lipschitz) primitives provided by the following lemma.

\begin{lemma}
\label{lem:lip_primitive}
    Let $\phi\in C^1(X;V)$ be a 1-cocycle on some connected graph $X$. Then $\phi=\delta f$ for some function $f\colon X\to V$, with Lipschitz constant being the norm of $\phi|_{C_1^1(X,V)}$.
\end{lemma}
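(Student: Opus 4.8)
The plan is to produce the primitive $f$ by ``integrating'' $\phi$ from a fixed basepoint. First I would fix a vertex $x_0 \in \vertx{X}$; this is the only place connectedness of $X$ is used, as we only need $x_0$ to lie in the same component as every other vertex. Then define the $0$-cochain $f\colon \vertx{X} \to V$ by $f(x) = \phi(x_0, x)$ (one can, if desired, extend $f$ to all of $X$ by affine interpolation along edges, which does not change the Lipschitz constant, but the identity $\delta f = \phi$ really concerns $f$ as a function on vertices).

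Next I would check that $\delta f = \phi$. Unwinding the cocycle condition $\delta \phi = 0$ on a triple $(x,y,z)$ gives $\phi(y,z) - \phi(x,z) + \phi(x,y) = 0$, that is, $\phi(x,z) = \phi(x,y) + \phi(y,z)$ for all $x,y,z \in \vertx{X}$ (in particular $\phi(x,x) = 0$ and $\phi(y,x) = -\phi(x,y)$). Applying this to the triple $(x_0,x,y)$ yields $\phi(x_0,y) = \phi(x_0,x) + \phi(x,y)$, so $(\delta f)(x,y) = f(y) - f(x) = \phi(x_0,y) - \phi(x_0,x) = \phi(x,y)$, which is exactly $\delta f = \phi$. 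Note this also makes path-independence of the ``integral'' automatic, since $f$ is defined directly through the pair $(x_0,x)$ and the cocycle relation does precisely the work of making $f(y)-f(x)$ depend only on the endpoints.

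It remains to estimate the Lipschitz constant. I would first observe that $\lvert \phi \rvert_\infty^1 = \sup\{\norm{\phi(u,w)} : d(u,w) \le 1\}$, since the operator norm of a linear map with respect to the $\ell^1$-norm on $C_1^1(X)$ is attained on basis elements. Now given vertices $u,w$, pick a geodesic path $p$ from $u$ to $w$, passing through $z_0 = u, z_1, \dots, z_n = w$; then $\chain{p} = \sum_{i=1}^n (z_{i-1},z_i) \in C_1^1(X)$ with $\norm{\chain{p}}_1 = \length{p} = d(u,w)$, and telescoping via the cocycle identity gives $\phi(u,w) = \sum_{i=1}^n \phi(z_{i-1},z_i) = \phi(\chain{p})$. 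Hence
\[ \norm{f(w) - f(u)} = \norm{\phi(u,w)} = \norm{\phi(\chain{p})} \le \lvert \phi \rvert_\infty^1 \cdot d(u,w), \]
so $f$ is Lipschitz with constant $\lvert \phi \rvert_\infty^1$, as required.

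I do not expect a genuine obstacle here: the statement is essentially the elementary fact that a $1$-cocycle on a connected graph is a coboundary (the ``cone'' remark \Cref{rmk:ZisB} already gives some primitive), refined to keep track of norms. The only points needing a little care are the sign/orientation conventions for $\delta$ on $0$- and $1$-cochains, and the reduction of the operator norm $\lvert \phi \rvert_\infty^1$ to its values on adjacent pairs of vertices.
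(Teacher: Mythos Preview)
Your proof is correct and follows essentially the same approach as the paper: fix a basepoint $x_0$, set $f(x)=\phi(x_0,x)$, verify $\delta f=\phi$ via the cocycle identity on $(x_0,x,y)$, and bound the Lipschitz constant by $|\phi|_\infty^1$. The paper checks the Lipschitz bound only on adjacent vertices (which suffices for a graph metric), whereas you spell out the telescoping along a geodesic; this is a minor elaboration, not a different argument.
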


\begin{proof}
Fix a base vertex $x_0\in X$ and define $f(x):=\phi(x_0,x)$. Then for all vertices $x,y\in X$, we have $\delta f(x,y)=f(y)-f(x)=\phi(x_0,y)-\phi(x_0,x)=\phi(x,y)$, where the last equality is because $\phi(\partial (x_0,x,y))=0$.

Also, the fact that the function $f$ is Lipschitz with the required constant follows immediately from the fact that for all adjacent vertices $x,y$ we have $|f(y)-f(x)|=|\phi(x,y)|\leq |\phi|^1_\infty$.
\end{proof}

Even though we do not need this, we note the following consequence:

\begin{corollary}
    $H^1_{(\infty)}(X;V)$ is isomorphic to the space of Lipschitz functions on $X$, modulo bounded functions.
\end{corollary}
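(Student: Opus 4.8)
\emph{The plan.} This is essentially immediate from Lemma \ref{lem:lip_primitive}: the assignment $f \mapsto \cbd f$ will provide the isomorphism. We work with a connected graph $X$ (for a general graph one argues componentwise); ``Lipschitz function on $X$'' may be read as a Lipschitz function on $\vertx{X}$ with respect to the path metric, since only values at vertices enter into $\cbd f$.

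\emph{Step 1: the coboundary of a Lipschitz function is an $\ell^\infty$-cocycle.} Given a Lipschitz $f\colon X \to V$, the cochain $\cbd f$ is automatically a cocycle, and for $(x,y) \in \vertx{X}^2$ with $d(x,y) \le R$ we have $\lVert \cbd f(x,y) \rVert = \lVert f(y)-f(x)\rVert \le \mathrm{Lip}(f)\cdot d(x,y) \le \mathrm{Lip}(f)\cdot R$, so $\lvert \cbd f \rvert_\infty^R < +\infty$ for every $R$. Hence $\cbd f \in Z^1_\infsub(X;V)$, and $f \mapsto \cbd f$ defines a linear map $\Psi$ from the space of Lipschitz functions on $X$ to $Z^1_\infsub(X;V)$, and by composition with the projection a map $\bar\Psi$ to $H^1_\infsub(X;V)$.

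\emph{Step 2: surjectivity.} By Lemma \ref{lem:lip_primitive}, every cocycle $\phi \in Z^1_\infsub(X;V)$ equals $\cbd f$ for some $f$ with $\mathrm{Lip}(f) = \lvert\phi\rvert_\infty^1 < +\infty$; thus $\Psi$ (and hence $\bar\Psi$) is onto. \emph{Step 3: the kernel.} Since a single vertex has diameter $0$, we have $C_0^R(X) = C_0(X)$ for every $R\ge 0$, so $C^0_\infsub(X;V)$ is exactly the space of functions $\vertx{X}\to V$ with bounded image, and $B^1_\infsub(X;V) = \cbd C^0_\infsub(X;V) = \{\cbd g : g\colon\vertx{X}\to V \text{ bounded}\}$. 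Therefore $\bar\Psi(f)=0$ iff $\cbd f = \cbd g$ for some bounded $g$, i.e. iff $f-g$ is locally constant for some bounded $g$; as $X$ is connected this means $f$ is a bounded function plus a constant, equivalently $f$ is bounded. Conversely every bounded $f$ has $\cbd f \in B^1_\infsub(X;V)$. So $\ker\bar\Psi$ is precisely the subspace of bounded Lipschitz functions, and $\bar\Psi$ descends to the claimed isomorphism between Lipschitz functions modulo bounded functions and $H^1_\infsub(X;V)$.

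\emph{Main obstacle.} There is really none of substance: the content is entirely in Lemma \ref{lem:lip_primitive}. The only points needing a moment's attention are the connectedness hypothesis (used both to invoke Lemma \ref{lem:lip_primitive} and to upgrade ``locally constant'' to ``constant'') and the elementary identification of $C^0_\infsub(X;V)$ with pointwise-bounded functions on $\vertx{X}$.
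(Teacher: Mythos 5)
Your proposal is correct and is exactly the argument the paper has in mind: the paper states this corollary without proof as an immediate consequence of Lemma \ref{lem:lip_primitive}, and your three steps (coboundaries of Lipschitz functions are $\ell^\infty$-cocycles, surjectivity via the Lipschitz primitive from the lemma, and identifying $C^0_\infsub(X;V)$ with bounded functions to compute the kernel) are precisely the intended expansion of that remark. No gaps; the connectedness caveat you flag matches the hypothesis of the lemma.
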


We are now ready to prove the proposition.

\begin{proof}[Proof of Proposition \ref{prop:extend}]
We can assume that the subgraphs $\mathcal Y$ cover the whole vertex set of the graph by adding single vertices as subgraphs of $\mathcal Y$, with corresponding map $\pi_Y$ being the only possible one. All properties are easily seen to be preserved.

Also, by \cite[Section 4]{BBFS}, we can assume, up to increasing $B$, that a property stronger than (3) holds, namely

\par\medskip

(3') If $W,Y,Z$ are distinct and $d_W(Y,Z)\geq B$, then $\pi_Y(W)=\pi_Y(Z)$.

\par\medskip

Now, in view of Lemma \ref{lem:lip_primitive} we have Lipschitz functions $f_Y$ on each $Y\in\mathcal Y$ with uniformly bounded Lipschitz norm (with respect to the intrinsic metric of $Y$, whence the ambient metric of $X$), and we would like to construct a Lipschitz function $f$ on $X$, such that for all $Y\in\mathcal Y$, we have that $f|_Y-f_Y$ is constant. In fact, we have to define $f$ on each $Y$, since together they cover $X$, and since the various $Y$ are disjoint, we can define $f$ on each of them separately.

For $W,Y\in\mathcal Y$ denote
\[\prel{W}{Y}=\{Z: d_Z(W,Y)> 10B\}\cup \{W,Y\}\]
and $\rho(W,Y)=|\prel{W}{Y}|$; this cardinality is finite by (4). There is a total order $\preceq$ on $\prel{W}{Y}$ with maximal element $Y$ given in \cite[Lemma 2.2]{BBFS}, which we will use below. There are several equivalent characterisations of the order, including that $U\prec V$ if and only if $\pi_U(V)=\pi_U(Y)$.

Fix any $W_0\in \mathcal Y$ and define $f=f_{W_0}$ on $W_0$. Inductively, suppose that we have defined $f$ on any $Y$ such that $\rho(W,W_0)\leq n$ (note that $W=W_0$ is the only element of $\mathcal Y$ with $\rho(W,W_0)=1$). Let $Y\in\mathcal Y$ be such that $\rho(Y,W_0)=n+1$, and let $Y'=p(Y)$ be the penultimate element in the $\preceq$ order on $Rel(W_0,Y)$. By the following claim we have $\rho(p(Y),W_0)=n$, so that $f$ has been defined on $p(Y)$.

\par\medskip

{\bf Claim 1.} $\prel{W_0}{p(Y)}$ consists of all elements $\chi\in\prel{W_0}{Y}$ with $\chi\preceq p(Y)$.

\begin{proof}
    This follows from \cite[Proposition 2.3, Corollary 2.5]{BBFS}.
\end{proof}

Consider any $b(Y)\in\pi_Y(Y')$ and $s(Y)\in \pi_{Y'}(Y)$ (``$b$'' for basepoint and ``$s$'' for shadow). Define $f$ on $Y$ as
$$f_Y-f_Y(b(Y))+f(s(Y)).$$

We are left with checking that $f$ is Lipschitz. Let $y,z$ be adjacent vertices of $X$, with $y\in Y$ and $z\in Z$ for some $Y,Z\in \mathcal Y$, and let us show that there is a bound on $|f(y)-f(z)|$. We can assume that $Y$ and $Z$ are distinct, for otherwise we can use that $f_Y=f_Z$ is Lipschitz.

We need two claims on the structure of $\prel{\cdot}{\cdot}$-sets and their order. Both claims also hold switching the roles of $Y$ and $Z$. Note that the only element $W$ of $\mathcal Y$ for which $p(W)$ is not defined is $W_0$.

\par\medskip

{\bf Claim 2.} If $Y,p(Y)\neq W_0$, then $p(p(Y))$ is the element of $\prel{W_0}{Y}$ preceding $p(Y)$ with respect to $\preceq$.

\begin{proof}
     This follows immediately from Claim 1.
\end{proof}

\par\medskip

{\bf Claim 3.} $\prel{W_0}{Y}-\prel{W_0}{Z}$ is either empty, $\{Y\}$ or $\{Y,p(Y)\}$.

\begin{proof}
If $Y\in \prel{W_0}{Z}$, then $\prel{W_0}{Y}\subseteq \prel{W_0}{Z}$ by \cite[Corollary 2.5]{BBFS}. If $Y\notin \prel{W_0}{Z}$ and $p(Y)\in \prel{W_0}{Z}$, similarly any $W\in \prel{W_0}{Y}$ with $W\preceq p(Y)$ is in $\prel{W_0}{Z}$.

Hence, suppose $Y,p(Y)\notin \prel{W_0}{Z}$, and let us show that all elements of $\prel{W_0}{Y}$ which are $\prec p(Y)$, are in fact exactly $\prel{W_0}{Z}$.

We have $d_{p(Y)}(W_0,Z)\geq 5B$, since $d_{p(Y)}(W_0,Y)\geq 10B$ (by definition of $\prel{\cdot}{\cdot}$) and $d_{p(Y)}(Y,Z)\leq 4B$ by the coarse Lipschitz property of $\pi_{p(Y)}$ and bounded projections. So $p(Y)$ lies in the analogue of $\prel{W_0}{Z}$ defined replacing $10B$ with $5B$ and once again we can use \cite[Proposition 2.3, Corollary 2.5]{BBFS} (which apply with the lower constant $5B$ as well).
\end{proof}

Consider now $Y_0=Y$, $Y_1=p(Y)$, $Y_2=p(p(Y))$ (where these are defined), and similarly for $Z$. 
Up to switching the roles of $Y$ and $Z$, we can assume that the minimal $i$ such that $Y_i\in \prel{W_0}{Y}\cap \prel{W_0}{Z}$ is no larger than the corresponding index for $Z$.

Then, because of the claims, one of the following must hold:

\begin{itemize}
\item $Y\in \prel{W_0}{Z}$ and $p(Z)=Y$,
\item $Y\in \prel{W_0}{Z}$ and $p(p(Z))=Y$,
\item $Y\notin \prel{W_0}{Z}$, $p(Y)\in \prel{W_0}{Z}$, and $p(Z)=p(Y)$,
\item $Y, p(Y)\notin \prel{W_0}{Z}$ and $p(p(Z))=p(Y)$,
\item $Y, p(Y)\notin \prel{W_0}{Z}$ and $p(p(Z))=p(p(Y))$.
\end{itemize}

All cases can be dealt with using similar arguments, we spell out the proof of the last case, as that is the most complicated one, see Figure \ref{fig:Lipschitz_proof} for a schematic of the various relevant points and elements of $\mathcal Y$.

\begin{figure}[ht]
    \centering
    \includegraphics[width=\textwidth]{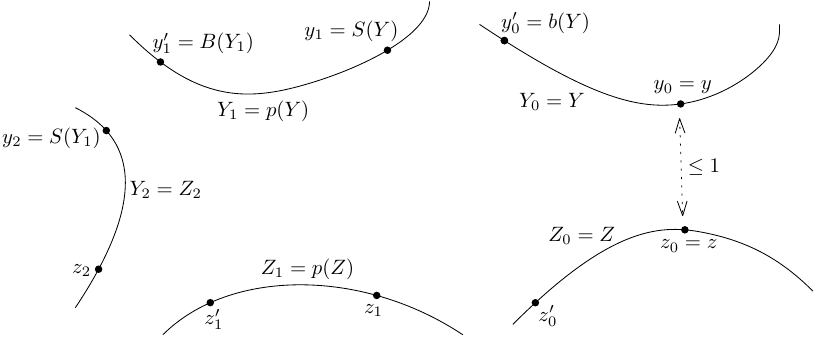}
    \caption{Proof that $f$ is Lipschitz. All pairs of points that lie in the same element of $\mathcal Y$ turn out to be close to each other, due to projections being Lipschitz.}
    \label{fig:Lipschitz_proof}
\end{figure}

Note that $Z_0,Z_1\notin \prel{W_0}{Y}$ by assumption.

Define $y_0=y$, $y'_i=b(Y_i)$ and, for $i=1,2$, $y_i=s(Y_{i-1})$, so that $y_i,y'_i\in Y_i$. Define $z_i,z'_i$ similarly. Note that by construction of $f$ we have $f(b(W))=f(s(W))$ for any $W$, and in particular we have $f(y'_i)=f(y_{i+1})$. Note that $d(y_0,y'_0)\leq 100B$, for otherwise, since $\pi_Y(Z)$ is close to $y_0$ (by the coarse Lipschitz and bounded projections properties), we would have $d_Y(Y_1,Z)\geq 10B$, and so $Y\in \prel{W_0}{Z}$. Similarly, we must have $d(y_1,y'_1)\leq 100B$, for otherwise, since $\pi_{Y_1}(Z)$ is close to $\pi_{Y_1}(Y)$ (again by the same properties), we would have $Y_1\in \prel{Y_0}{Y}$. Similar observations also apply to the $z_i$ and $z'_i$. Finally, $d(y_2,z_2)\leq 100 B$, since $y_2$ coarsely coincides with $\pi_{Y_2}(y)$ (by definition of the order), $y_2$ coarsely coincides with $\pi_{Y_2}(z)$, and $\pi_{Y_2}$ is coarsely Lipschitz.

Putting all these together we get
\begin{align*}
    |f(y)-f(z)|\leq& \left|f(y)-f(y'_0)\right|+|f(y_1)-f(y'_1)|+|f(y_2)-f(z_2)|\\
    &+|f(z_1)-f(z'_1)|+|f(z)-f(z'_0)|,
\end{align*}
and all terms are uniformly bounded, concluding the argument.

\end{proof}

\section{Hyperbolically embedded subgroups}\label{sec:hypembed}

In this section, we characterise hyperbolically embedded subgroups via $\ell^\infty$\nobreakdash-\hspace{0pt}cohomology. We will however need a hypothesis on the ambient group, which is a form of finiteness of homological isoperimetric function for an infinite generating set. It should be seen as a generalisation of finite presentation in two ways, namely it is a homological analogue, and it allows for infinite generating sets. 

Let $G$ be a group generated by a (possibly infinite) generating set $S$.
We denote by $\cay{G,S}$ the Cayley graph of $G$ with respect to $S$. The condition we will need is the following:

\begin{definition}
\label{defn:S-bounded}
    
    $G$ has $S-$bounded $H^2$ if $Z_1(\cay{G,S})$ (meaning cellular cycles with integer coefficients) is generated, as a $\mathbb ZG$-module, by a collection $\mathcal C$ of cycles of bounded $\ell^1$-norm, and moreover there exists a function $\Delta$ such that if $c\in Z_1(\cay{G,S})$ has $\ell^1$-norm at most $n$, then we can write $c=\sum_{i=1}^k g_ic_i$ for some $c_i\in \mathcal C,g_i\in G$ and $k\leq \Delta(n)$.
\end{definition}

\begin{remark}
\label{rem:fin_pres}
If $G$ is finitely presented and $S$ is any finite generating set, then $G$ has $S$-bounded $H^2$. Indeed, we can take the collection of chains from the definition to correspond to relators, and $\Delta(n)$ is the Dehn function.
\end{remark}

Henceforth, we work in the situation described in \Cref{assump}.
\begin{assumption}\label{assump}
    Let $G$ be a group, $S$ be a generating set of $G$ and $H_1,\dots,H_n$ be a finite family of finitely generated subgroups of $G$.
    The generating set $S$ is not assumed to be finite.
    We assume that, for every $i$, the set $S\cap H_i$ generates $H_i$.
\end{assumption}

When we work under \Cref{assump}, we denote by $X$ the Cayley graph of $G$ with respect to $S$, and let $\mathcal{Y}$ be the collection of full subgraphs corresponding to the cosets $gH_i$.
    As in \Cref{sec:relative}, $\mathcal H_\mathcal{Y}$ denotes the collection of horoballs $\mathcal{H}_Y \subseteq \cusp{X,\mathcal{Y}}$.
    We denote by $d_S$ the usual metric on $X$.

\begin{theorem}\label{thm:proper_fif}
  We work under \Cref{assump}.
  Suppose that:
    \begin{itemize}
        \item For every $i \in \{1,\dots,n\}$, the restriction $d_S|_{H_i}$ is proper;
        \item $\cusp{X,\mathcal{Y}}$ has finite homological isoperimetric function.
    \end{itemize}
    Then, the following conditions are equivalent:
    \begin{enumerate}
        \item \label{it:hyp_emb} The family of subgroups $\{H_i\}_{i=1}^n$ is hyperbolically embedded in $(G,S)$;
        \item \label{it:vanishing_V} $\infcohom{X,\mathcal{Y}}{V}=0$ for all 1-injective Banach spaces $V$;
        \item \label{it:vanishing_N} $\infcohom{X,\mathcal{Y}}{\ell^{\infty}(\mathbb N, \R)}=0$.
    \end{enumerate}    
\end{theorem}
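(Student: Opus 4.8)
The plan is to deduce \Cref{thm:proper_fif} from the machinery already developed, by identifying hyperbolic embeddedness with hyperbolicity of the cusped space and then invoking the relative cohomological characterisation. Recall that, by a theorem of Dahmani--Guirardel--Osin (see \cite[Definition 4.1, Theorem 4.24]{DGO17}), under \Cref{assump} the family $\{H_i\}$ is hyperbolically embedded in $(G,S)$ if and only if the associated cusped space $\cusp{X,\mathcal{Y}}$ is hyperbolic; here the hypothesis that each $d_S|_{H_i}$ is proper is exactly what is needed to guarantee that the coned-off / cusped perspectives agree and that the $H_i$ are undistorted enough for the equivalence to hold. So the first step is to state this equivalence carefully and record that, under our hypotheses, condition \eqref{it:hyp_emb} is equivalent to ``$\cusp{X,\mathcal{Y}}$ is hyperbolic''.

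Next I would set up the cohomological side. Since $\mathcal{Y}$ consists of the full subgraphs on the cosets $gH_i$ and the $H_i$ are finitely generated with $S \cap H_i$ generating $H_i$, each $Y \in \mathcal{Y}$ is connected; thus \Cref{cor:cusp_hyp} applies once we know $\cusp{X,\mathcal{Y}}$ has a finite isoperimetric function, which is the second bullet hypothesis (finite homological isoperimetric function, which together with \Cref{lemma:fhif} and \Cref{prop:linear_isop_hyp}'s surrounding discussion plays the role of the isoperimetric hypothesis; note \Cref{cor:cusp_hyp} as stated wants a finite isoperimetric function, but the proof really only uses the homological version via \Cref{thm:hyp_main}). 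Then \Cref{cor:cusp_hyp} gives: $\cusp{X,\mathcal{Y}}$ is hyperbolic $\iff$ $\infcohom{\cusp{X,\mathcal{Y}},\mathcal{H}_\mathcal{Y}}{V}=0$ for all $1$-injective $V$.

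It then remains to bridge between the cohomology of the pair $(\cusp{X,\mathcal{Y}},\mathcal{H}_\mathcal{Y})$ and the cohomology of the pair $(X,\mathcal{Y})$ appearing in the statement. This is where the excision theorem, \Cref{prop:excision}, enters: the inclusion $X \hookrightarrow \cusp{X,\mathcal{Y}}$, together with the bijection $\mathcal{Y} \to \mathcal{H}_\mathcal{Y}$ sending $Y$ to its horoball $\mathcal{H}_Y$ (or more precisely, a relative coarsely uniform map collapsing each horoball to its base and sending $\mathcal{H}_Y$ to $Y$), should be checked to satisfy the four bulleted hypotheses of \Cref{prop:excision} — the key points being that a vertex of $\cusp{X,\mathcal{Y}}$ lies in $\mathcal{H}_Y$ iff its image lies in $Y$, that distances between points not in a common horoball are distorted in a controlled, proper way (this uses the combinatorial-horoball distance estimates of \cite{GM2008}, and is where properness of $d_S|_{H_i}$ is again relevant), and that every point of $X'$ is either in the image or in some $\mathcal{H}_Y$. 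Granting this, \Cref{prop:excision} yields $\infcohom{\cusp{X,\mathcal{Y}},\mathcal{H}_\mathcal{Y}}{V} \cong \infcohom{X,\mathcal{Y}}{V}$ for every normed $V$, and likewise for $V = \linfn$. Combining with the previous step proves \eqref{it:hyp_emb} $\iff$ \eqref{it:vanishing_V}. Finally, \eqref{it:vanishing_V} $\implies$ \eqref{it:vanishing_N} is trivial since $\linfn$ is $1$-injective, and \eqref{it:vanishing_N} $\implies$ \eqref{it:hyp_emb} follows by running the same chain backwards using the $\linfn$-version of \Cref{cor:cusp_hyp} (i.e.\ \Cref{prop:vanishing_hyp} applied to the cusped space after excision), closing the loop.

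The main obstacle I expect is the verification that the collapse map $(\cusp{X,\mathcal{Y}},\mathcal{H}_\mathcal{Y}) \to (X,\mathcal{Y})$ (or its quasi-inverse, the inclusion) meets the precise hypotheses of \Cref{prop:excision} — especially producing the proper lower gauge $\rho_-$ and the ``coarse surjectivity near each $Y'$'' condition, which both rely on the explicit geometry of combinatorial horoballs and on $d_S|_{H_i}$ being proper (without properness, $\rho_-$ can fail to tend to infinity). A secondary point to handle with care is reconciling ``finite homological isoperimetric function'' in the hypothesis with the ``finite isoperimetric function'' literally demanded by \Cref{cor:cusp_hyp}; this is resolved by observing that the proof of \Cref{cor:cusp_hyp} only invokes \Cref{thm:hyp_main} (via Propositions \ref{prop:cusp_rel_cohomo_0}, \ref{prop:rel_cohom_surj}, \ref{prop:vanishing_hyp}), all of which are stated in terms of the homological version, so no genuine gap arises. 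Everything else is bookkeeping: connectedness of the $Y$, the triviality of the two implications involving $\linfn$, and invoking the DGO equivalence as a black box.
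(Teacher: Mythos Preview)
Your approach is essentially the paper's: reduce hyperbolic embeddedness to hyperbolicity of the cusped space, use excision to identify $\infcohom{X,\mathcal{Y}}{V}$ with $\infcohom{\cusp{X,\mathcal{Y}},\mathcal{H}_\mathcal{Y}}{V}$, and then invoke the cohomological characterisation of hyperbolicity (via \Cref{cor:cusp_hyp}, or equivalently \Cref{prop:cusp_rel_cohomo_0}, \Cref{prop:rel_cohom_surj}, \Cref{prop:vanishing_hyp}). The paper cites \cite{metric_rel_hyp} rather than \cite{DGO17} for the first reduction, but this is immaterial.

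There is one technical point you glossed over that the paper handles explicitly. \Cref{prop:excision} requires the subgraphs on \emph{both} sides to be pairwise disjoint, and the cosets $gH_i$ in $X$ need not be (distinct cosets of different $H_i$ can intersect); indeed, the very definition of $H^2_\infsub(X,\mathcal{Y};V)$ requires passing to a quasi-isometric model with disjoint subgraphs. The paper deals with this by introducing the ``truncation'' $(\hat X,\hat{\mathcal Y})$, obtained by attaching only the first layer of each horoball and taking those layers as the (now disjoint) subgraphs; excision is then applied to the inclusion $(\hat X,\hat{\mathcal Y})\hookrightarrow(\cusp{X,\mathcal Y},\mathcal H_\mathcal Y)$. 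You should insert this intermediate model; once you do, the verification of the excision hypotheses (in particular producing $\rho_-$, which is exactly where properness of $d_S|_{H_i}$ is used) goes through as you anticipate.
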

\begin{proof}
    By \cite[Theorem 1.1, Theorem 1.2]{metric_rel_hyp}, $\{H_i\}$ is hyperbolically embedded in $(G,S)$ if and only if $\cusp{X,\mathcal Y}$ is hyperbolic and $d_S|_{H_i}$ is proper for every $i$. (When the $H_i$ are hyperbolic, this can also be deduced from \cite[Theorem 3.9]{AMS}, saying that the $H_i$ are quasiconvex and geometrically separated in $X$, together with the arguments showing that a hyperbolic group is hyperbolic relative to any almost malnormal finite collection of quasiconvex subgroups \cite{Bow:rel_hyp_PUB}. Note that this can also be proven with the more modern approach of ``guessing geodesics'' \cite[Theorem 3.11]{MS:disk}.)

    In order to compute $H^2_\infsub(X,\mathcal{Y};V)$, we need a pair quasi-isometric to $(X,\mathcal{Y})$ in which the subgraphs are disjoint.
    We consider the ``truncation'' of the cusped space $\cusp{X,\mathcal{Y}}$ in which we attach to $X$ only the first layer of the horoballs; these first layers are identified naturally with the elements of $\mathcal{Y}$.
    We denote the resulting graph and the corresponding family of subgraphs by $(\hat X, \hat{\mathcal{Y}})$, which is a pair quasi-isometric to $(X,\mathcal Y)$.

    $\eqref{it:hyp_emb}\implies\eqref{it:vanishing_V}$.
    Suppose that $\{H_i\}$ is hyperbolically embedded in $G$.
    By \Cref{prop:cusp_rel_cohomo_0}, we have $\infcohom{\cusp{X,\mathcal Y},\mathcal H_\mathcal Y}{V}=0$ for all 1-injective Banach spaces $V$.
    Using that for every $i$ the restricted metric $d_S|_{H_i}$ is proper, we see that the natural inclusion $(\hat X,\hat{\mathcal Y})\to (\cusp{X,\mathcal Y},\mathcal H_{\mathcal Y})$ satisfies the assumptions of \Cref{prop:excision}, and therefore we conclude that $\infcohom{X,\mathcal{Y}}{V}=\infcohom{\hat X,\hat{\mathcal{Y}}}{V} = 0$, as required.

    $\eqref{it:vanishing_N}\implies\eqref{it:hyp_emb}$.
    Suppose now that $\infcohom{X,\mathcal{Y}}{\ell^{\infty}(\mathbb N, \R)}=0$.
    Again using \Cref{prop:excision}, we also get $\infcohom{\cusp{X,\mathcal Y},\mathcal H_\mathcal Y}{V}=0$ for all 1-injective Banach spaces $V$.
    Then, by \Cref{prop:rel_cohom_surj}, vanishing also holds for $\infcohom{\cusp{X,\mathcal Y}}{V}$ since horoballs are uniformly hyperbolic.
    We conclude by applying Proposition \ref{prop:vanishing_hyp}.
\end{proof}

\begin{thm}\label{thm:Sbounded}\label{thm:hyp_emb}
  We work under \Cref{assump}.
  Suppose that:
    \begin{itemize}
        \item For every $i \in \{1,\dots,n\}$, the restriction $d_S|_{H_i}$ is proper;
        \item $G$ has $S$-bounded $H^2$.
    \end{itemize}
    Then, conditions \eqref{it:hyp_emb}, \eqref{it:vanishing_V} and \eqref{it:vanishing_N} of \Cref{thm:proper_fif} are equivalent.
\end{thm}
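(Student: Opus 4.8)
\emph{Strategy.} The plan is to reduce the statement to \Cref{thm:proper_fif}. That theorem has exactly two hypotheses: properness of each $d_S|_{H_i}$ — which is also assumed here — and finiteness of a homological isoperimetric function for $\cusp{X,\mathcal Y}$. So it suffices to show that, whenever $G$ has $S$-bounded $H^2$, the cusped space $\cusp{X,\mathcal Y}$ has a finite homological isoperimetric function; the equivalence of \eqref{it:hyp_emb}, \eqref{it:vanishing_V} and \eqref{it:vanishing_N} then follows immediately. I would establish this in two steps.

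\emph{Step 1: $X=\cay{G,S}$ has a finite homological isoperimetric function.} Let $\mathcal C$ and $\Delta$ be as in \Cref{defn:S-bounded}, let $N_0$ bound the $\ell^1$-norms of the elements of $\mathcal C$, and assume $\Delta$ non-decreasing. Given a closed path $p$ in $X$, cancelling oppositely oriented edges turns $\chain p$ into an integer cellular $1$-cycle of $\ell^1$-norm at most $\length p$, hence writable as $\sum_{i=1}^k g_ic_i$ with $c_i\in\mathcal C$, $g_i\in G$ and $k\le\Delta(\length p)$. Each $g_ic_i$ is an integer cellular $1$-cycle of $\ell^1$-norm at most $N_0$; by \Cref{lemma:sum_path} (with $T=\emptyset$) it is an integer combination of closed paths of total length $O(N_0)$ together with terms $(x,x)=\partial(x,x,x)$, and coning each short loop over one of its vertices gives $g_ic_i\in B_1^{3N_0}(X)$ with $\norm{g_ic_i}_F^{3N_0}$ bounded solely in terms of $N_0$. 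Summing over $i$ yields $\chain p\in B_1^{3N_0}(X)$ with $\norm{\chain p}_F^{3N_0}=O\big(N_0\,\Delta(\length p)\big)$, i.e. a finite homological isoperimetric function with $R_0=3N_0$.

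\emph{Step 2: from $X$ to $\cusp{X,\mathcal Y}$.} The external input I would use is that combinatorial horoballs are $\delta$-hyperbolic for a universal $\delta$ \cite{GM2008}; with \Cref{lemma:hyp_filling} this gives uniform $R_1,C$ such that every closed path $q$ contained in a horoball $\mathcal H_Y$ has $\norm{\chain q}_F^{R_1}\le CR_1\length q$ with a filling $2$-chain supported in $\mathcal H_Y$, which is also a valid filling in $\cusp{X,\mathcal Y}$ since $d_{\cusp{X,\mathcal Y}}\le d_{\mathcal H_Y}$ on $\mathcal H_Y$. Take a closed path $p$ of length $\ell$ in $\cusp{X,\mathcal Y}$, which we may assume starts and ends in $X$ (otherwise fill it inside one horoball). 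Cutting $p$ along the connecting edges expresses it as a concatenation of subpaths in $X$ and of excursions $b_j$ of the form $x_j\to(x_j,0)\to\cdots\to(x_j',0)\to x_j'$, where the middle part $c_j$ of length $d_j$ lies in some horoball $\mathcal H_{Y_j}$ and $x_j,x_j'\in Y_j\subseteq X$. Since $(x_j,0)$ and $(x_j',0)$ are joined by a path of length $d_j$ in $\mathcal H_{Y_j}$, one has $d_{Y_j}(x_j,x_j')\le d_j 2^{d_j}$, so there is a path $\tau_j$ in $Y_j$ from $x_j$ to $x_j'$ of length $\le d_j 2^{d_j}$; let $\tau_j^{\mathcal H}$ be its copy in the depth-$0$ layer of $\mathcal H_{Y_j}$, which is canonically isomorphic to $Y_j$. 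Filling the loop formed by $c_j$ and the reverse of $\tau_j^{\mathcal H}$ inside $\mathcal H_{Y_j}$, and adding a ``strip'' of $O(d_j 2^{d_j})$ triangles of diameter $\le 2$ relating $\tau_j$, $\tau_j^{\mathcal H}$ and the two connecting edges, exhibits $\chain{b_j}-\chain{\tau_j}$ as an element of $B_1^{\max(R_1,2)}(\cusp{X,\mathcal Y})$ of filling norm $O(d_j 2^{d_j})$. Replacing each excursion by $\tau_j$ therefore modifies $\chain p$, up to a $2$-chain of bounded diameter and controlled norm, into $\chain{p'}$ for a closed path $p'$ lying \emph{entirely in $X$} of length at most $L(\ell):=\ell+\ell 2^{\ell}$. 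Filling $p'$ via Step 1 completes the argument, giving a finite homological isoperimetric function for $\cusp{X,\mathcal Y}$ with $R_0=\max(3N_0,R_1,2)$ and $\theta(\ell)=O\big(N_0\,\Delta(L(\ell))+\ell 2^{\ell}\big)$.

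\emph{Main obstacle.} The delicate part is the bookkeeping in Step 2: cutting $p$ cleanly at connecting edges, tracking the canonical identification between the depth-$0$ layer of each $\mathcal H_{Y_j}$ and the subgraph $Y_j\subseteq X$, handling path reversals within the $C_1^1$/combinatorial $2$-chain formalism, and arranging the auxiliary fillings (horoball excursions, strips, and the final filling in $X$) so that they all use a single uniform $R_0$. The possibly exponential blow-up in length when pushing an excursion down to its base is harmless, since \Cref{def:fhif} asks only for \emph{some} function $\theta$, not a linear or polynomial one.
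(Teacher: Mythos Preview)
Your proof is correct and follows essentially the same strategy as the paper: reduce to \Cref{thm:proper_fif} by establishing a finite homological isoperimetric function for $\cusp{X,\mathcal Y}$, treating paths in $X$ via $S$-bounded $H^2$, paths in horoballs via their uniform hyperbolicity, and mixed paths by cutting along horoball excursions and closing them up with paths in the cosets. The one noteworthy difference is how you bound the length of the closing paths: the paper invokes properness of $d_S|_{H_i}$ to control the coset geodesics, whereas you read off the bound $d_{Y_j}(x_j,x_j')\le d_j\,2^{d_j}$ directly from the horoball structure---this is a cleaner justification and shows that properness is not actually needed for the isoperimetric step (it is still required as a hypothesis of \Cref{thm:proper_fif}). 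Your explicit ``strip'' of diameter-$2$ triangles bridging $\tau_j\subseteq X$ and its copy $\tau_j^{\mathcal H}$ at depth $0$ makes precise an identification the paper leaves implicit.
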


\begin{proof}
  We show that $\cusp{X,\mathcal{Y}}$ has finite homological isoperimetric function, so that we can conclude by applying \Cref{thm:proper_fif}.

      Consider a closed path $p$ in $\cusp{X,\mathcal Y}$. If the path is entirely contained in a horoball, then it can be filled in a controlled way. That is, for all sufficiently large $R$, there exists a function $\theta$ such that $\mbox{Area}_R(p) \le \theta(\length{p})$, where $R$ and $\theta$ do not depend on $p$.
      Suppose now that $p$ is contained in $X$. For $\mathcal C$ the set as in the definition of $S-$bounded $H^2$, there exists $R_0\geq 0$ such that for all $R\geq R_0$ we have $\norm{c}_F^R \le \norm{c}_1$ for all $c\in\mathcal C$; this is because we can fill $c$ by ``coning over'' one of its vertices.
      Set $M=\max_{c\in\mathcal C} \{\lvert c\rvert_1\}$. Back to our closed path $p$, we can write $\chain{p}=\sum_{i=1}^k g_ic_i$ for some $c_i\in \mathcal C$ and $k\leq \Delta(\lvert c\rvert_1)$, and hence we have $\norm{\chain{p}}_F^R \le \Delta(\length{p}) M$, so that also in this case $\chain{p}$ can be filled in a controlled way.

    Finally, suppose that $p$ is neither contained in a horoball nor in $X$. In this case we can split $p$ (by ``cutting'' it where it passes from $X$ to a horoball or vice versa) into subpaths, each contained either in $X$ or in an ``extended horoball'' (the subgraph spanned by the union of a horoball with the corresponding coset on which the horoball is attached).
    Considering geodesics in the relevant cosets of the $H_i$, we can then write $\chain{p}=\sum_i^k \chain{p_i}$, where each $p_i$ is a closed path which is either contained in an extended horoball or in $X$; we do so by considering the subpaths contained in extended horoballs and closing them up with a geodesic in the corresponding coset (so that the reverses of these geodesics also close up the broken path in $X$). Since $d_S|_{H_i}$ is proper, the geodesics in the cosets have length controlled in terms of the distance (computed in $X$) between their endpoints.
    That is, there is a function $h$, independent of $p$, such that each of these geodesics has length at most $h(D)$, if $D$ is the distance in $X$ between its endpoints.
    We can assume that $h$ is nondecreasing, so that, in particular, all these geodesics have length at most $h(\length{p})$.
    Also, in order to close up the subpaths of $p$, we are adding at most $\length{p}$ of those geodesics;
    therefore, we have $\sum \length{p_i}\leq \length{p}\cdot h(\length{p})$.
    Since we can fill each $\chain{p_i}$ in a controlled way, we can also fill $\chain{p}$, as required.
\end{proof}

\subsection{Relative hyperbolicity}

We now make it explicit why Theorem \ref{thm:hyp_emb} generalises the $\ell^\infty$-characterisation of finitely presented relatively hyperbolic groups from \cite{Mil2021}.

\begin{corollary}[cf.\ \cite{Mil2021}]
\label{cor:Francesco}
    Let $G$ be a finitely presented group. Then $G$ is hyperbolic relative to $\mathcal H$ if and only if $\infcohom{G,\{H_i\}}{V}=0$
    for all injective Banach spaces $V$.
\end{corollary}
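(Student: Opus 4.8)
The plan is to deduce \Cref{cor:Francesco} directly from \Cref{thm:hyp_emb} by checking that the hypotheses of the latter are met in the finitely presented relatively hyperbolic setting, and that the resulting cohomological statement matches the one asserted. First I would fix a finite presentation of $G$, take $S$ to be the finite generating set coming from it, and note that by \Cref{rem:fin_pres} the group $G$ automatically has $S$-bounded $H^2$ (with $\Delta$ the Dehn function). Next, since $G$ is hyperbolic relative to $\mathcal{H} = \{H_1,\dots,H_n\}$, each peripheral subgroup $H_i$ is finitely generated (this is standard for finitely presented relatively hyperbolic groups, and in any case is part of the usual definition), and we may enlarge $S$ by finitely many elements so that $S \cap H_i$ generates $H_i$ for every $i$; this enlargement keeps $S$ finite, so $S$-bounded $H^2$ is preserved, and it puts us squarely in the setting of \Cref{assump}.

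The second hypothesis of \Cref{thm:hyp_emb} to verify is that $d_S|_{H_i}$ is proper. Since $S$ is finite, $\cay{G,S}$ is locally finite, so balls are finite; intersecting with $H_i$ shows $d_S|_{H_i}$ is proper automatically. Thus both bullet-point hypotheses of \Cref{thm:hyp_emb} hold, and so conditions \eqref{it:hyp_emb}, \eqref{it:vanishing_V}, \eqref{it:vanishing_N} of \Cref{thm:proper_fif} are all equivalent. The remaining point is to translate between the objects appearing in \Cref{thm:hyp_emb} and those in the statement of \Cref{cor:Francesco}. On the cohomology side, \Cref{prop:group_case} gives $H^k_\infsub(G,\mathcal H;V) \cong H^k_\infsub(X,\mathcal{Y};V)$, where $X = \cay{G,S}$ and $\mathcal{Y}$ is the family of cosets of the $H_i$ — exactly the relative cohomology of the pair appearing in condition \eqref{it:vanishing_V}. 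On the geometric side, by \cite[Theorem 1.1, Theorem 1.2]{metric_rel_hyp} (invoked in the proof of \Cref{thm:proper_fif}), together with the standard fact that $\{H_i\}$ being hyperbolically embedded in $(G,S)$ for a \emph{finite} $S$ is equivalent to $G$ being hyperbolic relative to $\{H_i\}$ (see \cite[Theorem 4.28, Proposition 4.33]{DGO17}, or \cite{Osin18}), condition \eqref{it:hyp_emb} is equivalent to $G$ being hyperbolic relative to $\mathcal H$. Feeding these two translations into the equivalence $\eqref{it:hyp_emb}\Leftrightarrow\eqref{it:vanishing_V}$ yields the statement of the corollary.

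One small bookkeeping issue is the discrepancy between ``$1$-injective'' Banach spaces (used throughout \Cref{sec:relative} and in \Cref{thm:proper_fif}) and ``injective'' Banach spaces (used in the statement of \Cref{cor:Francesco}, matching \cite{Mil2021}). Here one uses that every injective Banach space is isomorphic, as a Banach space, to a $1$-injective one (indeed a $C(K)$ space), and that vanishing of $\ell^\infty$-cohomology with coefficients in $V$ depends only on the isomorphism class of $V$ as a topological vector space, since the cochain complexes and the boundedness conditions defining them are unchanged under passing to an equivalent norm; alternatively, one observes that $\infcohom{X,\mathcal Y}{\ell^\infty(\mathbb N,\R)}=0$ already implies $\infcohom{X,\mathcal Y}{V}=0$ for all $1$-injective $V$ via condition \eqref{it:vanishing_N}, so the distinction is immaterial. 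I expect this normalisation point to be the only genuine subtlety; the rest is an essentially formal specialisation of \Cref{thm:hyp_emb} together with the dictionary provided by \Cref{prop:group_case} and the known equivalence between relative hyperbolicity and hyperbolic embeddedness of peripherals with respect to a finite generating set.
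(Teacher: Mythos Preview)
Your proof is correct and follows essentially the same route as the paper: fix a finite generating set, invoke \Cref{rem:fin_pres} for $S$-bounded $H^2$, use finiteness of $S$ for properness of $d_S|_{H_i}$, translate the cohomology via \Cref{prop:group_case}, and identify hyperbolic embeddedness with relative hyperbolicity via \cite[Proposition 4.28]{DGO17}. You are in fact more careful than the paper in two places: you explicitly arrange that $S\cap H_i$ generates $H_i$ (as required by \Cref{assump}), and you address the injective versus $1$-injective discrepancy, neither of which the paper's proof mentions. One small wording point: your sentence ``since $G$ is hyperbolic relative to $\mathcal{H}$, each $H_i$ is finitely generated'' reads as if you are assuming one direction of the equivalence to set up the other; your parenthetical ``in any case is part of the usual definition'' is the correct justification and should be the primary one, since finite generation of the $H_i$ is a standing hypothesis (via \Cref{assump}) rather than something to be deduced.
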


\begin{proof}
    Let $S$ be a finite generating set for $G$. By Proposition \ref{prop:group_case} we can relate the cohomology appearing in the statement of \Cref{thm:hyp_emb} to the group-theoretic $\ell^\infty$-cohomology, that is, we have $\infcohom{\cay{G,S},G/\{H_i\}}{V}=\infcohom{G,\{H_i\}}{V}$.
    Moreover, by \cite[Proposition 4.28]{DGO17}, $G$ is hyperbolic relative to $\mathcal H$ if and only if $\mathcal H$ is hyperbolically embedded in $(G,S)$. Since $d_S|_{H_i}$ is proper for all $i$ because $S$ is finite, and $G$ has $S$-bounded $H^2$ by Remark \ref{rem:fin_pres}, we can apply Theorem \ref{thm:hyp_emb}.
\end{proof}

\begin{remark}
    We believe that there should be an improvement of Theorem \ref{thm:hyp_emb} that allows one to fully recover the results of \cite{Mil2021}, which hold more generally for relatively finitely presented groups rather than finitely presented groups. The price to pay is increasing the level of technicality of Definition \ref{defn:S-bounded}, and we do not pursue this here.
\end{remark}

\bibliographystyle{fram_alpha}
\bibliography{bibliography.bib}

\providecommand{\bysame}{\leavevmode\hbox to3em{\hrulefill}\thinspace}
\providecommand{\MR}{\relax\ifhmode\unskip\space\fi MR }
\providecommand{\MRhref}[2]{%
  \href{http://www.ams.org/mathscinet-getitem?mr=#1}{#2}
}
\providecommand{\href}[2]{#2}
\begin{thebibliography}{BBFS19}

\bibitem[AG99]{AllcockGer}
D.~J. Allcock and S.~M. Gersten, \emph{A homological characterization of
  hyperbolic groups}, Invent. Math. \textbf{135} (1999), 723--742.
  \href{https://doi.org/10.1007/s002220050299}{\path{doi:10.1007/s002220050299}}

\bibitem[AMS16]{AMS}
Y.~Antolin, A.~Minasyan and A.~Sisto, \emph{Commensurating endomorphisms of
  acylindrically hyperbolic groups and applications}, Groups Geom. Dyn
  \textbf{10} (2016), 1149--1210.
  \href{https://doi.org/10.4171/ggd/379}{\path{doi:10.4171/ggd/379}}

\bibitem[BBF15]{BBF}
M.~Bestvina, K.~Bromberg and K.~Fujiwara, \emph{Constructing group actions on
  quasi-trees and applications to mapping class groups}, Publ. Math. Inst.
  Hautes \'{E}tudes Sci. \textbf{122} (2015), 1--64.
  \href{https://doi.org/10.1007/s10240-014-0067-4}{\path{doi:10.1007/s10240-014-0067-4}}

\bibitem[BBFS19]{BBFS}
M.~Bestvina, K.~Bromberg, K.~Fujiwara and A.~Sisto, \emph{Acylindrical actions
  on projection complexes}, Enseign. Math. \textbf{65} (2019), 1--32.
  \href{https://doi.org/10.4171/lem/65-1/2-1}{\path{doi:10.4171/lem/65-1/2-1}}

\bibitem[BE78]{BE1978}
R.~Bieri and B.~Eckmann, \emph{Relative homology and {P}oincar{\'e} duality for
  group pairs}, J. Pure Appl. Algebra \textbf{13} (1978), 277--319.
  \href{https://doi.org/10.1016/0022-4049(78)90012-9}{\path{doi:10.1016/0022-4049(78)90012-9}}

\bibitem[BH99]{BH13}
M.~Bridson and A.~Haefliger, ``Metric Spaces of Non-Positive Curvature'',
  Grundlehren der Mathematischen Wissenschaften [Fundamental Principles of
  Mathematical Sciences], vol. 319, Springer-Verlag, Berlin, 1999.
  \href{https://doi.org/10.1007/978-3-662-12494-9}{\path{doi:10.1007/978-3-662-12494-9}}

\bibitem[Bow12]{Bow:rel_hyp_PUB}
B.~H. Bowditch, \emph{Relatively hyperbolic groups}, Internat. J. Algebra
  Comput. \textbf{22} (2012), paper no. 1250016, 63 pp.
  \href{https://doi.org/10.1142/S0218196712500166}{\path{doi:10.1142/S0218196712500166}}

\bibitem[DGO17]{DGO17}
F.~Dahmani, V.~Guirardel and D.~Osin, ``Hyperbolically embedded subgroups and
  rotating families in groups acting on hyperbolic spaces'', Memoirs of the
  American Mathematical Society, vol. 245, American Mathematical Society,
  Providence, RI, 2017.
  \href{https://doi.org/10.1090/memo/1156}{\path{doi:10.1090/memo/1156}}

\bibitem[Ele98]{Elek98}
G.~Elek, \emph{Coarse cohomology and {$l_p$}-cohomology}, $K$-Theory
  \textbf{13} (1998), 1--22.
  \href{https://doi.org/10.1023/A:1007735219555}{\path{doi:10.1023/A:1007735219555}}

\bibitem[Fra18]{Fra2018}
F.~Franceschini, \emph{A characterization of relatively hyperbolic groups via
  bounded cohomology}, Groups Geom. Dyn. \textbf{12} (2018), 919--960.
  \href{https://doi.org/10.4171/GGD/463}{\path{doi:10.4171/GGD/463}}

\bibitem[Ger96]{Ger96}
S.~M. Gersten, \emph{A cohomological characterization of hyperbolic groups},
  1996.

\bibitem[GH09]{GH2009}
F.~Gautero and M.~Heusener, \emph{Cohomological characterization of relative
  hyperbolicity and combination theorem}, Publ. Mat. \textbf{53} (2009),
  489--514.
  \href{https://doi.org/10.5565/publmat_53209_10}{\path{doi:10.5565/publmat_53209_10}}

\bibitem[GM08]{GM2008}
D.~Groves and J.~F. Manning, \emph{Dehn filling in relatively hyperbolic
  groups}, Israel J. Math. \textbf{168} (2008), 317--429.
  \href{https://doi.org/10.1007/s11856-008-1070-6}{\path{doi:10.1007/s11856-008-1070-6}}

\bibitem[KK21]{KK21}
D.~Kielak and P.~Kropholler, \emph{Isoperimetric inequalities for
  {P}oincar\'{e} duality groups}, Proc. Amer. Math. Soc. \textbf{149} (2021),
  4685--4698.
  \href{https://doi.org/10.1090/proc/15596}{\path{doi:10.1090/proc/15596}}

\bibitem[Mil25]{Mil2021}
F.~Milizia, \emph{$\ell^\infty$-cohomology: amenability, relative
  hyperbolicity, isoperimetric inequalities and undecidability}, J. Topol.
  Anal. (2025).
  \href{https://doi.org/10.1142/s1793525325500268}{\path{doi:10.1142/s1793525325500268}}

\bibitem[Min02]{Min02}
I.~Mineyev, \emph{Bounded cohomology characterizes hyperbolic groups}, Q. J.
  Math. \textbf{53} (2002), 59--73.
  \href{https://doi.org/10.1093/qjmath/53.1.59}{\path{doi:10.1093/qjmath/53.1.59}}

\bibitem[MS13]{MS:disk}
H.~Masur and S.~Schleimer, \emph{The geometry of the disk complex}, J. Amer.
  Math. Soc. \textbf{26} (2013), 1--62.
  \href{https://doi.org/10.1090/S0894-0347-2012-00742-5}{\path{doi:10.1090/S0894-0347-2012-00742-5}}

\bibitem[MY]{MY}
I.~Mineyev and A.~Yaman, \emph{Relative hyperbolicity and bounded cohomology}.

\bibitem[Osi18]{Osin18}
D.~V. Osin, \emph{Groups acting acylindrically on hyperbolic spaces}, in:
  ``Proceedings of the {I}nternational {C}ongress of {M}athematicians'' ({R}io
  de {J}aneiro, 2018), B.~Sirakov, P.~N. de~Souza and M.~Viana (eds.), vol. 2,
  Invited lectures, World Sci. Publ., Singapore, 2018, pp.~919--939.
  \href{https://doi.org/10.1142/9789813272880_0082}{\path{doi:10.1142/9789813272880_0082}}

\bibitem[Sis12]{metric_rel_hyp}
A.~Sisto, \emph{On metric relative hyperbolicity}, 2012.
  \href{https://doi.org/10.48550/arXiv.1210.8081}{\path{doi:10.48550/arXiv.1210.8081}}

\end{thebibliography}

\end{document}